\title{Strengthened upper bound on the third eigenvalue of graphs}
\author{Sida Li\thanks{Department of Pure Mathematics and Mathematical Statistics
(DPMMS), University of Cambridge, United Kingdom. Email: \href{mailto:sl2190@cam.ac.uk}{sl2190@cam.ac.uk}.}}
\date{}
\newcommand{\purp}[1]{\textcolor{purple}{#1}}
\newtheorem{thm}{Theorem}[section]
\newtheorem{dfn}[thm]{Definition}
\newtheorem{clm}[thm]{Claim}
\newtheorem{cor}[thm]{Corollary}
\newtheorem{cnj}[thm]{Conjecture}
\begin{document}
\maketitle

\begin{abstract}
    Let $G$ be a graph on $n \ge 3$ vertices, whose adjacency matrix has eigenvalues $\lambda_1 \ge \lambda_2 \ge \dots \ge \lambda_n$. The problem of bounding $\lambda_k$ in terms of $n$ was first proposed by Hong and was studied by Nikiforov, who demonstrated strong upper and lower bounds for arbitrary $k$. Nikiforov also claimed a strengthened upper bound for $k \ge 3$, namely that $\frac{\lambda_k}{n} < \frac{1}{2\sqrt{k-1}} - \varepsilon_k$ for some positive $\varepsilon_k$, but omitted the proof due to its length. In this paper, we give a proof of this bound for $k = 3$. We achieve this by instead looking at $\lambda_{n-1} + \lambda_n$ and introducing a new graph operation which provides structure to minimising graphs, including $\omega \le 3$ and $\chi \le 4$. Then we reduce the hypothetical worst case to a graph that is $n/2$-regular and invariant under said operation. By considering a series of inequalities on the restricted eigenvector components, we prove that a sequence of graphs with $\frac{\lambda_{n-1} + \lambda_n}{n}$ converging to $-\frac{\sqrt{2}}{2}$ cannot exist. 
\end{abstract}

\section{Introduction}

For a graph $G$ on $n \ge 3$ vertices, let the eigenvalues of the adjacency matrix of $G$ be $\lambda_1 \ge \lambda_2 \ge \dots \ge \lambda_n$. Denote the complement of $G$ by $\overline{G}$, with eigenvalues $\overline{\lambda_1} \ge \dots \ge \overline{\lambda_n}$. 

The problem of bounding the $k$-th eigenvalue of a graph $G$ of order $n$ was first proposed by Hong \cite{hong1993bounds}. In accordance with Nikiforov \cite{nikiforov2015extrema}, let $c_k = \sup\{\lambda_k(G)/n : |V(G)| = n \ge k\}$ and $c_{-k} = \sup\{|\lambda_{n-k+1}(G)|/n : |V(G)| = n \ge k\}$. Nikiforov proved that for all $k \ge 2$,
\[c_k \le \frac{1}{2\sqrt{k-1}}.\]
A natural lower bound is provided by the construction $kK_{\frac{n}{k}}$ which gives $\lambda_k = \frac{n}{k} - 1$, hence $c_k \ge \frac{1}{k}$. For $k = 1, 2$, we indeed have $c_1 = 1$ and $c_2 = \frac12$, but Nikiforov showed that for $k \ge 5$, we have an improvement $c_k \ge \frac{1}{k - 1/2}$. Recently Linz \cite{linz2023improved} demonstrated a construction for $c_4 \ge \frac{1+\sqrt{5}}{12} \approx 0.26967 > \frac14$. 

Nikiforov claimed that for each $k \ge 3$, there exists $\varepsilon_k > 0$ such that
\[c_k < \frac{1}{2\sqrt{k-1}} - \varepsilon_k\]
though the proof was never published. In this paper, we prove the claim for $k = 3$. 

\begin{thm}\label{thm:worst}
    There exists $\varepsilon_3 > 0$ such that
    \[c_3 < \frac{1}{2\sqrt{2}} - \varepsilon_3.\]
\end{thm}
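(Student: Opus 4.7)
The plan is to proceed by contradiction. Suppose no positive $\varepsilon_3$ works, so there is a sequence of graphs on $N \to \infty$ vertices with $\lambda_3/N \to 1/(2\sqrt 2)$. My first move is to translate the hypothesis into a statement about the bottom of the spectrum of the complement. Using Nikiforov's bound $\lambda_2 \le N/2$ together with a stability analysis of his proof for $k=3$, I aim to establish a clustering lemma: any such extremal sequence must also satisfy $\lambda_2/N \to 1/(2\sqrt 2)$, since otherwise the weighted-Lagrangian argument underlying $c_3 \le 1/(2\sqrt 2)$ would give a strict improvement on $\lambda_3$. After passing to a subsequence for which the graph is $N/2$-regular (a reduction I will justify via the graph operation described below), the identity $\lambda_i(G) + \lambda_{N-i+2}(\overline G) = -1$ for $i \ge 2$ in regular graphs yields
\[ \frac{\lambda_{N-1}(\overline G) + \lambda_N(\overline G)}{N} \;\longrightarrow\; -\frac{\sqrt 2}{2}, \]
so it suffices to rule out any graph sequence with this convergence for the sum of the two smallest eigenvalues.

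Next, I would introduce a graph operation --- some form of edge/non-edge switching --- designed so that it cannot increase $\lambda_{N-1}+\lambda_N$, and verify that at a fixed point the graph is forced to be $N/2$-regular with $\omega \le 3$ and $\chi \le 4$. Iteration then lets me assume the extremal sequence consists of $N/2$-regular, $K_4$-free, $4$-colourable graphs; in particular each vertex set admits a partition into at most four independent sets, which will be the scaffolding for the final step.

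The last step is analytic. Take orthonormal eigenvectors $u,v$ for $\lambda_{N-1}, \lambda_N$, so that $\lambda_{N-1}+\lambda_N = u^\top A u + v^\top A v$. Decompose $u$ and $v$ along the $4$-colouring and encode their class-masses and cross-correlations as a finite list of real parameters; $N/2$-regularity gives linear constraints on these parameters, while $K_4$-freeness and the independence within each colour class supply nonlinear inequalities that preclude over-concentration. A careful bookkeeping should produce a strict gap $\lambda_{N-1}+\lambda_N \ge -N\sqrt 2/2 + \delta N$ for some absolute $\delta>0$, contradicting the limit.

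I expect this final step to be the main obstacle. The value $-\sqrt 2/2$ is exactly the Cauchy--Schwarz-type extremal for the two-smallest Rayleigh sum of a $(0,1)$-matrix with row sum $N/2$ (since $(\lambda_{N-1}+\lambda_N)^2 \le 2(\lambda_{N-1}^2+\lambda_N^2) \le 2\sum_{i\ge 2}\lambda_i^2 = N^2/2$), so the slack $\delta$ has to be extracted entirely from the combinatorial structure rather than from linear algebra alone. Ensuring that each structural inequality is quantitative, and that they combine without any single slack degenerating in the limit, is where the bulk of the work will lie; the clustering lemma in Step~1 is a secondary but nontrivial difficulty, as Nikiforov's original proof is not stated with equality conditions.
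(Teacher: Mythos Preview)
Your broad strategy---reduce to the sum $\lambda_{N-1}+\lambda_N$ on the complement, introduce an edge-switching operation that does not increase this sum, then extract a quantitative gap from the resulting structure---is indeed the paper's strategy. However, two of your three steps contain genuine gaps.

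\textbf{Step 1 is an unnecessary detour, and Step 2 contains a false claim.} The paper bypasses your clustering lemma and the regular-graph complement identity entirely: Weyl's inequality gives $\lambda_3(G)+\lambda_{N-1}(\overline G)\le\lambda_2(K_N)=-1$ for \emph{every} graph $G$, so $c_3\le c_{-2}$ with no regularity needed. More seriously, your assertion that fixed points of the operation are forced to be $N/2$-regular is not true. The paper's operation $^*$ (set $i\sim j$ iff $x_ix_j+y_iy_j<0$ for the bottom two eigenvectors) yields $\omega\le 3$ and $\chi\le 4$ at a fixed point, but regularity is \emph{not} a consequence of invariance. The near-$N/2$-regularity instead comes from the extremality hypothesis itself: from $\lambda_{N-1}+\lambda_N=-\frac{\sqrt2}{2}N+o(N)$ one deduces $e(G)=\frac{N^2}{4}+o(N^2)$ and $\lambda_1=\frac{2e}{N}+o(N)$, and then Nikiforov's irregularity bound forces $\sum_i|d(i)-N/2|=o(N^2)$. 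The paper then makes $o(N^2)$ edge modifications to reach an exactly $N/2$-regular graph whose spectrum differs by $o(N)$.

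\textbf{Step 3 is too coarse to succeed as written.} Decomposing the eigenvectors along a $4$-colouring gives only $O(1)$ real parameters, but the equality configuration you are trying to exclude is not visible at that resolution: as you correctly note, the value $-\sqrt2/2$ is exactly the Cauchy--Schwarz extremum, so the slack must come from fine structure. The paper exploits something much stronger than $\chi\le 4$: invariance under $^*$ forces a \emph{circular run} structure (the neighbourhood of vertex $i$ is a cyclic interval $\{a_i,\dots,b_i\}$ with $a_i,b_i$ monotone), and for $N/2$-regular invariant graphs this yields the block form $A=\begin{pmatrix}Q&J-Q\\J-Q&Q\end{pmatrix}$ with the complement of $Q$ in perfect elimination order. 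The bottom eigenvectors of $2Q-J$ then fall into two monotone ``Types'', and the contradiction is obtained from a delicate system of inequalities (Chebyshev-sum, smoothing, extrema) on the eigenvector entries along this ordering---not from class-mass parameters of a colouring. Without the run structure, it is unclear how your bookkeeping would produce any $\delta>0$.
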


In comparison to Nikiforov's proof outline, our proof doesn't rely on the Removal Lemma of Alon et al. \cite{alon2000testing} nor analytic graph theory. Nonetheless, the final step is motivated by considering a sequence of worst-case graphs and observing averages that do converge, since graph convergence is not guaranteed. We also provide possible alternative proof ideas that delve into the theory of graphons. 

Instead, we shift our focus to the related but untouched problem of minimising $\lambda_{n-1} + \lambda_n$ over all graphs of order $n$. This is motivated by observing the inequalities used to derive the original $c_3 \le \frac{1}{2\sqrt{2}}$ bound by Nikiforov \cite{nikiforov2015extrema} and determining conditions for which they are tight. Indeed, we first have $\lambda_1 = \frac{2e(G)}{n}$ which corresponds to regular $G$, and in the last step we require $\lambda_2 = \lambda_3$, which Leonida and Li \cite{leonida2024structure} also provided strong evidence for. In this case, minimal $\overline{\lambda_{n-1}} + \overline{\lambda_n}$ is indeed equivalent to maximal $\lambda_3$. 

We prove the following theorem, which strengthens the current bound of $\lambda_{n-1} + \lambda_n \ge -\frac{\sqrt{2}}{2}n$, given by AM-QM on $\lambda_{n-1}^2 + \lambda_n^2 \le \frac{n^2}{4}$. The infimum exists by Nikiforov \cite{nikiforov2006combinations}. 

\begin{thm}\label{thm:worst-sum}
    There exists $\varepsilon > 0$ such that
    \[\inf\left\{\frac{\lambda_{n-1} + \lambda_n}{n} : |V(G)| = n \ge 3\right\} > -\frac{\sqrt{2}}{2} + \varepsilon.\]
\end{thm}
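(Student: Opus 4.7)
My plan is to argue by contradiction: suppose there exists a sequence of graphs $G_n$ with $|V(G_n)| = n \to \infty$ and $(\lambda_{n-1}(G_n) + \lambda_n(G_n))/n \to -\sqrt{2}/2$. The existing bound comes from AM-QM applied to $\lambda_{n-1}^2 + \lambda_n^2 \le n^2/4$, and equality in this chain requires simultaneously $\lambda_{n-1}(G_n)/n, \lambda_n(G_n)/n \to -1/(2\sqrt{2})$ and $\lambda_{n-1}^2 + \lambda_n^2 \to n^2/4$. Via $\mathrm{tr}(A^2) = 2e(G_n) = \sum \lambda_i^2$, this squeezes the edge count towards $n^2/8$ and forces the remaining $n-2$ eigenvalues to collectively carry $o(n^2)$ of spectral energy; in particular, $\lambda_1(G_n) \to n/2$ and $G_n$ must be approximately $n/2$-regular.

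The next step is to introduce a local graph operation that weakly decreases $\lambda_{n-1} + \lambda_n$ and whose fixed points admit useful structure. A natural candidate acts on a pair of vertices $u,v$ by redistributing the edges in the symmetric difference $N(u) \triangle N(v)$ in a way that aligns with the sign pattern of the restricted eigenvectors for $\lambda_{n-1}$ and $\lambda_n$ at $u, v$; the redistribution should be designed so that the Rayleigh quotient of each of the two eigenvectors does not increase, hence neither does $\lambda_{n-1} + \lambda_n$ by the variational characterisation. I would then argue that operation-invariant graphs cannot contain a $K_4$ or force a proper five-chromatic subconfiguration, yielding the advertised $\omega \le 3$ and $\chi \le 4$. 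Coupled with the near-regularity from the previous step, this lets me replace the sequence $G_n$ by an operation-invariant, $n/2$-regular sequence with the same asymptotic behaviour.

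With this structural reduction in hand, I would partition the vertex set according to the joint sign pattern of the two eigenvectors $x, y$ for $\lambda_{n-1}, \lambda_n$ into at most four classes, indexed by $(\mathrm{sgn}(x_v), \mathrm{sgn}(y_v))$. The $\chi \le 4$ constraint pushes each class toward independence, $n/2$-regularity pins down bipartite edge densities between classes in terms of the class sums $\sum_{v \in S} x_v, \sum_{v \in S} y_v$, and the eigenvalue equations $Ax = \lambda_{n-1} x$, $Ay = \lambda_n y$ furnish a system of inequalities on these class sums together with their squared analogues. Substituting $\lambda_{n-1} + \lambda_n \approx -n/\sqrt{2}$ into this system, I would aim to show the inequalities are jointly infeasible in the limit, producing the required contradiction.

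I expect the hardest step to be the construction of the graph operation. It must simultaneously be non-increasing on the sum of two eigenvalues (subtler than for a single eigenvalue, since Rayleigh-quotient manipulations naturally handle only one eigenvector at a time), terminate in finitely many steps, and leave a genuinely exploitable structural signature at its fixed points. A secondary difficulty, flagged in the abstract, is that the sequence $G_n$ need not converge in any graphon sense, so the final contradiction has to be extracted from averaged quantities rather than a limit object; this will likely require careful tracking of $o(1)$ error terms throughout the eigenvector-inequality argument, and in particular a quantitative rather than merely asymptotic version of the AM-QM tightness analysis from the first step.
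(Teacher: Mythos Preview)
Your high-level strategy matches the paper's: contradiction via a near-extremal sequence, an operation weakly decreasing $\lambda_{n-1}+\lambda_n$ whose fixed points have $\omega\le 3$ and $\chi\le 4$, reduction to the $n/2$-regular invariant case, and a contradiction from eigenvector inequalities. The AM-QM/near-regularity analysis you sketch is exactly how the paper handles the reduction in Section~\ref{section:proof}.

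However, two concrete gaps would prevent your plan from closing.

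\textbf{The operation.} Your proposed operation is local (redistributing edges in $N(u)\triangle N(v)$ for a pair $u,v$), and you flag its design as the hardest step. In fact the paper's operation is \emph{global} and essentially one line: set $i\sim j$ in $G^*$ iff $x_ix_j+y_iy_j<0$, where $x,y$ are orthonormal eigenvectors for $\lambda_{n-1},\lambda_n$. Monotonicity of $\lambda_{n-1}+\lambda_n$ follows immediately from the variational characterisation applied to the pair $(x,y)$. So this step is not the bottleneck; the real difficulty lies elsewhere.

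\textbf{The four-class partition is too coarse.} Your endgame is to partition by $(\mathrm{sgn}(x_v),\mathrm{sgn}(y_v))$ and derive a contradiction from class sums and bipartite densities. This is precisely the partition the paper uses to prove $\chi\le 4$, but it is not fine enough to finish. The paper instead exploits that for $G=G^*$ each vertex is a vector $v_i=(x_i,y_i)\in\mathbb{R}^2$ with adjacency $\iff$ angle $>\pi/2$; ordering the $v_i$ anticlockwise gives a \emph{circular} structure in which neighbourhoods are consecutive runs, degrees are monotone within the front/back, and the restricted eigenvectors are componentwise monotone of two distinct ``Types''. These monotonicities are what power the Chebyshev-sum and smoothing inequalities that carve out the feasible region for $(\nu_1,\nu_2)$. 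Crucially, the paper remarks after Theorem~\ref{thm:n/2-regular} that the Type~1 and Type~2 equality configurations are \emph{each separately achievable} as matrices; only their simultaneous incompatibility (Claims~\ref{clm:degree}--\ref{clm:small-box}) yields the contradiction. A four-class quotient with aggregate sums cannot see this tension, so your system of inequalities would remain feasible at the boundary.

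In short: the operation is easy and global, not hard and local; the hard part is the fine angular/monotonicity structure it produces, which your four-class scheme discards.
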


Theorem \ref{thm:worst-sum} and $\lambda_{n-1} \ge \lambda_n$ immediately give:

\begin{cor}\label{cor:worst-2}
    There exists $\varepsilon_3 > 0$ such that
    \[c_{-2} < \frac{1}{2\sqrt{2}} - \varepsilon_3.\]
\end{cor}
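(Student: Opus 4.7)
The plan is simply to unpack the word ``immediately'' that precedes the corollary. Since $\lambda_{n-1} \ge \lambda_n$ by definition, one has $2\lambda_{n-1} \ge \lambda_{n-1} + \lambda_n$, so Theorem~\ref{thm:worst-sum} furnishes some $\varepsilon > 0$ with
\[\lambda_{n-1}(G) > \left(-\tfrac{1}{2\sqrt{2}} + \tfrac{\varepsilon}{2}\right) n\]
for every graph $G$ on $n \ge 3$ vertices. In the case $\lambda_{n-1}(G) \le 0$, which is the only one in which $|\lambda_{n-1}|$ can compete with the target $\tfrac{n}{2\sqrt{2}}$ on the negative side, this reads $|\lambda_{n-1}(G)|/n < \tfrac{1}{2\sqrt{2}} - \tfrac{\varepsilon}{2}$, and we would be done by setting $\varepsilon_3 = \varepsilon/2$.

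The only loose end is the possibility that $\lambda_{n-1}(G) > 0$, in which case $|\lambda_{n-1}|$ equals $\lambda_{n-1}$ rather than its negative. I would handle this via the trivial position inequality $\lambda_{n-1}(G) \le \lambda_3(G)$, valid for $n \ge 4$, combined with Theorem~\ref{thm:worst}, which yields a constant $\varepsilon_3^{\ast} > 0$ with $\lambda_3(G)/n < \tfrac{1}{2\sqrt{2}} - \varepsilon_3^{\ast}$. For $n = 3$, a finite enumeration of the four graphs on three vertices shows $|\lambda_2|/3 \le 1/3 < 1/(2\sqrt{2})$, with a positive margin of $\tfrac{1}{2\sqrt 2} - \tfrac{1}{3}$.

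Setting $\varepsilon_3$ equal to the minimum of $\varepsilon/2$, $\varepsilon_3^{\ast}$, and $\tfrac{1}{2\sqrt{2}} - \tfrac{1}{3}$ then yields the corollary. There is essentially no obstacle here: the substantive content is all in Theorem~\ref{thm:worst-sum}, and this deduction is a one-line manipulation of inequalities plus a bookkeeping step to merge constants. The only thing to be careful about is that the symbol $\varepsilon_3$ in this corollary need not literally be the same as the one in Theorem~\ref{thm:worst}; taking a minimum of the two absorbs this ambiguity.
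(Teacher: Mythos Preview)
Your deduction from Theorem~\ref{thm:worst-sum} for the case $\lambda_{n-1}\le 0$ is exactly what the paper intends by ``immediately,'' and is correct.

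The genuine problem is your handling of the case $\lambda_{n-1}>0$: you invoke Theorem~\ref{thm:worst}, but in this paper the logical flow is Theorem~\ref{thm:worst-sum} $\Rightarrow$ Corollary~\ref{cor:worst-2} $\Rightarrow$ Theorem~\ref{thm:worst} (the latter via Weyl's inequality, as stated right after the corollary). So appealing to Theorem~\ref{thm:worst} here is circular. Your closing remark about reconciling the two $\varepsilon_3$'s does not address this; it is not a naming clash but a dependency loop.

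The fix is elementary and does not need Theorem~\ref{thm:worst} at all. If $\lambda_{n-1}>0$ then $\lambda_1\ge\cdots\ge\lambda_{n-1}>0$, and $\operatorname{tr}A=0$ forces $|\lambda_n|\ge (n-1)\lambda_{n-1}$. Since $\lambda_n^2\le\sum_i\lambda_i^2=2e(G)<n^2$, we get $\lambda_{n-1}/n<1/(n-1)\le 1/3$ for $n\ge 4$, which already beats $1/(2\sqrt{2})$ with the fixed margin $\tfrac{1}{2\sqrt{2}}-\tfrac{1}{3}$. Together with your $n=3$ enumeration, this disposes of the positive case without circularity; then take $\varepsilon_3=\min\{\varepsilon/2,\ \tfrac{1}{2\sqrt{2}}-\tfrac{1}{3}\}$. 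The paper glosses over this entirely (the positive case is indeed negligible), so once you replace the appeal to Theorem~\ref{thm:worst} by this trace argument, your write-up is more careful than the paper's one-liner while following the same route.
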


Then Theorem \ref{thm:worst} follows immediately from Weyl's inequalities, which gives $\lambda_3 + \overline{\lambda_{n-1}} \le \lambda_2(K_n) = -1$. \\

The structure of the rest of the paper, dedicated to proving Theorem \ref{thm:worst-sum}, is as follows. In Section \ref{section:structure}, we first introduce a new graph operation which restricts the structure of graphs minimising $\lambda_{n-1} + \lambda_n$, forcing them to look not dissimilar to $Pi_n$ as introduced by Leonida and Li \cite{leonida2024structure}. For example, we prove the following, where $\omega$ and $\chi$ denote the clique and chromatic numbers respectively. 

\begin{thm}\label{thm:worst-structure-intro}
    For every $n \ge 3$, there exists $G$ of order $n$ with minimal $\lambda_{n-1} + \lambda_n$ such that $\omega(G) \le 3$ and $\chi(G) \le 4$. 
\end{thm}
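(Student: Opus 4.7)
The plan is to exploit the Ky Fan minimum principle
\[
\lambda_{n-1}(G) + \lambda_n(G) \;=\; \min_{\substack{x,y \in \mathbb{R}^n \\ \text{orthonormal}}} \bigl(x^\top A(G)\, x + y^\top A(G)\, y\bigr) \;=\; \min_{x,y}\, 2\sum_{uv \in E(G)} \langle \mathbf{w}_u, \mathbf{w}_v\rangle,
\]
where $\mathbf{w}_v := (x_v, y_v) \in \mathbb{R}^2$ is a vertex \emph{signature}. This rewrites the quantity as an edge-sum of pairwise inner products of $2$-dimensional vectors, one attached to each vertex. The key observation is that for any fixed orthonormal pair $x, y$, deleting an edge $uv$ changes the objective by $-2\langle \mathbf{w}_u, \mathbf{w}_v\rangle$; hence any edge whose endpoints have non-negative signature inner product can be deleted without increasing $\lambda_{n-1} + \lambda_n$.

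Next, we would choose $G^*$ to be a minimizer of $\lambda_{n-1} + \lambda_n$ on $n$ vertices with the fewest edges, and fix orthonormal eigenvectors $x, y$ realizing this minimum. The edge-minimality of $G^*$ forces $\langle \mathbf{w}_u, \mathbf{w}_v\rangle < 0$ strictly for every edge $uv$, since otherwise the deleted graph would be a minimizer with strictly fewer edges. The same argument shows that every non-isolated vertex has a nonzero signature, since deleting any edge incident to a vertex with zero signature would leave the objective unchanged.

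From this point both bounds follow from $2$-dimensional geometry. For $\omega(G^*) \le 3$, four pairwise adjacent vertices would supply four nonzero vectors in $\mathbb{R}^2$ with all six pairwise inner products strictly negative, so in cyclic angular order the four consecutive gaps would each exceed $90^\circ$; but they sum to $360^\circ$, a contradiction. For $\chi(G^*) \le 4$, color each non-isolated vertex by the closed quadrant of $\mathbb{R}^2$ containing $\mathbf{w}_v$ (after a fixed placement of the four half-axes into a single quadrant each), and give isolated vertices any color. Two vertices in the same closed quadrant have componentwise same-sign signatures and hence non-negative inner product, so they cannot be adjacent; the coloring is proper with $4$ colors.

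The one delicate point is the degenerate case $\lambda_{n-1}(G^*) = \lambda_n(G^*)$, in which the orthonormal pair $(x, y)$ is determined only up to an orthogonal rotation of the bottom $2$-dimensional eigenspace. Both structural arguments depend only on pairwise angles of signatures (for the clique bound) or on a choice of orthogonal planar frame (for the coloring), so any admissible basis works and the degeneracy is harmless. The main expected obstacle in the full program of the paper is upgrading from this bare edge-deletion argument to a graph operation with the richer invariance and $n/2$-regularity properties needed to prove Theorem~\ref{thm:worst-sum}; for the present existence statement, however, edge deletion combined with the signature viewpoint is enough.
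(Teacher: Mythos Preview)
Your proposal is correct and follows essentially the same approach as the paper: select a minimizer with the fewest edges, use the Ky Fan variational principle with an orthonormal eigenvector pair $(x,y)$ to attach planar signatures $\mathbf{w}_v=(x_v,y_v)$, deduce that every edge has strictly negative signature inner product, and then read off $\omega\le 3$ from the impossibility of four pairwise-obtuse nonzero vectors in $\mathbb{R}^2$ and $\chi\le 4$ from the quadrant coloring. The paper packages the deletion step as a single graph operation $G\mapsto G^*$ (adjacency $\Leftrightarrow$ negative inner product) and records invariance $G=G^*$, which it reuses later for the regular structure theorems; your one-edge-at-a-time deletion is the same idea stripped to what is needed for this statement.
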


This operation was inspired by the analogous operation for $\lambda_1 + \lambda_2$, proposed by Ebrahimi et al. \cite{ebrahimi2008tau2}. We then briefly describe a generalisation, though not involved in the proof. 

Then in Section \ref{section:n/2-regular} we further impose the graph to be $n/2$-regular and prove the theorem for this family of constrained invariant graphs. We do this by deriving a series of inequalities from eigenvector components, where we consider two different patterns of eigenvectors that each impose distinct structural restrictions on the graph. Then, given a sequence of graphs with $\frac{\lambda_{n-1} + \lambda_n}{n}$ converging to $-\frac{\sqrt{2}}{2}$, we take local averages of degrees and eigenvector components to prove that these simultaneous restrictions yield a contradiction.

We conclude in Section \ref{section:proof} by reducing the hypothetical `worst-case' graph to this family by showing that we can obtain the same required structure with $o(n^2)$ edge additions/removals, which doesn't affect the overall growth of the spectrum.

\section{Structure of minimising graphs}\label{section:structure}

We introduce a new operation similar to the one used to bound $\lambda_1 + \lambda_2$ by Ebrahimi et al. \cite{ebrahimi2008tau2}. Since we are focussing on the last two eigenvalues, intuitively this operation ought to increase the third eigenvalue proportion of $\overline{G}$. 

\begin{dfn}\label{dfn:operation}
    Define \purp{Operation $^*$} on a graph $G$ as follows. Let $G^*$ be any graph such that $V(G^*) = V(G)$ and there exists an orthonormal pair of eigenvectors $\{x, y\}$ corresponding to the eigenvalues $\{\lambda_{n-1}(G), \lambda_n(G)\}$ of $G$ such that in $G^*$, we have $i \sim j \iff x_ix_j + y_iy_j < 0$. 
\end{dfn}

The motivation for this operation is as follows.

\begin{thm}\label{thm:operation-monovar}
    For any $G^*$, we have $\lambda_{n-1}(G^*) + \lambda_n(G^*) \le \lambda_{n-1}(G) + \lambda_n(G)$. 
\end{thm}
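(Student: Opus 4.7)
The plan is to use the variational characterisation of sums of the smallest eigenvalues (a Ky Fan type minimum principle), then compare the two relevant quadratic forms edge-by-edge.

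First I would recall that for any symmetric matrix $M$ of order $n$,
\[
\lambda_{n-1}(M) + \lambda_n(M) \;=\; \min_{W}\, \operatorname{tr}\!\bigl(P_W M P_W\bigr)
\;=\; \min_{\{u,v\}\,\text{o.n.}}\bigl(u^{\!\top} M u + v^{\!\top} M v\bigr),
\]
where the minimum is taken over all $2$-dimensional subspaces $W\subseteq\mathbb{R}^n$ (equivalently, over all orthonormal pairs $\{u,v\}$). Applying this to the adjacency matrix $A^*$ of $G^*$ with the particular orthonormal pair $\{x,y\}$ coming from Definition \ref{dfn:operation}, I obtain the immediate upper bound
\[
\lambda_{n-1}(G^*) + \lambda_n(G^*) \;\le\; x^{\!\top} A^* x + y^{\!\top} A^* y.
\]
On the other hand, since $\{x,y\}$ are orthonormal eigenvectors of $A$ for the eigenvalues $\lambda_{n-1}(G), \lambda_n(G)$, we have the equality
\[
\lambda_{n-1}(G) + \lambda_n(G) \;=\; x^{\!\top} A x + y^{\!\top} A y.
\]

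Next I would expand both quadratic forms in the natural way. For any adjacency matrix $B$ of a graph $H$ on $V(G)$,
\[
x^{\!\top} B x + y^{\!\top} B y \;=\; \sum_{\{i,j\}\in E(H)} 2\bigl(x_i x_j + y_i y_j\bigr).
\]
Writing $w_{ij} := x_i x_j + y_i y_j$, the task reduces to showing
\[
\sum_{\{i,j\}\in E(G^*)} w_{ij} \;\le\; \sum_{\{i,j\}\in E(G)} w_{ij}.
\]
By construction, $E(G^*) = \{\{i,j\}\,:\, w_{ij} < 0\}$, so the left-hand sum consists of \emph{all} negative terms among the pairs $\{i,j\}$. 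This is exactly the minimum value of the expression $\sum_{\{i,j\}\in E(H)} w_{ij}$ over all graphs $H$ on $V(G)$: including any pair with $w_{ij}\ge 0$ can only increase the sum, and omitting any pair with $w_{ij}<0$ can only increase it as well. In particular, the choice $H = G$ yields a sum at least as large, giving the inequality above.

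Chaining the three displayed steps produces
\[
\lambda_{n-1}(G^*) + \lambda_n(G^*) \;\le\; x^{\!\top} A^* x + y^{\!\top} A^* y \;\le\; x^{\!\top} A x + y^{\!\top} A y \;=\; \lambda_{n-1}(G) + \lambda_n(G),
\]
which is the desired conclusion. I do not anticipate a serious obstacle here: the only subtle point is recognising that Operation~$^*$ is essentially designed to minimise the quadratic-form sum over $\{x,y\}$, so that applying the minimum principle in one direction and the edge-by-edge comparison in the other dovetails automatically. A minor bookkeeping care would be to note that self-loops (the diagonal) contribute nothing because $A^*$ and $A$ are zero-diagonal, and that the choice of $\{x,y\}$ in the definition is exactly the pair used in the argument, so no additional orthogonality verification is needed.
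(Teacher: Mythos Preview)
Your proposal is correct and follows essentially the same argument as the paper: apply the Ky Fan/variational characterisation of $\lambda_{n-1}+\lambda_n$ to $A(G^*)$ with the test pair $\{x,y\}$, then observe that $E(G^*)$ is precisely the set of pairs with negative weight $x_ix_j+y_iy_j$, so the resulting sum is at most the corresponding sum over $E(G)$, which equals $\lambda_{n-1}(G)+\lambda_n(G)$. The paper's proof is just a more compressed version of what you wrote.
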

\begin{proof}
    As $A(G), A(G^*)$ are Hermitian matrices, we have:
    \begin{align*}
        \lambda_{n-1}(G^*) + \lambda_n(G^*) &= \inf_{\substack{|z| = |w| = 1\\z \perp w}} \langle z, A(G^*)z\rangle + \langle w, A(G^*)w \rangle \\
        &\le \langle x, A(G^*)x\rangle + \langle y, A(G^*)y \rangle \\
        &= 2 \cdot \sum_{x_ix_j + y_iy_j < 0} (x_ix_j + y_iy_j) \\
        &\le 2 \cdot \sum_{ij \in E(G)} (x_ix_j + y_iy_j) \\
        &= \langle x, A(G)x\rangle + \langle y, A(G)y \rangle = \lambda_{n-1}(G) + \lambda_n(G). \qedhere
    \end{align*}
\end{proof}

\begin{cor}\label{cor:worst-invariant}
    For every $n \ge 3$, there exists $G$ of order $n$ with minimal $\lambda_{n-1} + \lambda_n$ such that $G = G^*$. 
\end{cor}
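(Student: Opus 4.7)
The plan is to start from any graph $G_0$ achieving the minimum of $\lambda_{n-1} + \lambda_n$ among order-$n$ graphs (which exists by finiteness) and apply Operation $^*$ once to produce $G := G_0^*$. By Theorem \ref{thm:operation-monovar} we have $\lambda_{n-1}(G) + \lambda_n(G) \le \lambda_{n-1}(G_0) + \lambda_n(G_0)$, so minimality of $G_0$ forces equality and $G$ is itself a minimiser. The remaining task is to exhibit an orthonormal eigenvector pair $\{x', y'\}$ of $A(G)$ corresponding to $\{\lambda_{n-1}(G), \lambda_n(G)\}$ for which the rule $i \sim j \iff x'_i x'_j + y'_i y'_j < 0$ reproduces $G$, which will certify $G = G^*$.

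The first key input is the equality analysis of the proof of Theorem \ref{thm:operation-monovar}, applied to the eigenvector pair $\{x, y\}$ of $G_0$ used to construct $G$. Equality in
\[
\langle x, A(G)x\rangle + \langle y, A(G)y\rangle \ge \lambda_{n-1}(G) + \lambda_n(G)
\]
is a Poincar\'e/Cauchy interlacing statement: since the trace of $A(G)$ restricted to $V := \mathrm{span}(x, y)$ attains the minimum possible over $2$-dimensional subspaces, $V$ must be $A(G)$-invariant with induced spectrum $\{\lambda_{n-1}(G), \lambda_n(G)\}$ (counted with multiplicity).

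The second ingredient is the basis invariance of orthogonal projection. Choose $\{x', y'\}$ to be an orthonormal basis of $V$ consisting of eigenvectors of $A(G)$; this exists because $V$ is $A(G)$-invariant with the correct spectrum. Then $x'(x')^\top + y'(y')^\top = P_V = xx^\top + yy^\top$, giving
\[
x'_i x'_j + y'_i y'_j = (P_V)_{ij} = x_i x_j + y_i y_j \qquad \text{for all } i, j.
\]
Hence applying the edge rule to $\{x', y'\}$ produces exactly the graph $\{ij : x_i x_j + y_i y_j < 0\} = G$, confirming $G = G^*$.

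The delicate step is the equality analysis: one must handle the cases $\lambda_{n-1}(G) = \lambda_{n-2}(G)$ (where the invariant subspace containing $V$ may sit inside a strictly larger eigenspace) and $\lambda_{n-1}(G) = \lambda_n(G)$ (where every direction in $V$ is automatically an eigenvector). In both degenerate situations $V$ still inherits the correct spectrum as an $A(G)$-invariant $2$-dimensional subspace, so the construction of $\{x', y'\}$ is unaffected and the argument goes through.
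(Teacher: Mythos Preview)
Your proof is correct and takes a genuinely different route from the paper's. The paper argues by a secondary minimisation: among all minimisers of $\lambda_{n-1}+\lambda_n$ it selects one with the fewest edges, observes from the equality case of the \emph{second} inequality in Theorem~\ref{thm:operation-monovar} that $E(G^*)\subseteq E(G)$ with any discrepancy confined to pairs where $x_ix_j+y_iy_j=0$, and then uses edge-minimality to conclude $G^*=G$. You instead analyse the equality case of the \emph{first} inequality (the Ky~Fan trace-minimisation), deduce that $V=\mathrm{span}(x,y)$ is $A(G)$-invariant with the correct two eigenvalues, and exploit the basis-independence of the orthogonal projector $P_V$ to certify $G=G^*$ directly.

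Your approach is a little longer but yields slightly more: it shows that for \emph{any} minimiser $G_0$, a single application of $^*$ already lands on a fixed point, without needing the edge-count tiebreaker. The paper's argument is shorter and avoids the spectral equality analysis entirely, at the cost of the extra minimisation step. Both are clean; the paper's is more combinatorial, yours more linear-algebraic. Your handling of the degenerate eigenvalue cases is correct (in each case $V$ sits inside a sum of eigenspaces and contains the full $\lambda_n$-eigenline when $\lambda_n<\lambda_{n-1}$, so it is invariant), though you could state the Ky~Fan equality characterisation once and for all rather than splitting into cases.
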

\begin{proof}
    Since there are finitely many graphs of order $n$, pick $G$ with minimal $\lambda_{n-1} + \lambda_n$ and furthermore with minimal number of edges, then we must have $\lambda_{n-1}(G^*) + \lambda_n(G^*) = \lambda_{n-1}(G) + \lambda_n(G)$. Hence $G^*$ can only possibly differ from $G$ by the removal of edges where $x_ix_j + y_iy_j = 0$. Yet $G$ was selected with minimal number of edges hence no edges were removed, so $G = G^*$. 
\end{proof}

To each vertex of $G$, associate a $\mathbb{R}^2$ vector $v_i = \binom{x_i}{y_i}$. Then the adjacency condition becomes $i \sim j \iff v_i \cdot v_j < 0 \iff \angle (v_i, v_j) > \pi/2$. 

Note that $G^*$ is not necessarily unique if the geometric multiplicity of the last eigenvalue is at least 3 or the geometric multiplicity of the second-last eigenvalue is at least 2, due to the choice in orthonormal vectors. If the geometric multiplicity of the last is exactly 2, then rotating our orthonormal pair corresponds to rotating the $v_i$, hence adjacencies are invariant and $G^*$ is unique. $G^*$ is immediately unique if the geometric multiplicity of the last and second-last eigenvalues are both 1. 

We now demonstrate that the output graphs of this operation have a rich structure, with traces of pivalous graphs as introduced by Leonida and Li \cite{leonida2024structure}. 

\begin{thm}\label{thm:operation-run}
    Suppose $G = H^*$ for some graph $H$. Then there exists a labelling of the vertices $[1, 2, \dots, n]$ such that the neighbours of vertex $i$ are $\{a_i, a_i+1, \dots, b_i\}$ mod $n$, and that the $(a_i), (b_i)$ are increasing. 
\end{thm}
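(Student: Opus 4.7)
The plan is to leverage the geometric picture underlying Operation $^*$: each vertex $i$ carries a vector $v_i = \binom{x_i}{y_i} \in \mathbb{R}^2$, with adjacency $i \sim j$ iff $v_i \cdot v_j < 0$, i.e., iff the angle between $v_i$ and $v_j$ exceeds $\pi/2$. I would first assume all $v_i$ are nonzero (isolated vertices, where $v_i = 0$, are treated separately at the end). For each such vertex, let $\theta_i \in [0, 2\pi)$ denote the argument of $v_i$, and relabel the vertices $1, 2, \ldots, n$ in cyclic order of increasing $\theta_i$, breaking ties arbitrarily (two vertices with equal $\theta_i$ are non-adjacent and have identical neighbourhoods among the other vertices, so the tie-breaking does not matter).

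Under this labelling, $j \in N(i)$ iff $\theta_j$ lies in the open arc $(\theta_i + \pi/2, \theta_i + 3\pi/2) \pmod{2\pi}$, which has length $\pi$. Because the vertices are placed on the circle in angular order, the neighbourhood of $i$ forms a contiguous cyclic arc, and I would define $a_i, b_i$ to be its first and last indices modulo $n$. For monotonicity, note that as $i$ increments, $\theta_i$ weakly increases, so the open arc $(\theta_i + \pi/2, \theta_i + 3\pi/2)$ rotates forward by $\theta_{i+1} - \theta_i \ge 0$. Both endpoints of this arc, and thus $a_i$ and $b_i$, can only shift cyclically forward. Choosing the starting vertex of the labelling carefully -- specifically, starting just after the cyclic wrap point of both $a$ and $b$ -- makes $(a_i)$ and $(b_i)$ genuinely increasing sequences in $\{1, \ldots, n\}$.

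The main obstacle is handling isolated vertices with $v_i = 0$: they must be inserted into the cyclic labelling at positions that do not break the contiguous arc structure of the non-isolated vertices' neighbourhoods. A position $\phi$ is safe for such an insertion iff $\phi$ lies outside every non-isolated vertex's neighbour arc, equivalently iff all non-isolated $\theta_j$ lie in $[\phi - \pi/2, \phi + \pi/2] \pmod{2\pi}$. Such a $\phi$ exists whenever the non-isolated $v_j$'s fit in a closed half-plane through the origin; otherwise one needs a more delicate argument -- for example, allowing an isolated vertex to be encoded by the convention $a_i = b_i + 1$ (signifying an empty arc) and showing that one can slot the isolated vertices into the cyclic order while keeping $(a_i), (b_i)$ monotone, using the counting fact that if every angular gap is straddled by some arc then the total neighbourhood count is large enough to contradict isolation.
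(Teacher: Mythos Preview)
Your core argument is exactly the paper's: assign to each vertex the planar vector $v_i=\binom{x_i}{y_i}$, sort the vertices by argument, and observe that the neighbourhood of $i$ is the set of $j$ with $\angle(v_i,v_j)>\pi/2$, which is an open half-plane and hence a cyclic interval in this ordering; rotating the base vertex rotates the half-plane, so the interval endpoints move monotonically. The paper's proof is two sentences to this effect; your treatment of ties and of the monotonicity of $(a_i),(b_i)$ is a more careful write-up of the same idea.

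Where you diverge is in worrying about vertices with $v_i=0$. The paper's proof simply ignores this possibility, and you are right to flag it --- but your proposed fixes cannot be completed, because the statement as written is actually false in that corner case. Take $H=K_3\sqcup K_1$: its spectrum is $2,0,-1,-1$, and the $(-1)$-eigenspace is $\{(a,b,c,0):a+b+c=0\}$, so any orthonormal pair $x,y$ gives $v_4=0$ while $v_1,v_2,v_3$ are three vectors at mutual angles $2\pi/3$. Then $H^*=K_3\sqcup K_1$ again, and in any cyclic labelling of four vertices the isolated vertex sits between two of the triangle vertices, splitting the third triangle vertex's neighbourhood into a non-interval. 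So no amount of careful insertion or empty-arc conventions will rescue the literal statement; this is a small sloppiness in the paper's formulation rather than a gap you are expected to fill. The paper only invokes the theorem for $n/2$-regular graphs and near-regular graphs (Theorems~\ref{thm:invariant-symmetry}, \ref{thm:invariant-bipartite}, Claim~\ref{clm:close-symmetry}), where no $v_i$ vanishes, so the issue is harmless downstream. For your write-up it suffices to prove the result under the hypothesis that all $v_i\neq 0$ and note the edge case.
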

\begin{proof}    
    If we plot all the vectors $v_1, \dots, v_n$ and re-label them in anti-clockwise order, since the neighbours of $i$ are exactly the vectors at an obtuse angle away from $v_i$, this forms a consecutive run, taken mod $n$ where necessary. It also satisfies the fact that the start and end-points of the run increase as we rotate anti-clockwise around the origin.
\end{proof}

\begin{thm}\label{thm:operation-clique}
    Suppose $G = H^*$ for some graph $H$. Then $\omega(G) \le 3$. 
\end{thm}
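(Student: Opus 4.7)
The plan is to invoke the geometric interpretation already set up in Definition \ref{dfn:operation} and Theorem \ref{thm:operation-run}: to each vertex $i$ of $G = H^*$ we associate the planar vector $v_i = \binom{x_i}{y_i} \in \mathbb{R}^2$, and $i \sim j$ in $G$ if and only if $v_i \cdot v_j < 0$. For nonzero vectors this is equivalent to saying the angle between $v_i$ and $v_j$ strictly exceeds $\pi/2$. Consequently, a clique in $G$ corresponds precisely to a set of planar vectors that are pairwise at obtuse angles, and the theorem reduces to the statement that no four vectors in $\mathbb{R}^2$ can be pairwise at angles strictly greater than $\pi/2$.

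My approach is to prove this planar fact by a simple pigeonhole argument. First I would note that any vertex of a clique of size $\ge 2$ must satisfy $v_i \ne 0$, since $v_i = 0$ would force $v_i \cdot v_j = 0$ for every $j$, making $i$ isolated. So in a hypothetical four-clique we may normalize the four corresponding vectors to the unit circle and record their arguments $\theta_1 < \theta_2 < \theta_3 < \theta_4 \in [0, 2\pi)$.

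The core step is then to consider the four consecutive angular gaps $g_k = \theta_{k+1} - \theta_k$ for $k = 1, 2, 3$ and $g_4 = 2\pi - \theta_4 + \theta_1$. Since $\sum_k g_k = 2\pi$, at least one $g_k$ satisfies $g_k \le \pi/2$. The angle between the two unit vectors separated by this gap equals $\min(g_k, 2\pi - g_k) \le \pi/2$, contradicting the assumption that all six pairwise angles are strictly obtuse. Hence no four-clique exists, and $\omega(G) \le 3$.

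I do not anticipate any serious obstacle here: the whole content of Operation $^*$ is already encoded as a planar configuration of vectors, so the statement collapses to the elementary fact that $\mathbb{R}^2$ admits at most three pairwise-obtuse directions. The only mild subtlety is discarding zero vectors, which is immediate from the definition of adjacency.
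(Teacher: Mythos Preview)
Your proposal is correct and is essentially the same argument as the paper's own proof: both reduce to the pigeonhole observation that four planar vectors have consecutive angular gaps summing to $2\pi$, so some gap is at most $\pi/2$, contradicting pairwise obtuseness. The only difference is that you spell out the (harmless) detail that clique vertices must have $v_i \neq 0$, which the paper leaves implicit.
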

\begin{proof}
    Suppose FTSOC (for the sake of contradiction) there exists a $K_4$, say $w_1, w_2, w_3, w_4$ such that $w_i \cdot w_j < 0$ for all pairs $\{i, j\} \subset \{1, 2, 3, 4\}$. Draw the four vectors on the plane. A pair of consecutive vectors must have an acute angle between them, otherwise the sum of the angles is $> 2\pi$, contradiction. Between these two, $w_i \cdot w_j \ge 0$, contradiction.
\end{proof}

\begin{thm}\label{thm:operation-chromatic}
    Suppose $G = H^*$ for some graph $H$. Then $\chi(G) \le 4$. 
\end{thm}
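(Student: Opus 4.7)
The plan is to exploit the geometric description from Definition \ref{dfn:operation}, in which each vertex $i$ is identified with the vector $v_i = \binom{x_i}{y_i} \in \mathbb{R}^2$ and $i \sim j$ in $G$ iff $v_i \cdot v_j < 0$, i.e.\ iff the angle between $v_i$ and $v_j$ is strictly greater than $\pi/2$. Since any single open sector of angular width $\pi/2$ contains only vectors pairwise separated by strictly less than $\pi/2$ (hence having positive pairwise inner products), partitioning $\mathbb{R}^2 \setminus \{0\}$ into four such sectors should directly yield a proper $4$-coloring.

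Concretely, first observe that any vertex with $v_i = 0$ satisfies $v_i \cdot v_j = 0$ for every $j$, hence is isolated in $G$ and may be given an arbitrary color. For every remaining vertex, let $\theta_i \in [0, 2\pi)$ denote the angle of $v_i$ and assign the color
\[
c(i) \ := \ \left\lfloor \tfrac{2\theta_i}{\pi} \right\rfloor + 1 \ \in \ \{1,2,3,4\},
\]
so that vertices whose angles fall in the half-open arc $[(k-1)\pi/2,\, k\pi/2)$ all receive color $k$.

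To verify properness, suppose $i \neq j$ and $c(i) = c(j)$. Then $\theta_i$ and $\theta_j$ lie in a common half-open arc of length $\pi/2$, so the unsigned angle between $v_i$ and $v_j$ is strictly less than $\pi/2$, giving $v_i \cdot v_j > 0$ and hence $i \not\sim j$. Therefore $c$ is a proper coloring, and $\chi(G) \le 4$.

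I do not anticipate a real obstacle here; the argument is essentially the same packing idea that bounded $\omega(G)$ by $3$ in Theorem \ref{thm:operation-clique}, simply recast as a coloring. The only mild subtlety is ensuring that vectors lying exactly on a dividing ray get a consistent color, which is handled automatically by the half-open convention above; zero vectors correspond to isolated vertices and so cause no issue.
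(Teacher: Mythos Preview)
Your proof is correct and is essentially the same as the paper's: both partition $\mathbb{R}^2$ into four angular sectors of width $\pi/2$ (the paper uses the sign of each coordinate, with zero lumped in with the positives; you use half-open arcs via $\lfloor 2\theta_i/\pi\rfloor$), and then observe that two vectors in the same sector cannot form an obtuse angle and so are non-adjacent. Your half-open convention and explicit treatment of $v_i = 0$ make the boundary cases slightly cleaner, but there is no substantive difference.
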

\begin{proof}
    Consider the partition of the vertices into quadrants of the usual Cartesian plane, i.e. by sign of $x_i$ and $y_i$ with WLOG zero included into the positives. Then vectors within a quadrant differ by an acute or right angle, hence form an independent set.
\end{proof}

Combining Corollary \ref{cor:worst-invariant} with Theorems \ref{thm:operation-clique} and \ref{thm:operation-chromatic} immediately yields Theorem \ref{thm:worst-structure-intro}. We explore some specific sub-cases of output graphs which admit additional structure. 

\begin{thm}\label{thm:operation-three}
    Suppose $G = H^*$ for some graph $H$. If there exist two anti-clockwise consecutive neighbours in $\{v_1, \dots, v_n\}$ which differ by an obtuse angle, then $\chi(G) \le 3$. 
\end{thm}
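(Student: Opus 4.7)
The plan is to build an explicit $3$-colouring of $G$ by splitting the vectors $v_1,\ldots,v_n$ into three angularly localised independent sets. Since adjacency in $G = H^*$ is determined by pairs of vectors subtending an obtuse angle, any set of vectors confined to an arc of angular width at most $\pi/2$ is automatically independent, and the hypothesis will force enough angular concentration that three such arcs suffice.

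Let $v_a, v_b$ be the pair given by the assumption, with $v_b$ immediately anti-clockwise of $v_a$ and angular separation $\theta \in (\pi/2, \pi]$. After a rotation I may place $v_a$ at angle $0$ and $v_b$ at angle $\theta$; by hypothesis no other $v_i$ lies in the open arc $(0,\theta)$, so every other $v_i$ lies in the complementary arc $(\theta, 2\pi)$ of length $2\pi-\theta < 3\pi/2$. I would partition this arc as
\[
R_1 = (\theta, \theta + \tfrac{\pi}{2}), \qquad R_2 = [\theta + \tfrac{\pi}{2}, \tfrac{3\pi}{2}), \qquad R_3 = [\tfrac{3\pi}{2}, 2\pi),
\]
each of angular width at most $\pi/2$ (the middle region has width $\pi - \theta < \pi/2$), so that each region is internally independent. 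A direct angle check then shows $v_b$ is non-adjacent to every vector in $R_1$ (angle from $v_b$ is in $(0,\pi/2)$) and $v_a$ is non-adjacent to every vector in $R_3$ (smaller angle from $v_a$ is in $(0,\pi/2]$). The three classes
\[
C_1 = \{v_a\} \cup R_3, \qquad C_2 = \{v_b\} \cup R_1, \qquad C_3 = R_2
\]
are therefore each independent, giving a proper $3$-colouring.

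The only real care lies in the boundary angles and degenerate cases rather than any substantive obstacle. A vector at exactly $3\pi/2$ has zero dot product with $v_a$ and sits in $C_1$; a vector at exactly $\theta + \pi/2$ has zero dot product with $v_b$ and sits in $C_3$; the half-open conventions above place such boundary vectors in the intended class. When $\theta = \pi$, the middle region $R_2$ becomes empty and the colouring collapses to two classes, still valid. Zero vectors, if present, are isolated in $G$ and may be assigned arbitrarily. The conceptual heart of the argument is simply that the obtuse-angle hypothesis confines the remaining $n-2$ vectors to an arc of length strictly less than $3\pi/2$, and any such arc can be covered by three sub-arcs of width at most $\pi/2$ whose flanking pieces pair naturally with $v_a$ and $v_b$.
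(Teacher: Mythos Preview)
Your proof is correct and follows essentially the same approach as the paper: both exploit the obtuse gap to cover all vectors by three arcs of angular width at most $\pi/2$, each of which is automatically independent. The paper's version is marginally cleaner---it simply rotates the plane so that one full quadrant sits inside the gap, leaving three quadrants as the colour classes---but your explicit arc decomposition with $v_a,v_b$ attached to the flanking regions accomplishes the same thing.
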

\begin{proof}
    Rotate the Cartesian plane to force at most three quadrants to contain a vector, with the last quadrant contained inside this obtuse angle gap. Thus, we get at most three independent sets. 

    Alternatively, one can notice that $\overline{G}$ is a chordal graph, hence perfect. By the perfect graph theorem, $G$ is perfect hence has $\chi(G) = \omega(G) \le 3$. Both results are corollaries of the strong perfect graph theorem of Chudnovsky et al. \cite{chudnovsky2006perfect}.
\end{proof}

\begin{thm}\label{thm:operation-regular}
    Suppose $G = H^*$ for some graph $H$. If $G$ is regular and each $v_i$ occurs exactly $k$ times for some positive integer $k$, then $\overline{G}$ is the closed blow-up by $k$ of a circulant graph. 
\end{thm}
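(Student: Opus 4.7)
The plan is first to exhibit $\overline{G}$ as a closed $k$-blow-up of the quotient graph on classes of equal vectors, and then to show this quotient is circulant.

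Partition $V(G)$ into $m = n/k$ classes $U_1, \ldots, U_m$ by the common value $u_a$ of $v_i$ on $U_a$. Within a class, $v_i \cdot v_j = |u_a|^2 \ge 0$, so the $k$ vertices are pairwise non-adjacent in $G$ and thus form a clique $K_k$ in $\overline{G}$. Between distinct classes $U_a, U_b$, adjacency in $G$ is uniformly governed by the sign of $u_a \cdot u_b$, so $G[U_a \cup U_b]$ is either the complete bipartite graph $K_{k,k}$ or empty. Defining $F$ on $[m]$ by $a \sim_F b \iff u_a \cdot u_b < 0$, we therefore have that $\overline{G}$ is precisely the closed blow-up by $k$ of $\overline{F}$. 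It suffices to prove $\overline{F}$ (equivalently $F$) is circulant.

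Regularity of $G$ (say degree $d$) forces $F$ to be $r$-regular with $r = d/k$. Labelling $[m]$ in anti-clockwise angular order of the $u_a$'s, Theorem \ref{thm:operation-run} applied to $F$ tells us that $N_F(a)$ is a cyclic arc $\{a + p_a, a + p_a + 1, \ldots, a + p_a + r - 1\}$ modulo $m$, with the arc endpoints weakly cyclically increasing in $a$. Since adjacency depends only on the directions of the $u_a$ and not their magnitudes, we may further assume (WLOG, disregarding the trivial case $d = 0$) that each $u_a$ is a unit vector, and $r$-regularity then says every open antipodal semicircle contains exactly $r$ of them.

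The heart of the proof is to extract from these constraints a cyclic automorphism of $F$ of order $m$ acting regularly on $[m]$. In well-behaved configurations (for instance, equally spaced angles), the shifts $p_a$ are constant and $F$ is directly the circulant graph on $\mathbb{Z}/m\mathbb{Z}$ with connection set $\{p, p+1, \ldots, p+r-1\}$. I expect the main obstacle to be the case where the angular spacings are non-uniform but regularity still holds---for instance, the paired configuration with $m = 4$, $r = 2$ and angles $0, \epsilon, \pi, \pi + \epsilon$ yields $F = K_{2,2}$, whose circulant structure comes not from the angular rotation but from a different cyclic permutation such as $(1\,3\,2\,4)$. Handling such cases in general will require carefully combining the adjacency-symmetry condition on the shifts $(p_a)$ with the cyclic monotonicity of the run endpoints from Theorem \ref{thm:operation-run} to construct the required cyclic symmetry of $F$, possibly via a classification of the paired/grouped structure of the $u_a$'s on the unit circle.
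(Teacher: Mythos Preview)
Your reduction to the quotient graph is correct and matches the paper's first step: $\overline{G}$ is indeed the closed $k$-blow-up of a graph $G'$ on the $m=n/k$ distinct vectors, and it remains to show $G'$ is circulant. But your argument stops there --- the last paragraph is a plan, not a proof, and you explicitly leave the main case unresolved.

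The paper closes this gap with a short counting argument that you do not attempt. There are exactly $m$ distinct vectors $v_i'$ and exactly $m$ possible length-$d$ cyclic runs (each determined by its first vector). The paper observes that the neighbourhood run of $i$ \emph{determines} $v_i'$, via the relation ``$\lambda v_i'$ equals the sum of its run of $d$ consecutive vectors''; hence two distinct vectors cannot share the same neighbourhood run, the map $i\mapsto(\text{run of }i)$ is injective and therefore bijective, and together with the monotonicity of run endpoints from Theorem~\ref{thm:operation-run} this bijection must be a cyclic rotation. That is the circulant structure, obtained directly in the anticlockwise labelling.

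Your $K_{2,2}$ example shows precisely why the purely combinatorial information you retain --- regular degree, cyclic-arc neighbourhoods, monotone endpoints --- is not enough on its own: configurations with repeated neighbourhood runs are combinatorially consistent with those constraints. The missing input is the algebraic information carried by the actual vectors $v_i'$ (not just their directions). When you write ``we may further assume each $u_a$ is a unit vector'', you discard exactly the data the paper uses to force injectivity; the magnitudes, and the fact that the $v_i$ come from an orthonormal eigenvector pair, are what prevent distinct $v_i'$ from having identical runs. Rather than chasing a relabelling that makes twin configurations circulant, you should argue that twins cannot occur in the first place.
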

\begin{proof}
    Consider $\overline{G}$ where $i \sim j \iff \angle(v_i, v_j) \le \pi/2$. Note that if $v_i = v_j$, then they have the exact same neighbourhood, hence $\overline{G}$ is a closed blow-up by $k$ of say $G'$. 

    Now suppose $G'$ has regularity $d$. We have $\frac{n}{k}$ distinct vectors now, say $(v_i')$, where $\lambda v_i'$ is the sum of a run of $d$ consecutive vectors. Yet each such run is determined uniquely by the first vector, hence there are $\frac{n}{k}$ such runs and they biject with the vectors. Given Theorem \ref{thm:operation-run}, it must necessarily be circulant. 
\end{proof}

\begin{cor}\label{cor:regular-output}
    Suppose $G = H^*$ for some graph $H$. If $G$ is regular and each $v_i$ occurs exactly $k$ times for some positive integer $k$, then $\frac{\lambda_3(G)}{|V(G)|} \le \frac13$. 
\end{cor}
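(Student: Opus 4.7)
The plan is to pass through the blow-up structure from Theorem \ref{thm:operation-regular}: since $\overline{G}$ is the closed blow-up by $k$ of a circulant graph $G'$, the graph $G$ itself is the open blow-up by $k$ of $\overline{G'}$. Its adjacency matrix then factors as $A(\overline{G'}) \otimes J_k$, so by a standard Kronecker-product computation the spectrum of $G$ equals $\{k\mu_j(\overline{G'})\}_j$ together with $(k-1)n'$ copies of $0$, where $n' = n/k$. Hence $\lambda_3(G) \le \max\{0, k\lambda_3(\overline{G'})\}$, and it suffices to show $\lambda_3(\overline{G'}) \le n'/3$.

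Next I would pin down $\overline{G'}$ concretely. Because adjacency in $G'$ depends only on the sign of $v_i' \cdot v_j'$ and $G'$ is circulant, the shift-by-one automorphism from Theorem \ref{thm:operation-regular} must act as a planar rotation on the vectors, forcing the $n'$ distinct $v_i'$ to be equi-angular. This identifies $G'$ with the cycle power $C_{n'}^{r}$ for $r = \lfloor n'/4 \rfloor$, so $\overline{G'}$ is the circulant whose connection set is the single consecutive arc $\{r+1, \dots, n'-r-1\}$ of length $d = n' - 2r - 1$.

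For such a single-arc circulant the eigenvalues satisfy the Dirichlet-kernel identity
\[
\mu_j = (-1)^j \cdot \frac{\sin(d\pi j/n')}{\sin(\pi j/n')}, \qquad 1 \le j \le n'-1,
\]
with $\mu_0 = d$. For $j \in \{1, n'-1\}$ the numerator $\sin(d\pi j/n')$ is strictly positive (since $0 < d < n'$), so $\mu_j < 0$ and these eigenvalues cannot be $\lambda_2$ or $\lambda_3$. For every other $j$, combining the identity with the elementary estimate $\sin(\pi j/n') \ge 2\min(j, n'-j)/n'$ gives
\[
|\mu_j| \le \frac{n'}{2\min(j, n'-j)} \le \frac{n'}{4} < \frac{n'}{3}.
\]
Therefore $\lambda_2(\overline{G'}) \le n'/4$, so $\lambda_3(\overline{G'}) \le n'/3$, and multiplying by $k$ yields $\lambda_3(G) \le n/3$.

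The main obstacle is the equi-angular step: one must justify that the circulant automorphism produced by Theorem \ref{thm:operation-regular} genuinely corresponds to an angular rotation of the vectors $v_i'$, rather than to some other abstract permutation of their labels. Once this is secured, everything else is a routine Dirichlet-kernel estimate, and the handful of small cases $n' \le 3$ in which the bound on $|\mu_j|$ is vacuous can be checked by hand to show $\lambda_3(\overline{G'}) \le 0$ directly.
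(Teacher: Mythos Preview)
Your route is quite different from the paper's. The paper simply observes that the closed blow-up of a circulant is again circulant, so $G$ itself is circulant, and then invokes Leonida and Li's result \cite{leonida2024structure} that every abelian Cayley graph satisfies $\lambda_3/n \le 1/3$. You instead pass to the quotient $\overline{G'}$ via the Kronecker factorisation and bound its eigenvalues directly with a Dirichlet-kernel estimate --- a more self-contained argument that in fact delivers the sharper bound $\lambda_2(\overline{G'}) \le n'/4$ once $n' \ge 4$.

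The obstacle you flag is genuine, but you are aiming for more than you need. The equi-angular claim is unjustified: the cyclic automorphism of $G'$ is purely a graph symmetry, with no a priori action on the vectors $v_i'$ (which come from eigenvectors of $H$, not of $G'$), so there is no reason it should be realised as a planar rotation. Likewise the specific identification $r = \lfloor n'/4\rfloor$ does not follow. Fortunately neither is needed: your bound $|\mu_j| \le n'/(2\min(j,n'-j))$ is uniform in the arc length $d$, so all you actually require is that $\overline{G'}$ be a single-arc circulant in the angular labelling. This is precisely what the proof of Theorem~\ref{thm:operation-regular} establishes --- it shows that the $n'$ consecutive-run neighbourhoods coming from Theorem~\ref{thm:operation-run} biject with the $n'$ distinct vectors via their starting points, forcing the connection set in the angular ordering to be a single symmetric interval. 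Replace the equi-angular paragraph with this observation and your proof is complete.
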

\begin{proof}
    Leonida and Li \cite{leonida2024structure} proved the result for abelian Cayley graphs, and circulant graphs are Cayley graphs over $\mathbb{Z}_n$. Note that the closed blow-up of a circulant graph is still circulant, hence the result follows from Theorem \ref{thm:operation-regular}. 
\end{proof}

\subsection{Examples of invariant families}

In alignment with the intuition that decreasing $\overline{\lambda_{n-1}} + \overline{\lambda_n}$ ought to correspond to increasing $\lambda_3$, we prove that many families of graphs with large third eigenvalue proportion are invariant under the joint operation of $G \to \overline{G} \to \overline{G}^* \to \overline{\overline{G}^*}$. Alternatively, $\overline{G}$ with significantly negative $\lambda_{n-1}$ proportion is invariant under $^*$. 

Following Leonida and Li \cite{leonida2024structure}, define $H_{a,b}$ to be the closed vertex multiplication of $C_6$ by $[a,b,a,b,a,b]$ and $Pi_n$ the pivalous graph on $n$ vertices. They proved that as $n \to \infty$ (or $3(a+b) \to \infty$), we have $\frac{\lambda_3(H_{a,b})}{3(a+b)} \to \frac13$ and $\frac{\lambda_3(Pi_n)}{n} \to \frac1\pi$. 

\begin{figure}[H]
    \centering
    \begin{tabular}{cc}
         \includegraphics[width = 0.25\textwidth]{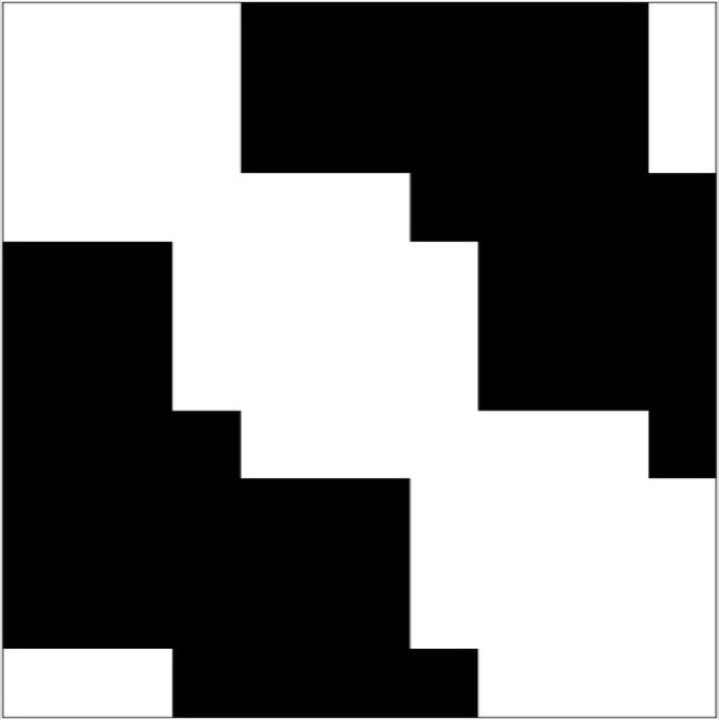} & \includegraphics[width = 0.25\textwidth]{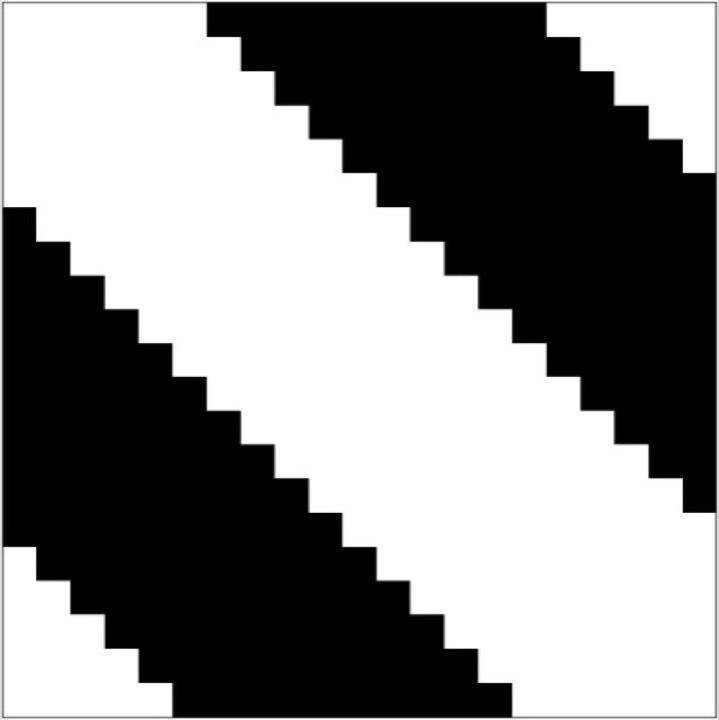} \\
    \end{tabular}
    \caption{Invariant families. $\overline{H_{5,2}}$ on the left, $\overline{Pi_{21}}$ on the right.}
    \label{fig:invariant-families}
\end{figure}

\begin{thm}\label{thm:H-invariant}
    $H_{a,b}$ is invariant under $G \to \overline{G} \to \overline{G}^* \to \overline{\overline{G}^*}$.
\end{thm}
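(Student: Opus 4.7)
The invariance of $H_{a,b}$ under $G \mapsto \overline{\overline{G}^*}$ is equivalent to $(\overline{H_{a,b}})^* = \overline{H_{a,b}}$, so the goal is to exhibit an orthonormal eigenvector pair $(x,y)$ for the eigenvalues $\{\lambda_{n-1}, \lambda_n\}$ of $\overline{H_{a,b}}$ whose sign pattern $x_ix_j + y_iy_j < 0$ reconstructs exactly the edge set of $\overline{H_{a,b}}$. The first step is to diagonalise $\overline{H_{a,b}}$ via the equitable partition into the six blocks $V_1, \dots, V_6$ of sizes $a, b, a, b, a, b$. Each block is independent and block-to-block adjacencies are those of the prism $\overline{C_6}$ (triangles $\{V_1, V_3, V_5\}$, $\{V_2, V_4, V_6\}$ plus matching $\{V_i, V_{i+3}\}$). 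Vectors orthogonal to constants within each block contribute eigenvalue $0$ with multiplicity $n-6$, and the rest of the spectrum is that of the $6 \times 6$ quotient matrix $B$ with $B_{ij} = |V_j|\cdot[ij \in E(\overline{C_6})]$. Exploiting the order-$3$ rotation symmetry $V_i \mapsto V_{i+2}$, one decomposes $\mathbb{C}^6 = E_1 \oplus E_\omega \oplus E_{\bar\omega}$ by characters of $\mathbb{Z}_3$, and $B$ restricts on each isotypic component to a $2 \times 2$ matrix. A direct computation yields eigenvalues $(a+b) \pm \sqrt{a^2-ab+b^2}$ on $E_1$ and $\{0, -(a+b)\}$ on each of $E_\omega$ and $E_{\bar\omega}$. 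In particular $\lambda_{n-1} = \lambda_n = -(a+b)$, with a block-constant $2$-dimensional eigenspace.

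The second step is to take the real and imaginary parts of the $-(a+b)$-eigenvector in $E_\omega$. These give an orthogonal block-constant pair with block values
\[ x = \bigl(\tfrac12,\, 1,\, \tfrac12,\, -\tfrac12,\, -1,\, -\tfrac12\bigr), \qquad y = (-1,\, 0,\, 1,\, 1,\, 0,\, -1), \]
i.e.\ six vectors $v^{(1)}, \dots, v^{(6)}$ at the corners of a (stretched) hexagon in $\mathbb{R}^2$. Their norms differ, so independent normalisation rescales the two axes by factors $\alpha, \beta$ with $\alpha^2 = 2/(3(a+b))$ and $\beta^2 = 1/(2(a+b))$. A case-check of the $\binom{6}{2}$ between-block pairs confirms that $v^{(k)} \cdot v^{(l)}$ has the correct sign in every case: most signs are immediate, and the only nontrivial condition (arising for the distance-$2$ prism edges like $\{V_1, V_3\}$ and their $C_6$-distance-$2$ non-edges like $\{V_1, V_6\}$) is $\alpha^2 < 4\beta^2$, which collapses to the tautology $\tfrac13 < 1$. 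Same-block pairs give $v^{(k)} \cdot v^{(k)} = \|v^{(k)}\|^2 > 0$, matching the non-adjacency. Hence Operation $^*$ reconstructs $\overline{H_{a,b}}$ exactly, and taking the complement recovers $H_{a,b}$.

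I do not anticipate a genuine obstacle, as the whole argument is forced by the $\mathbb{Z}_3$-symmetry of $\overline{C_6}$ and reduces to one trivial scalar inequality. The one subtlety worth flagging is that $x$ and $y$ have different $\ell^2$-norms, so the relevant dot products for the adjacency test are the rescaled ones after independent normalisation, not the angles of the unnormalised hexagon; reading adjacencies off the unnormalised picture would silently assume $\alpha = \beta$.
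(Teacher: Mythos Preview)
Your argument is correct and follows essentially the same route as the paper: locate the $2$-dimensional eigenspace of $\overline{H_{a,b}}$ at $-(a+b)$, pick a real orthonormal basis, and check that the sign of $v_i\cdot v_j$ reproduces the prism $\overline{C_6}$ between blocks. The one difference is in the choice of basis. The paper writes down the complex block-constant eigenvector $z$ whose block values are the sixth roots of unity $1,\omega,\dots,\omega^5$ with $\omega=e^{i\pi/3}$, and uses $z^Tz=0$, $z^T\bar z=2$ to see immediately that $\xi=\mathrm{Re}\,z$ and $\eta=\mathrm{Im}\,z$ are already orthonormal; the resulting $v_k$ then sit at a regular hexagon, so the angle condition $\angle(v_k,v_l)>\pi/2$ is read off without any inequality to check. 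Your basis $(x,y)$ is a different real pair in the same eigenspace with unequal norms, which is why you need the separate rescalings $\alpha,\beta$ and the verification $\alpha^2<4\beta^2$; had you taken the $\mathbb{Z}_6$-equivariant basis instead of the $\mathbb{Z}_3$-equivariant one, this step would disappear. One cosmetic slip: the pair $\{V_1,V_6\}$ you cite as a ``$C_6$-distance-$2$ non-edge'' is actually at $C_6$-distance $1$ (it is a $C_6$-edge, hence a non-edge of $\overline{C_6}$); the computation you give for it is nonetheless correct.
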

\begin{proof}
    We claim that for $\omega = \exp(\pi i/3)$,
    \[z = \sqrt{\frac{2}{3(a+b)}}(\underbrace{1, \dots, 1}_a, \underbrace{\omega, \dots, \omega}_b, \underbrace{\omega^2, \dots, \omega^2}_a, \underbrace{\omega^3, \dots, \omega^3}_b, \underbrace{\omega^4, \dots, \omega^4}_a, \underbrace{\omega^5, \dots, \omega^5}_b)\]
    is an eigenvector of $\overline{H_{a, b}}$ of eigenvalue $-(a+b)$. Indeed, note that $\omega^k$ is adjacent to $a \cdot \omega^{k+2} + b \cdot \omega^{k+3} + a \cdot \omega^{k+4}$ or vice versa swapping $a, b$. But nonetheless we get $\omega^k \omega^3 (a(\omega + \omega^{-1}) + b) = -\omega^k(a+b)$, hence $z$ is an eigenvector of eigenvalue $-(a+b)$. 
    
    Let the real and imaginary parts of $z$ be $\xi, \eta$. We have $z = \xi + i\eta$, hence $z^T z = (|\xi|^2 - |\eta|^2) + 2i\xi \cdot \eta$ and $z^T \overline{z} = |\xi|^2 + |\eta|^2$. Now $z^T z = \frac{2}{3(a+b)}(a(1 + \omega^4 + \omega^8) + b(\omega^2 + \omega^6 + \omega^{10})) = 0$, hence $|\xi|^2 = |\eta|^2$ and $\xi \cdot \eta = 0$. Then $z^T \overline{z} = \frac{2}{3(a+b)}(a(1 + 1 + 1) + b(1 + 1 + 1)) = 2$. Thus, $\xi, \eta$ are orthonormal, and must be the last two eigenvectors since $|\lambda_{n-3}| \le \frac{n}{2\sqrt{3}} < \frac{n}{3}$. 
    
    We get that the $\mathbb{R}^2$ vectors are the (scaled) $6$th roots of unity in order, clustered with multiplicities $[a,b,a,b,a,b]$, and they indeed already satisfy $i \sim j \iff \angle(v_i, v_j) > \pi/2$.
\end{proof}

\begin{thm}\label{thm:pivalous-invariant}
    $Pi_n$ is invariant under $G \to \overline{G} \to \overline{G}^* \to \overline{\overline{G}^*}$.
\end{thm}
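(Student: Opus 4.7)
The plan is to mimic the proof of Theorem~\ref{thm:H-invariant}. The pivalous graph $Pi_n$ of Leonida and Li is the circulant graph on $\mathbb{Z}_n$ in which $i \sim j$ iff the arc distance $|i-j|/n$ is less than $1/4$, so $\overline{Pi_n}$ has $i \sim j \iff \cos(2\pi(i-j)/n) < 0$, consistent with the figure $\overline{Pi_{21}}$ in Figure~\ref{fig:invariant-families}. Being circulant, the adjacency matrix of $\overline{Pi_n}$ has the characters $z^{(k)}_j = \omega^{jk}$ as complex eigenvectors, where $\omega = e^{2\pi i/n}$, with real eigenvalues $\mu_k = \sum_{s \in \overline{S}}\cos(2\pi sk/n)$ for the symmetric connection set $\overline{S} = \{s : \cos(2\pi s/n) < 0\}$.

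First I would take $z_j = \sqrt{2/n}\,\omega^j$ and compute $\mu_1$: the imaginary parts cancel by the $s \leftrightarrow -s$ symmetry of $\overline{S}$, leaving a Riemann sum $\mu_1 \approx n\int_{1/4}^{3/4}\cos(2\pi t)\,dt = -n/\pi$. Writing $z = \xi + i\eta$, the algebra from Theorem~\ref{thm:H-invariant} applies verbatim: $z^Tz = (2/n)\sum_j \omega^{2j} = 0$ and $z^T\bar z = 2$ yield $|\xi|^2 = |\eta|^2 = 1$ and $\xi \cdot \eta = 0$, so $\xi,\eta$ are orthonormal eigenvectors both of the same real eigenvalue $\mu_1 < 0$. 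The associated planar vectors $v_j = (\xi_j,\eta_j) = \sqrt{2/n}\bigl(\cos(2\pi j/n),\,\sin(2\pi j/n)\bigr)$ are evenly spaced on a circle, so $v_i \cdot v_j = (2/n)\cos(2\pi(i-j)/n)$, and hence $v_i \cdot v_j < 0$ iff $\cos(2\pi(i-j)/n) < 0$, matching the adjacency of $\overline{Pi_n}$ exactly. Therefore $(\overline{Pi_n})^* = \overline{Pi_n}$, and one more complement yields invariance of $Pi_n$ under the joint operation.

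The main obstacle is verifying that $\xi,\eta$ really are the last two eigenvectors of $\overline{Pi_n}$, because Nikiforov's bound $\lambda_{n-3} \ge -n/(2\sqrt{3}) \approx -0.289n$ does not separate $\mu_1 \approx -0.318n$ from competing characters (in contrast to Theorem~\ref{thm:H-invariant}, where $-n/3 < -n/(2\sqrt{3})$ made verification immediate). I would resolve this by an explicit Fourier calculation: for $k \ne \pm 1$, $\mu_k$ approximates $n\int_{1/4}^{3/4}\cos(2\pi kt)\,dt = \frac{n}{2\pi k}\bigl(\sin(3\pi k/2) - \sin(\pi k/2)\bigr)$, which vanishes for even $k$ and has magnitude at most $n/(3\pi)$ for odd $k \ge 3$. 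Controlling the Riemann-sum error uniformly in $k$ then shows $\mu_k > \mu_1$ for every $k \ne \pm 1$, pinning $\xi,\eta$ as $\lambda_{n-1}$ and $\lambda_n$ of $\overline{Pi_n}$.
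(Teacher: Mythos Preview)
Your argument is the paper's: both take $z_j=\sqrt{2/n}\,\omega^j$ with $\omega=e^{2\pi i/n}$, split into real and imaginary parts, verify orthonormality via $z^Tz=0$ and $z^T\bar z=2$, and note that the resulting planar vectors $v_j$ are the scaled $n$-th roots of unity, so that $v_i\cdot v_j<0\iff\cos\bigl(2\pi(i-j)/n\bigr)<0$, which is precisely the adjacency of $\overline{Pi_n}$. The paper stops there, asserting the last-two-eigenvector property ``identically to above''.

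Your added verification that $\mu_1$ is genuinely the minimum eigenvalue is worthwhile---indeed the paper's own appeal in the $H_{a,b}$ case is not airtight, since a bound on $|\lambda_{n-3}|$ alone does not preclude a single stray eigenvalue below $\mu$. Your stated contrast between the two cases is misplaced, though: $-n/\pi<-n/(2\sqrt{3})$ just as $-n/3<-n/(2\sqrt{3})$, so whatever the paper's intended argument is, it transfers verbatim. More importantly, your Riemann-sum plan breaks down for $k$ of order $n$, where $\cos(2\pi kt)$ oscillates too fast for the approximation error to be uniform in $k$. A cleaner route is the exact geometric sum: since $\overline S=\{a,a+1,\dots,b\}$ with $a+b=n$, one has
\[
\mu_k=(-1)^k\,\frac{\sin\bigl((b-a+1)\pi k/n\bigr)}{\sin(\pi k/n)},
\]
so $|\mu_k|\le 1/\sin(2\pi/n)$ for $2\le k\le n-2$, while $|\mu_1|=\sin\bigl((b-a+1)\pi/n\bigr)/\sin(\pi/n)\ge\cos(\pi/n)/\sin(\pi/n)$. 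The desired strict inequality $|\mu_1|>|\mu_k|$ then reduces to $2\cos^2(\pi/n)>1$, i.e.\ $\cos(2\pi/n)>0$, which holds for all $n\ge 5$.
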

\begin{proof}
    Identically to above, the real and imaginary parts of $\sqrt{\frac{2}{n}}(1, \omega, \dots, \omega^{n-1})$ give the desired last two eigenvectors of $\overline{Pi_n}$ where $\omega = \exp(2\pi i/n)$. The $\mathbb{R}^2$ vectors are therefore the (scaled) $n$-th roots of unity in order and already satisfy $i \sim j \iff \angle(v_i, v_j) > \pi/2$. 
\end{proof}

\subsection{Generalisation to more eigenvalues}\label{sub:generalisation}

Although not immediately relevant to our proof of Theorem \ref{thm:worst-sum}, we quickly note a generalisation of Operation $^*$ to more eigenvalues. 

\begin{dfn}\label{dfn:generalised-operation}
    For positive integer $k$, define \purp{Operation $^{*k}$} on graph $G$ of order $n \ge k$ as follows. Let $G^{*k}$ be any graph such that $V(G^{*k}) = V(G)$ and there exists an orthonormal set of eigenvectors $\{x_{n-k+1}, x_{n-k+2}, \dots, x_n\}$ corresponding to eigenvalues $\{\lambda_{n-k+1}, \dots, \lambda_n\}$ respectively of $G$ such that in $G^{*k}$, we have $i \sim j \iff \sum_{r = 1}^k (x_{n-r+1})_i(x_{n-r+1})_j < 0$. 
\end{dfn}

In particular, $G^* = G^{*2}$. We can prove identically to Theorem \ref{thm:operation-monovar} and Corollary \ref{cor:worst-invariant} the analogous results for $^{*k}$. Indeed, we still have
\begin{align*}
    \lambda_{n-k+1}(G^{*k}) + \dots + \lambda_n(G^{*k}) &= \inf_{U^TU = I} Tr(U^TA(G^{*k})U) \\
    &= \inf_{u_i^Tu_j = \delta_{ij}} \sum_{r = 1}^k \langle u_{n-r+1}, A(G^{*k}) u_{n-r+1} \rangle,
\end{align*}
and as before, we pick $G$ with minimal number of edges.

\begin{thm}\label{thm:generalised-operation-monovar}
    For any $G^{*k}$, we have
    \[\sum_{r = 1}^k \lambda_{n-r+1}(G^{*k}) \le \sum_{r = 1}^k \lambda_{n-r+1}(G).\]
\end{thm}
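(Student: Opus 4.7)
The plan is to mimic the proof of Theorem \ref{thm:operation-monovar} verbatim, with the sum of two Rayleigh quotients replaced by the Ky Fan trace minimization principle. The key fact, already recorded in the display preceding the statement, is that for any Hermitian $M$ of order $n$,
\[\sum_{r=1}^k \lambda_{n-r+1}(M) = \inf_{U^T U = I_k} \operatorname{Tr}(U^T M U),\]
where $U$ ranges over $n \times k$ real matrices with orthonormal columns.

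The first step is to apply this principle to $M = A(G^{*k})$, plugging in the test frame whose columns are the orthonormal eigenvectors $\{x_{n-k+1}, \dots, x_n\}$ supplied by Definition \ref{dfn:generalised-operation}. Writing $s_{ij} := \sum_{r=1}^k (x_{n-r+1})_i(x_{n-r+1})_j$ for brevity, this immediately yields
\[\sum_{r=1}^k \lambda_{n-r+1}(G^{*k}) \;\le\; \sum_{r=1}^k \langle x_{n-r+1}, A(G^{*k}) x_{n-r+1} \rangle \;=\; 2 \sum_{ij \in E(G^{*k})} s_{ij}.\]

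The second step exploits the adjacency rule defining $G^{*k}$, namely $ij \in E(G^{*k}) \iff s_{ij} < 0$. Thus the right-hand side is exactly $2 \sum_{\{i,j\}: s_{ij} < 0} s_{ij}$, and extending the index set of summation to $E(G)$ can only add non-negative contributions. Combining with the observation that each $x_{n-r+1}$ is a unit eigenvector of $A(G)$ for $\lambda_{n-r+1}(G)$, one gets the chain
\[2 \sum_{ij \in E(G^{*k})} s_{ij} \;\le\; 2 \sum_{ij \in E(G)} s_{ij} \;=\; \sum_{r=1}^k \langle x_{n-r+1}, A(G) x_{n-r+1} \rangle \;=\; \sum_{r=1}^k \lambda_{n-r+1}(G),\]
which closes the argument.

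I do not expect any serious obstacle: the proof is a line-for-line adaptation of the $k=2$ case, and the only new ingredient beyond Courant--Fischer is the Ky Fan trace principle, which the paper has itself already invoked. The one minor point worth checking carefully is that the $x_{n-r+1}$ genuinely form an orthonormal system (so that they constitute a legitimate test frame for the Ky Fan infimum), but this is guaranteed by Definition \ref{dfn:generalised-operation}.
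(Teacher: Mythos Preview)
Your proposal is correct and is exactly the argument the paper intends: it applies the Ky Fan trace principle (which the paper records immediately before the theorem) to $A(G^{*k})$ with the orthonormal test frame $\{x_{n-k+1},\dots,x_n\}$, then compares edge-sums via the defining adjacency rule, mirroring the $k=2$ proof line for line. The only phrasing quibble is that passing from $\{s_{ij}<0\}$ to $E(G)$ is not literally an ``extension'' of the index set, but the inequality $\sum_{s_{ij}<0}s_{ij}\le\sum_{ij\in E(G)}s_{ij}$ holds regardless since the left side is the minimum of $\sum_{ij\in E}s_{ij}$ over all edge sets $E$.
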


\begin{cor}\label{cor:generalised-worst-invariant}
    For every $n \ge k$, there exists $G$ of order $n$ with minimal $\lambda_{n-k+1} + \dots + \lambda_n$ such that $G = G^{*k}$.
\end{cor}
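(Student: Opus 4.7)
The plan is to mirror the proof of Corollary \ref{cor:worst-invariant} almost verbatim, replacing the pair $\{x,y\}$ by the orthonormal $k$-tuple $\{x_{n-k+1}, \dots, x_n\}$ that defines $G^{*k}$, and the scalar $x_ix_j + y_iy_j$ by the shorthand $S_{ij} := \sum_{r=1}^k (x_{n-r+1})_i (x_{n-r+1})_j$. The key engines are already in place: Theorem \ref{thm:generalised-operation-monovar} supplies the monovariant, and the variational characterisation of the trace $\mathrm{Tr}(U^TA U)$ over orthonormal $U$ supplies the same tight chain of inequalities as in Theorem \ref{thm:operation-monovar}.

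First I would choose, among graphs of order $n$ minimising $\sum_{r=1}^k \lambda_{n-r+1}$, a graph $G$ with the fewest edges; this exists because there are only finitely many graphs on $n$ labelled vertices. Fix any $G^{*k}$ arising from $G$ (the definition leaves a choice of orthonormal basis for the relevant eigenspaces, but the argument will be insensitive to this choice). Theorem \ref{thm:generalised-operation-monovar} gives $\sum_{r=1}^k \lambda_{n-r+1}(G^{*k}) \le \sum_{r=1}^k \lambda_{n-r+1}(G)$, and minimality of the sum upgrades this to equality.

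Next I would unpack the inequalities behind Theorem \ref{thm:generalised-operation-monovar}, whose only non-trivial step reduces pair-by-pair over $\{i,j\}$ to
\[
2\sum_{ij : S_{ij} < 0} S_{ij} \;\le\; 2\sum_{ij \in E(G)} S_{ij}.
\]
Equality forces every pair with $S_{ij} < 0$ to lie in $E(G)$ and forbids any edge of $G$ from having $S_{ij} > 0$. Since $E(G^{*k}) = \{ij : S_{ij} < 0\}$ by definition, this yields $E(G^{*k}) \subseteq E(G)$, with every edge in the difference $E(G) \setminus E(G^{*k})$ satisfying $S_{ij} = 0$.

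Finally I would delete the edges of $E(G) \setminus E(G^{*k})$ from $G$ to obtain $G'$: each such deletion changes $\langle x_{n-r+1}, A(\cdot)x_{n-r+1}\rangle$ by $-2(x_{n-r+1})_i(x_{n-r+1})_j$, and the total change to $\sum_r \langle x_{n-r+1}, A(\cdot)x_{n-r+1}\rangle$ is $-2 S_{ij} = 0$. The variational bound then gives $\sum_{r=1}^k \lambda_{n-r+1}(G') \le \sum_{r} \langle x_{n-r+1}, A(G')x_{n-r+1}\rangle = \sum_{r}\langle x_{n-r+1}, A(G)x_{n-r+1}\rangle = \sum_{r=1}^k \lambda_{n-r+1}(G)$, so $G'$ also attains the minimum. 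The minimal-edge choice of $G$ then forces $E(G)\setminus E(G^{*k}) = \emptyset$, i.e.\ $G = G^{*k}$. I don't anticipate a real obstacle: the only delicate point is the non-uniqueness of $G^{*k}$ when the relevant eigenspaces have high multiplicity, but the argument runs uniformly for any admissible choice of basis.
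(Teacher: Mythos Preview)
Your proposal is correct and follows exactly the approach the paper intends: choose a minimiser with fewest edges, use Theorem~\ref{thm:generalised-operation-monovar} to force equality in the variational chain, deduce $E(G^{*k}) \subseteq E(G)$ with any surplus edge having $S_{ij}=0$, and invoke edge-minimality to conclude $G=G^{*k}$. The detour through the intermediate graph $G'$ is harmless but unnecessary, since you already established that $G^{*k}$ itself attains the minimum and has $E(G^{*k}) \subseteq E(G)$.
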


Analogous to above, to each vertex associate a $\mathbb{R}^k$ vector
\[v_i = ((x_{n-k+1})_i, (x_{n-k+2})_i, \dots, (x_n)_i)^T.\]

The adjacency condition is again $i \sim j \iff v_i \cdot v_j < 0$. We prove the generalisations of Theorems \ref{thm:operation-clique} and \ref{thm:operation-chromatic}. 

\begin{thm}\label{thm:generalised-operation-clique}
    Suppose $G = H^{*k}$ for some graph $H$. Then $\omega(G) \le k+1$.
\end{thm}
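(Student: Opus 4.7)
The plan is to reduce the problem to the classical combinatorial geometry fact that any collection of vectors in $\mathbb{R}^k$ with pairwise negative inner products has size at most $k+1$. Since $G = H^{*k}$, a clique in $G$ corresponds to a set of vertices whose associated $\mathbb{R}^k$ vectors $v_i$ satisfy $v_i \cdot v_j < 0$ for all distinct $i,j$, so this bound on such configurations directly yields $\omega(G) \le k+1$.

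To establish the geometric fact, I would induct on $k$. The base case $k=1$ is immediate, as at most one positive and one negative scalar can be pairwise negative. For the inductive step, assume the bound holds in $\mathbb{R}^k$ and suppose for contradiction that $v_1,\ldots,v_m \in \mathbb{R}^{k+1}$ are pairwise negatively correlated with $m \ge k+3$. Projecting $v_1,\ldots,v_{m-1}$ onto the hyperplane $v_m^\perp$ gives
\[
u_i \;=\; v_i \;-\; \frac{v_i \cdot v_m}{|v_m|^2}\,v_m \;=\; v_i + c_i v_m, \qquad c_i := -\frac{v_i \cdot v_m}{|v_m|^2} > 0.
\]
A direct expansion using $v_i \cdot v_m = -c_i |v_m|^2$ gives, for $i \ne j$,
\[
u_i \cdot u_j \;=\; v_i \cdot v_j \;-\; c_i c_j |v_m|^2 \;<\; 0,
\]
so the projections remain pairwise negatively correlated.

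Next I would check that no $u_i$ vanishes: if $u_i = 0$ then $v_i = -c_i v_m$ is a negative multiple of $v_m$, which forces $v_i \cdot v_j = -c_i (v_m \cdot v_j) = c_i c_j |v_m|^2 > 0$ for any other $j$, contradicting the pairwise-negativity hypothesis. Hence $u_1,\ldots,u_{m-1}$ are $m-1 \ge k+2$ nonzero vectors in $v_m^\perp \cong \mathbb{R}^k$ with pairwise negative inner products, contradicting the inductive hypothesis. This completes the proof of the geometric fact and hence of Theorem~\ref{thm:generalised-operation-clique}.

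There is no real obstacle here beyond the standard projection calculation; the essential content is the classical $k+1$ bound, which the $k=2$ case (Theorem~\ref{thm:operation-clique}) already exploited through the angle-sum argument in the plane. The induction simply generalises that observation to arbitrary dimension.
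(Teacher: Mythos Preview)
Your proof is correct and follows the same reduction as the paper: a clique in $G=H^{*k}$ corresponds to vectors in $\mathbb{R}^k$ with pairwise negative inner products, and such a configuration has size at most $k+1$. The paper simply cites Rankin~\cite{rankin1947packing} for this geometric fact, whereas you supply the standard self-contained inductive proof via orthogonal projection onto $v_m^\perp$; your computation $u_i\cdot u_j = v_i\cdot v_j - c_ic_j|v_m|^2<0$ and the non-vanishing check are both clean and correct, so your argument is strictly more informative than the paper's citation while covering exactly the same ground.
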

\begin{proof}
    Rankin proved that we cannot have $w_1, \dots, w_{k+2}$ such that $w_i \cdot w_j < 0$ for all pairs in Lemma 8 of \cite{rankin1947packing}. Thus, there is no $K_{k+2}$ sub-graph. 
\end{proof}

\begin{thm}\label{thm:generalised-operation-chromatic}
    Suppose $G = H^{*k}$ for some graph $H$. Then $\chi(G) \le 2^k$.
\end{thm}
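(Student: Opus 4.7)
The plan is to generalise directly the quadrant argument used in the proof of Theorem \ref{thm:operation-chromatic}. In that proof, for $k = 2$ we partitioned the vertex set into four classes according to the signs of the two eigenvector coordinates $(x_i, y_i)$, using the convention that zero coordinates count as positive. The key observation was that within one such class, the two associated $\mathbb{R}^2$ vectors lie in a common closed quadrant, so their inner product is a sum of non-negative terms, hence non-negative, hence the two vertices are not adjacent in $G$. The natural analogue for $G = H^{*k}$ is to partition $V(G)$ into the $2^k$ classes determined by the sign pattern of $v_i \in \mathbb{R}^k$.

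Concretely, for each $\varepsilon \in \{+, -\}^k$ let $C_\varepsilon = \{i \in V(G) : \mathrm{sgn}((v_i)_r) = \varepsilon_r \text{ for all } r = 1, \dots, k\}$, with zeros again counted as $+$. If $i, j \in C_\varepsilon$ then for each coordinate $r$, the numbers $(v_i)_r$ and $(v_j)_r$ have the same sign (or one is zero), so $(v_i)_r (v_j)_r \ge 0$. Summing over $r$ gives $v_i \cdot v_j \ge 0$, and by the defining adjacency rule of Operation $^{*k}$, this forces $i \not\sim j$ in $G$. Thus each $C_\varepsilon$ is an independent set, and the partition $V(G) = \bigsqcup_\varepsilon C_\varepsilon$ is a proper colouring using at most $2^k$ colours.

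There is no real obstacle here; the argument is a clean coordinate-wise generalisation of the $k = 2$ case and uses only the defining inequality of Operation $^{*k}$ together with the trivial fact that a sum of non-negative reals is non-negative. The only thing worth flagging in the write-up is the tie-breaking convention for zero coordinates (otherwise the $C_\varepsilon$ would overlap, which is harmless, or miss vertices lying on a coordinate hyperplane, which is not), but once this convention is fixed the proof is a single line.
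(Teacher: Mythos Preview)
Your proof is correct and follows essentially the same approach as the paper: partition $V(G)$ into $2^k$ classes according to the sign pattern of the coordinates of $v_i \in \mathbb{R}^k$ (with zeros counted as positive), and observe that within each class the inner product $v_i \cdot v_j$ is a sum of non-negative terms, hence each class is independent. The paper's proof is slightly terser but identical in substance.
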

\begin{proof}
    Again partition $\mathbb{R}^k$ by sign of each component, then for $v_i, v_j$ in the same part, we have $(v_i)_r(v_j)_r \ge 0$. Thus $v_i \cdot v_j = \sum_{r = 1}^k (v_i)_r(v_j)_r \ge 0$, hence this forms an independent set. There are $2^k$ such parts formed by a choice of two signs for each component. 
\end{proof}

\section{Imposing $n/2$-regularity}\label{section:n/2-regular}

We first analyse $G$ for which $G = G^*$ and $G$ is $n/2$-regular. This represents a hypothetical worst-case graph in the sense of Nikiforov's bounds in \cite{nikiforov2015extrema} and in light of Theorem \ref{cor:worst-invariant}. In this section we prove the following theorem, and in the next section, seek to formalise this intuition by extending it to the worst-case constraints. 

\begin{thm}\label{thm:n/2-regular}
    There exists $\varepsilon > 0$ such that
    \[\inf\left\{\frac{\lambda_{n-1}(G) + \lambda_n(G)}{n} : G = G^*, G \textrm{ is $n/2$-regular}\right\} > -\frac{\sqrt{2}}{2} + \varepsilon.\]
\end{thm}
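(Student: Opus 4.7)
The plan is to argue by contradiction. Suppose a sequence of graphs $G_n$ with $G_n = G_n^*$, each $n/2$-regular, satisfies $(\lambda_{n-1}(G_n) + \lambda_n(G_n))/n \to -\sqrt{2}/2$. By $n/2$-regularity we have $\lambda_1 = n/2$ and $\sum_{i=1}^n \lambda_i^2 = 2|E(G_n)| = n^2/2$; combined with AM--QM applied to $(\lambda_{n-1}, \lambda_n)$, the hypothesis forces $\lambda_{n-1}(G_n)/n, \lambda_n(G_n)/n \to -1/(2\sqrt{2})$ and $\sum_{i=2}^{n-2} \lambda_i^2 = o(n^2)$. Pick unit orthonormal eigenvectors $x, y$ for $\lambda_{n-1}, \lambda_n$ (both orthogonal to $\mathbf{1}$), set $v_i = (x_i, y_i) \in \mathbb{R}^2$, so that $\sum_i v_i = 0$ and $\sum_i v_i v_i^T = I_2$. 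By $G_n = G_n^*$ together with Theorem \ref{thm:operation-run}, relabel vertices so that each $N(i)$ is an arc of length exactly $n/2$ in the angular ordering of the $v_i$.

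The first family of inequalities comes from the eigenvalue equation $\sum_{j \sim i} x_j = \lambda_{n-1} x_i$. Cauchy--Schwarz gives $\lambda_{n-1}^2 x_i^2 \le (n/2)\sum_{j \sim i} x_j^2$, which on summation recovers $\lambda_{n-1}^2 \le n^2/4$. Under our asymptotics $\lambda_{n-1}^2 \to n^2/8$, only half-saturated, so the local slack quantifies how ``unidirectional'' the $v_j$ in $N(i)$ are. A second family comes from rotating the orthonormal basis: since $\lambda_{n-1}$ and $\lambda_n$ differ by $o(n)$, any $x^{(\alpha)} = \cos\alpha \cdot x + \sin\alpha \cdot y$ is an approximate eigenvector with Rayleigh quotient $\lambda_{n-1}\cos^2\alpha + \lambda_n\sin^2\alpha \approx -n/(2\sqrt{2})$, yielding a full $\alpha$-family of analogous Cauchy--Schwarz inequalities. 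Each $\alpha$ constrains the restricted components $\{x^{(\alpha)}_j : j \sim i\}$ to be nearly proportional to $x^{(\alpha)}_i$ in a distinct direction, an effect that globally pushes the joint distribution of $v_j$'s in $N(i)$ toward rotational symmetry.

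The concluding step is to take local averages and pass to a continuous limit. Along a subsequence the rescaled empirical measures $\mu_n = n^{-1}\sum_i \delta_{\sqrt{n}\, v_i}$ converge weakly to a measure $\mu$ on $\mathbb{R}^2$ satisfying $\int v\,d\mu = 0$, $\int v v^T\,d\mu = I_2$, $\mu(\{w : w \cdot v < 0\}) = 1/2$, and $\int_{w \cdot v < 0} w\,d\mu(w) = -v/(2\sqrt{2})$ for $\mu$-a.e.\ $v$. The second (rotational) pattern forces $\mu$ to be rotation-invariant, and a direct computation shows the only rotation-invariant measure obeying the other constraints is supported on a single circle, which produces the coefficient $-1/\pi$ rather than $-1/(2\sqrt{2})$. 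Since $-1/\pi \ne -1/(2\sqrt{2})$, this contradicts the hypothesis and proves the theorem.

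The principal obstacle will be the rotation argument: the approximate eigenvectors $x^{(\alpha)}$ carry errors from $\lambda_{n-1} \ne \lambda_n$ and from the mid-spectrum slack $\sum_{i=2}^{n-2}\lambda_i^2 = o(n^2)$, and the arc structure from Theorem \ref{thm:operation-run} is only finitely discrete. Carrying these errors through the Cauchy--Schwarz chain and the local averages, while ensuring that the rotational symmetry genuinely passes to the limiting measure, is the delicate heart of the proof.
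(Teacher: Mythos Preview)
Your approach is genuinely different from the paper's, and the final target --- showing that the putative limit configuration would have to be the uniform circle, which gives coefficient $-1/\pi$ rather than $-1/(2\sqrt{2})$ --- is the right one. But there is a real gap precisely where you flag it, and the mechanism you propose cannot close it.

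The paper works structurally rather than analytically. From $n/2$-regularity and $G=G^*$ it writes $A(G)$ in block form $\bigl(\begin{smallmatrix}Q & J-Q\\ J-Q & Q\end{smallmatrix}\bigr)$ and reduces to the last two eigenvalues of $M=2Q-J$ (Theorem~\ref{thm:invariant-symmetry}, Corollary~\ref{cor:invariant-spectrum}). Crucially, it then proves (Theorem~\ref{thm:2Q-J}) that these two eigenvectors fall into rigid monotone patterns, called Type~1 (unimodal, non-negative) and Type~2 (decreasing, with a sign change). These monotonicities feed Chebyshev-sum and smoothing inequalities (Stage~2 of Theorem~\ref{thm:almost-n/2-regular}) that bound the feasible region for $(\nu_1,\nu_2)$ in terms of parameters $X,Z,T$; a long case analysis shows $\nu_1+\nu_2=-\sqrt{2}$ only at one parameter point. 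Finally, Claims~\ref{clm:degree}--\ref{clm:small-box} extract incompatible degree constraints from the Type~1 and Type~2 equality cases separately and show they cannot coexist.

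Your gap is the rotation-invariance step. The ``second family'' $x^{(\alpha)}=\cos\alpha\cdot x+\sin\alpha\cdot y$ contributes nothing beyond your constraint (d): since both $\lambda_{n-1},\lambda_n$ equal $-n/(2\sqrt{2})+o(n)$, the vector equation $\sum_{j\sim i} v_j\approx -\tfrac{n}{2\sqrt{2}}v_i$ already contains every $\alpha$-component, so rotating the basis merely reparametrises the same identity. And nothing in (a)--(d) forces $\mu$ to be rotation-invariant. Indeed, the paper's own equality analysis (front and back of size $n/2$, vanishing middle, specific monotone profiles for each eigenvector) describes a highly anisotropic limiting picture, and the remark following the proof exhibits explicit graphs attaining $\nu_1=-\sqrt{2}/2$ with a decidedly non-circular $v_i$-configuration. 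Your ``first family'' of Cauchy--Schwarz inequalities is, as you yourself note, only half-saturated, so it cannot pin down the direction of the $v_j$ over $N(i)$ either. What you actually need is a direct proof that no probability measure on $\mathbb{R}^2$ satisfies (a)--(d) simultaneously; this is true, but proving it from the integral equation $\int_{w\cdot v<0} w\,d\mu=-v/(2\sqrt{2})$ is essentially the content the paper supplies via the Type~1/Type~2 incompatibility. The paper's concluding remarks in fact sketch a graphon-style route close to yours and name the obstruction (orthogonality of the two eigenfunctions is not preserved under the natural averaging operations), so your instinct is reasonable but the missing step is substantial.
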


We simplify the problem to that of a symmetric matrix with entries in $\{-1, 1\}$ with incredibly rich structure. 

\begin{thm}\label{thm:invariant-symmetry}
    Suppose $G = G^*$ and $G$ is $n/2$-regular. Then for any rotation of the labelling in Theorem \ref{thm:operation-run}, we have
    \[A(G) = \begin{pmatrix}
        Q & J_{n/2} - Q \\
        J_{n/2} - Q & Q
    \end{pmatrix}\]
    where the complement of $Q$ is in a perfect elimination ordering. 
\end{thm}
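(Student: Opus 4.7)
The plan is to translate the hypothesis into a strong geometric constraint on the two-dimensional vectors $v_i=\binom{x_i}{y_i}$ attached by the operation $^*$, show that they must pair up antipodally, and then read off both the block structure and the perfect elimination ordering as consequences.

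First, $n/2$-regularity forces $n$ to be even and makes $\mathbf{1}$ the Perron eigenvector, with eigenvalue $n/2$. The orthonormal pair $x,y$ given by Definition~\ref{dfn:operation} is therefore orthogonal to $\mathbf{1}$ and to each other with unit norm, translating to $\sum_i v_i = 0$ and $\sum_i v_iv_i^T = I_2$. Geometrically, the regularity condition says that for every $v_i$, exactly $n/2$ of the other vectors lie in the open half-plane opposite $v_i$; Theorem~\ref{thm:operation-run} further tells us the neighbourhoods are cyclic arcs $N(i)=\{a_i,\dots,a_i+n/2-1\}\pmod n$ of length $n/2$.

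The key step is to establish antipodal pairing: after a suitable cyclic rotation of the labelling, $v_{i+n/2}$ is a negative scalar multiple of $v_i$ for every $i\le n/2$. The approach I would take is to combine the spectral identity
\[
\sum_{j\in N(i)} v_j \;=\; \mathrm{diag}(\lambda_{n-1},\lambda_n)\,v_i
\]
with the half-plane count and the run structure. Concretely, I would first show that the shift-by-$n/2$ permutation is a graph automorphism (equivalently, that the interval shifts $\delta_i=a_{i+1}-a_i$ are all equal to $1$, forced by how the eigenvector identity interacts with the half-plane partition), and then use the $\pm 1$-eigenspace decomposition of $A(G)$ under this $\mathbb{Z}/2$ action. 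Once block structure is in hand, the $(+1)$-eigenspace carries only the Perron value $n/2$ and $0$ (because $A$ restricts to $J_{n/2}$ there), while the $(-1)$-eigenspace carries $\mathrm{spec}(2Q-J_{n/2})$; the last two eigenvectors must therefore lie in the $(-1)$-eigenspace, and shift-antisymmetry $x_{i+n/2}=-x_i$, $y_{i+n/2}=-y_i$ is exactly $v_{i+n/2}=-v_i$.

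Given antipodal pairing, the block structure follows by direct computation. For $i,k\le n/2$,
\[
A_{i+n/2,\,k+n/2}=\mathbf{1}[c_ic_k\,v_i\cdot v_k<0]=A_{i,k},\qquad A_{i,\,k+n/2}=\mathbf{1}[-c_k\,v_i\cdot v_k<0]=1-A_{i,k},
\]
giving exactly the claimed block form with $Q$ the adjacency of the induced subgraph on $\{1,\dots,n/2\}$; preservation under any cyclic rotation of the labelling is inherited from the antipodal symmetry. For the perfect elimination ordering on $\overline{Q}$, antipodal pairing confines $v_1,\dots,v_{n/2}$ to an arc of length strictly less than $\pi$. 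In the anti-clockwise ordering the post-neighbours of $i$ in $\overline{Q}$ are exactly those $j\in\{i+1,\dots,k_i\}$ with angular distance at most $\pi/2$ from $v_i$, all lying in the sub-arc $(\theta_i,\theta_i+\pi/2]$. Any two such vertices differ in angle by at most $\pi/2$, so they are non-adjacent in $G$ and hence pairwise adjacent in $\overline{Q}$, forming a clique, which verifies the PEO property.

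The main obstacle is the antipodal-pairing step. The half-plane count $g(\theta_i)=n/2$ is not sufficient on its own: one can write down non-antipodal angle configurations satisfying it pointwise. The proof therefore has to genuinely use the spectral content of $G=G^*$; the interaction between the eigenvector identity above and the run structure, which pins down the shifts $\delta_i$, is where the real technical work lies.
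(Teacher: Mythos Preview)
You have inverted the logical order and thereby created the very difficulty you flag as the ``main obstacle''. The paper's proof of the block structure is a two-line counting argument that uses nothing spectral at all: since $G$ is $n/2$-regular and each neighbourhood is a cyclic run (Theorem~\ref{thm:operation-run}), every $N(i)$ is an arc of length \emph{exactly} $n/2$ on $\mathbb{Z}/n\mathbb{Z}$. Any such arc contains precisely one member of each antipodal pair $\{j,\,j+n/2\}$, so $A_{i,j}+A_{i,\,j+n/2}=1$ for all $i,j$. That identity alone says the top-right block is $J_{n/2}-Q$; symmetry of $Q$ and $n/2$-regularity then force the remaining two blocks. Antipodal pairing $v_{i+n/2}=-v_i$ is not an input here; it is derived \emph{afterwards}, in the proof of Theorem~\ref{thm:invariant-bipartite}, as a consequence of the block form just obtained.

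Your proposed route has two concrete problems beyond being unnecessarily hard. First, the claimed equivalence ``shift-by-$n/2$ is an automorphism $\Leftrightarrow$ all $\delta_i=a_{i+1}-a_i=1$'' is false: the automorphism only requires $a_{i+n/2}=a_i+n/2$, which allows arbitrary $\delta$-patterns of period $n/2$. Second, even granting the automorphism, the $\mathbb{Z}/2$ symmetry only gives $A=\begin{pmatrix}P&R\\R&P\end{pmatrix}$; it does not give $R=J-P$, which is what the theorem actually asserts and what your subsequent step (``$A$ restricts to $J_{n/2}$ on the $(+1)$-eigenspace'') silently assumes. So the argument as sketched is circular. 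The PEO for $\overline{Q}$ likewise follows directly from the monotonicity of the run endpoints $a_i$ in Theorem~\ref{thm:operation-run}: the later $\overline{Q}$-neighbours of each $k\le n/2$ form an interval $\{k+1,\dots,m_k\}$ with $m_k$ non-decreasing, hence a clique, so confining the $v_i$ to a half-plane first is unneeded.
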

\begin{proof}
    Given $k \le n/2$, if vertex $k$ has neighbours $\{j, j+1, \dots, n/2\} \subset \{1, 2, \dots, n/2\}$, then its neighbours in $\{n/2+1, n/2+2, \dots, n\}$ are necessarily $\{n/2+1, \dots, n/2+j-1\}$ by Theorem \ref{thm:operation-run} and degree $n/2$. Identically, if $k$ has neighbours $\{1, 2, \dots, j\}$ for $j < k \le n/2$, then its other neighbours must be $\{n/2+j+1, \dots, n\}$. Thus, if the top-left $\frac{n}{2} \times \frac{n}{2}$ sub-matrix is $Q$, the top-right must be $J-Q$. $Q$ is symmetric hence so is $J-Q$, and by $n/2$-regularity we can then fill out the rest of $A(G)$ as shown. 

    By virtue of Theorem \ref{thm:operation-run}, the complement of $Q$ has a perfect elimination ordering. 
\end{proof}

\begin{cor}\label{cor:invariant-spectrum}
    Suppose $G = G^*$ and $G$ is $n/2$-regular. Then for $Q$ constructed as in Theorem \ref{thm:invariant-symmetry}, the spectrum of $G$ is the union of the spectra of $J$ and $2Q-J$. 
\end{cor}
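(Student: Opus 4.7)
The plan is to exploit the block symmetric structure of $A(G)$. Writing $A(G) = \begin{pmatrix} Q & J-Q \\ J-Q & Q \end{pmatrix}$, I recognise this as a block matrix of the form $M = \begin{pmatrix} A & B \\ B & A \end{pmatrix}$ where $A$ and $B$ are symmetric $\frac{n}{2} \times \frac{n}{2}$ matrices (here $A = Q$ and $B = J-Q$, with both halves necessarily commuting since $Q(J-Q)$ involves $J$ which commutes with any symmetric matrix whose row sums are constant — and in fact, we don't even need commutation for this argument).

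The standard trick is to conjugate by the orthogonal matrix
\[U = \frac{1}{\sqrt{2}} \begin{pmatrix} I_{n/2} & I_{n/2} \\ I_{n/2} & -I_{n/2} \end{pmatrix},\]
which satisfies $U^T U = I_n$. A direct block multiplication gives
\[U^T M U = \begin{pmatrix} A+B & 0 \\ 0 & A-B \end{pmatrix},\]
so the spectrum of $M$ is the union (with multiplicity) of the spectra of $A+B$ and $A-B$. Substituting $A = Q$ and $B = J-Q$ gives $A+B = J$ and $A-B = 2Q-J$, yielding the claim.

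Equivalently, and perhaps cleaner to present, one can exhibit eigenvectors directly: if $Jv = \mu v$, then $\binom{v}{v}$ is an eigenvector of $A(G)$ with eigenvalue $\mu$ (since $Qv + (J-Q)v = Jv$); and if $(2Q-J)w = \nu w$, then $\binom{w}{-w}$ is an eigenvector of $A(G)$ with eigenvalue $\nu$ (since $Qw - (J-Q)w = (2Q-J)w$). Taking an orthonormal eigenbasis of each of $J$ and $2Q-J$ on $\mathbb{R}^{n/2}$ and combining as above produces $n$ orthogonal eigenvectors of $A(G)$, accounting for the entire spectrum.

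There is no substantive obstacle; this is a routine application of the eigendecomposition of symmetric block-circulant $2 \times 2$ block matrices. The only thing to verify carefully is that the two resulting families of eigenvectors are mutually orthogonal (which follows from $\binom{v}{v} \cdot \binom{w}{-w} = v \cdot w - v \cdot w = 0$) and span $\mathbb{R}^n$, which is immediate from dimension counting once $J$ and $2Q-J$ each yield an orthonormal basis on $\mathbb{R}^{n/2}$.
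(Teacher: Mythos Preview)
Your proof is correct and takes essentially the same approach as the paper: both exploit the block structure $\begin{pmatrix} A & B \\ B & A \end{pmatrix}$ to reduce the spectrum to that of $A+B = J$ and $A-B = 2Q-J$. The paper states this via the determinant identity $\det\begin{pmatrix} A-\lambda I & B \\ B & A-\lambda I \end{pmatrix} = \det(A+B-\lambda I)\det(A-B-\lambda I)$, while you present it via the orthogonal conjugation that underlies that identity (and additionally spell out the eigenvector correspondence), but these are the same argument.
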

\begin{proof}
    We have
    \[\det(A-\lambda I) = \det \begin{pmatrix}
        Q - \lambda I & J - Q \\
        J - Q & Q - \lambda I
    \end{pmatrix} = \det((Q - \lambda I) + (J - Q)) \det((Q - \lambda I) - (J-Q)),\]
    which reduces to $\det(J - \lambda I)\det((2Q-J)-\lambda I)$. 
\end{proof}

We now fix the choice of rotation for further control over $Q$. 

\begin{thm}\label{thm:invariant-bipartite}
    Suppose $G = G^*$ and $G$ is $n/2$-regular. Then there exists a \purp{canonical} rotation of the labelling such that $Q$ is the adjacency matrix for a graph on $n/2$ vertices which satisfies the following property. The vertices of $Q$ can be partitioned into \purp{front}, non-empty \purp{middle}, \purp{back} where there are only edges between the front and back, and the middle consists of isolated vertices.
\end{thm}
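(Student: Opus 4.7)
The plan is to use the geometric $\mathbb{R}^2$-embedding from Theorem \ref{thm:operation-run} together with the $n/2$-regularity of $G$ to identify a vertex whose ``non-neighbour semicircle'' has exactly $n/2$ elements, and then to use that semicircle as the first half. We may place the $v_i$ at angles $\theta_1 \le \theta_2 \le \dots \le \theta_n$ in anti-clockwise order so that $i \sim j$ in $G$ iff the circular distance between $\theta_i$ and $\theta_j$ exceeds $\pi/2$.

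First I would fix any vertex $v_k$ and set $A_k := \{j : \mathrm{dist}(\theta_j, \theta_k) \le \pi/2\}$. A vertex $v_j$ lies in $A_k$ precisely when either $j = k$ or $v_j$ is a non-neighbour of $v_k$, so by $n/2$-regularity, $|A_k| = 1 + (n/2 - 1) = n/2$. Because the angles are cyclically ordered, $A_k$ is a run of $n/2$ consecutive labels. I would take the \emph{canonical} rotation to be the cyclic shift of the Theorem \ref{thm:operation-run} labelling that sends $A_k$ to $\{1, 2, \dots, n/2\}$; Theorem \ref{thm:invariant-symmetry} applied to this rotation then says that the top-left block $Q$ of $A(G)$ is the adjacency matrix of the subgraph induced on $A_k$.

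With this rotation in hand I would set $\mathrm{middle} := \{j \in A_k : \theta_j = \theta_k\}$, $\mathrm{front} := \{j \in A_k : \theta_j < \theta_k\}$, and $\mathrm{back} := \{j \in A_k : \theta_j > \theta_k\}$, which clearly partition $A_k$. Three claims remain, each immediate from the geometry. First, $v_k$'s neighbours in $G$ are the complement of $A_k$ (i.e.\ the second half), so $v_k$ has no neighbours in $Q$; any further vertex at angle $\theta_k$ is likewise isolated in $Q$, so the middle is non-empty and consists of isolated vertices. Second, two front vertices both lie in the half-open arc $[\theta_k - \pi/2, \theta_k)$, of angular span strictly less than $\pi/2$, so they are within $\pi/2$ of each other and hence non-adjacent in $G$; this makes front an independent set in $Q$, and by symmetry the same holds for back. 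Combining these, every edge of $Q$ joins front to back, as required.

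I do not foresee a serious obstacle: the whole argument hinges on the observation that $n/2$-regularity pins the cardinality of the non-neighbour arc to exactly $n/2$, precisely the size demanded by Theorem \ref{thm:invariant-symmetry}. The only delicate bookkeeping is in the measure-zero cases of repeated angles or of vertices lying at exactly $\theta_k \pm \pi/2$, both of which are absorbed by the above conventions (ties at $\theta_k$ go into middle; boundary vertices remain in $A_k$ and are assigned to front or back by any consistent tie-breaking rule on the cyclic order).
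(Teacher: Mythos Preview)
Your argument is correct and follows the same geometric idea as the paper: choose a vertex $v_k$, take its closed non-neighbourhood arc as the first $n/2$ labels, and split that arc into front/middle/back by angular position relative to $\theta_k$. The one difference is that the paper first establishes the antipodal identity $v_i = -v_{i+n/2}$ (from the eigenvector equations and the block form of $A(G)$) and uses it to count vectors in a half-plane, whereas you read $|A_k| = n/2$ directly off the $n/2$-regularity; your shortcut is legitimate for this theorem, though the antipodal identity is reused in the paper's subsequent Type~1/Type~2 eigenvector analysis, so it is not wasted work there. One small slip: the arc $[\theta_k - \pi/2,\theta_k)$ has angular span exactly $\pi/2$, not strictly less, but since non-adjacency in $G^*$ is the closed condition $v_i\cdot v_j \ge 0$ (angle $\le \pi/2$), your conclusion that the front is independent still holds.
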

\begin{proof}
    By regularity we have $\sum v_i = \sum \binom{x_i}{y_i} = 0$ since $x,y$ are orthogonal to $j$. Noting that the lower half of $A(G)$ is the complement of the upper half, for $i \le n/2$ we have $\lambda x_i = \sum_{j \sim i} x_j = \sum_{j = 1}^n x_j - \sum_{j \not \sim i} x_j = 0 - \sum_{j \sim (i + n/2)} x_j = -\lambda x_{i + n/2}$, hence $x_i = -x_{i+n/2}$ and identically $y_i = -y_{i+n/2}$, so $v_i = -v_{i+n/2}$. 

    Each vector has a corresponding reflection in the origin, which means that for every angle $\theta$, taking all vectors between $[\theta, \theta+\pi)$ gives exactly $n/2$ vectors. Note that if $v_i = v_j$, then they lie in the same neighbourhoods and non-neighbourhoods. Thus, supposing that the $n$ vectors yield exactly $k$ distinct vectors, there are exactly $k$ possible sets of $n/2$ consecutive vectors that can be non-neighbourhoods, as they are uniquely determined by the first vector going anti-clockwise. 

    Then the $n/2$ vectors in $[0, \pi)$ are the non-neighbours of some vector $v_{k+1}$ in the top half, and we rotate to force these to the top $n/2$ vertices of $G$. Let the middle consist of all such isolated vectors in this top half, necessarily non-empty (since $v_{k+1}$ exists) and all equal. Then going anti-clockwise in $[0, \pi)$, refer to the front as the vectors before these and the back the vectors after. Vectors in the front lie on the same side of $v_{k+1}$ hence are pairwise non-neighbours, similarly with the back.
\end{proof}

\begin{cor}\label{cor:invariant-degrees}
    Suppose $G = G^*$ and $G$ is $n/2$-regular. Let $Q$ be constructed from its canonical rotation, then the degrees of vertices in the front are decreasing, and in the back are increasing.
\end{cor}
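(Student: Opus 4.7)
The plan is to unpack the canonical rotation from Theorem \ref{thm:invariant-bipartite} geometrically and read off the degrees directly from the angular positions of the top-half vectors. After the canonical rotation, the top $n/2$ vectors $v_1, \ldots, v_{n/2}$ have arguments $\theta_1 \le \cdots \le \theta_{n/2}$ all lying in $[0, \pi)$, so any two of them subtend an angle equal to the absolute difference of their arguments, which is strictly less than $\pi$. Hence the adjacency rule $v_i \cdot v_j < 0$ defining $Q$ simplifies to $|\theta_i - \theta_j| > \pi/2$.

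First I would recall from the proof of Theorem \ref{thm:invariant-bipartite} that the middle vectors all share a single argument $\theta_m$, so the front indices are exactly those with $\theta_i < \theta_m$ and the back indices those with $\theta_i > \theta_m$. Front and back are each independent sets in $Q$, so every edge runs between them. A front vertex at angle $\theta_i$ thus has $Q$-degree equal to the number of back vertices $j$ with $\theta_j > \theta_i + \pi/2$, while a back vertex at angle $\theta_j$ has $Q$-degree equal to the number of front vertices $i$ with $\theta_i < \theta_j - \pi/2$.

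Monotonicity follows immediately. As one walks anti-clockwise through the front indices the angle $\theta_i$ weakly increases, the threshold $\theta_i + \pi/2$ increases, and fewer back vertices exceed it, so the degrees are non-increasing. Symmetrically, walking anti-clockwise through the back the angle $\theta_j$ increases, the threshold $\theta_j - \pi/2$ increases, and more front vertices fall below it, so the degrees are non-decreasing. I do not anticipate a real obstacle here: essentially all the work is already accomplished by Theorem \ref{thm:invariant-bipartite}, and this corollary is just one more structural consequence of the sorted-by-angle arrangement. The only minor subtlety is that several vertices can share the same argument, in which case they have identical degrees, but this is consistent with the non-strict reading of ``decreasing'' and ``increasing''.
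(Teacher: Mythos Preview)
Your argument is correct. The paper's proof is a one-liner that cites the perfect elimination ordering of the complement of $Q$ (Theorem~\ref{thm:invariant-symmetry}) together with the front/middle/back bipartition (Theorem~\ref{thm:invariant-bipartite}); unpacked, this is the observation that the run structure of Theorem~\ref{thm:operation-run} makes each $Q$-neighbourhood an interval whose endpoints move monotonically with the vertex index, so that for a front vertex the interval sits at the end of the back and shrinks, and for a back vertex it sits at the start of the front and grows. You instead return to the angular picture underlying those theorems and read off the degrees directly as the counts $\#\{j : \theta_j > \theta_i + \pi/2\}$ and $\#\{i : \theta_i < \theta_j - \pi/2\}$. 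Both routes rest on the same geometric setup and the same monotonicity of the sorted arguments; yours is more self-contained and makes the mechanism explicit, while the paper's is terser because the interval structure has already been packaged into the perfect-elimination statement.
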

\begin{proof}
    Follows from the complement having a perfect elimination ordering in Theorem \ref{thm:invariant-symmetry} and the bipartition in Theorem \ref{thm:invariant-bipartite}.
\end{proof}

In addition to restrictions on the structure of the graph, we also find order within the eigenvector components, which we split into two types. 

\begin{thm}\label{thm:2Q-J}
    Suppose $G = G^*$ and $G$ is $n/2$-regular. Let $Q$ be constructed from its canonical rotation, then the last two eigenvectors of $2Q-J$ are of two different types:
    \begin{outline}[enumerate]
        \1 \purp{Type 1}: The front entries are increasing up to the middle and decreasing after, with all entries non-negative. 
        \1 \purp{Type 2}: The entries are always decreasing, and positive in the front, non-positive in the back. 
    \end{outline}
\end{thm}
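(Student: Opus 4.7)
My plan is to identify the last two eigenvectors of $2Q-J$ with the top-half restrictions of the eigenvectors $x,y$ of $G$ used in Operation $^*$, read off the sign pattern from the planar geometry fixed by the canonical rotation, and then upgrade the sign pattern to monotonicity using the eigenvalue equation for $2Q-J$.

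First I would record the reduction. Using the block form
\[A(G)=\begin{pmatrix}Q & J-Q \\ J-Q & Q\end{pmatrix}\]
and the parity $x_{i+n/2}=-x_i$ established in the proof of Theorem \ref{thm:invariant-bipartite} (which requires only $\lambda\neq 0$), expanding $\lambda x_i=\sum_{j\sim i}x_j$ gives $\lambda u_i = 2(Qu)_i - s_u$, where $u=(x_1,\dots,x_{n/2})^T$ and $s_u=\mathbf{1}^T u$. So $u$ is an eigenvector of $2Q-J$ at eigenvalue $\lambda$. Since $\operatorname{tr}(2Q-J)=-n/2<0$ the bottom eigenvalue of $2Q-J$ is negative, and since $\lambda_{n-1}(G)=0$ would give $(\lambda_{n-1}+\lambda_n)/n\ge -1/2>-\sqrt{2}/2$ (making Theorem \ref{thm:n/2-regular} trivial in that case), we may assume $\lambda_{n-1},\lambda_n<0$. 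Consequently the restrictions $u,w$ of $x,y$ are precisely the two smallest-eigenvalue eigenvectors of $2Q-J$.

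Next I would translate the canonical rotation into coordinates. By construction the middle vertices lie at angle $\pi/2$, the front at angles in $[0,\pi/2)$, and the back at angles in $(\pi/2,\pi)$, so
\[u_i=\|v_i\|\cos\theta_i, \qquad w_i=\|v_i\|\sin\theta_i.\]
Since $n/2$-regularity rules out $v_i=0$ (any such vertex would be isolated in $G^*=G$), the signs pop out immediately: $u$ is strictly positive on the front, zero on the middle, strictly negative on the back, whereas $w$ is non-negative everywhere. This already pairs $u$ with Type 2 and $w$ with Type 1. For monotonicity, I would first evaluate $\lambda u_i=2(Qu)_i-s$ at a middle vertex: since its $Q$-degree is zero this gives $u_{\text{mid}}=-s/\lambda$, forcing $s_u=0$ and $s_w>0$. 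Then for consecutive front vertices $i<i+1$, the difference $(Qu)_i-(Qu)_{i+1}$ is exactly the sum of $u$-entries over the back vertices whose angles lie in the window $(\theta_i+\pi/2,\theta_{i+1}+\pi/2]$; those entries are negative for $u$ and positive for $w$, so dividing the eigenvalue equation by $\lambda<0$ flips the inequality and delivers $u_i\ge u_{i+1}$ and $w_i\le w_{i+1}$ on the front. The symmetric back computation, using the non-decreasing degree sequence $M(i)$ from Corollary \ref{cor:invariant-degrees}, gives $u_i\ge u_{i+1}$ and $w_i\ge w_{i+1}$ on the back. A final comparison of the endpoint front/back equations to $\lambda w_{\text{mid}}=-s_w$ shows that the extremal front and back $w$-values are strictly less than $w_{\text{mid}}$, completing the peak shape of Type 1.

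The step I expect to be the most delicate is the bookkeeping for $(Qu)_i-(Qu)_{i+1}$: one must combine the monotone degree structure from Corollary \ref{cor:invariant-degrees} with the bipartite front/back structure of $Q$ to argue the subtracted set is a contiguous block of back vertices of the claimed sign, with corresponding care when two consecutive front vertices share the same $N$-value. A secondary subtlety is the non-degenerate case $\lambda_{n-1}\neq\lambda_n$, where only sign flips are available in the basis $\{x,y\}$; here the requirement that the middle lie on the positive $y$-axis pins down the signs uniquely, so the Type 1 / Type 2 split survives the loss of full rotational freedom in the eigenspace.
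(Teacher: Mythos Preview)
Your overall strategy---restrict $x,y$ to the top half to get eigenvectors of $2Q-J$, read off signs from the planar picture, then difference the eigenvalue equation for monotonicity---is exactly the paper's. The monotonicity step matches in particular: the paper also computes $\lambda(x_{i+1}-x_i)$ as (twice) a signed sum over the vertices that enter or leave the neighbourhood, and determines the sign of that sum from which half-plane those vertices fall in.

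There is, however, a genuine gap in your sign step. You assert that ``by construction the middle vertices lie at angle $\pi/2$, the front at angles in $[0,\pi/2)$, and the back at angles in $(\pi/2,\pi)$.'' The canonical rotation of Theorem~\ref{thm:invariant-bipartite} is a rotation of the \emph{labelling}, not of the $(x,y)$-plane: it only arranges that the top $n/2$ vectors are those with argument in $[0,\pi)$ and that the middle cluster is the one whose $G$-non-neighbourhood is exactly that set. The middle vector $v_{k+1}$ need not sit on the $y$-axis---in the paper's example $G_4$ the middle $v_3=(0.26,0.5)$ has argument about $62.5^{\circ}$. Consequently your claims $u_{\mathrm{mid}}=0$ and $s_u=0$ are false in general (for $G_4$ one gets $s_u\approx 0.86$); fortunately you never actually use them downstream, so this is a cosmetic error rather than a fatal one.

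What does need repair is the justification of ``front $\subset [0,\pi/2)$'' and ``back $\subset [\pi/2,\pi)$''. These are true, but they are not immediate from the construction; they require the neighbourhood-collision argument the paper gives. Namely, if some back vertex $v_i$ had $x_i>0$, then $v_i$ would be non-adjacent to $v_1$ (both in the closed first quadrant) and, since $\theta_i>\theta_{\mathrm{mid}}$ while all top-half angles lie within $\pi/2$ of $\theta_{\mathrm{mid}}$, also non-adjacent to $v_{n/2}$; by $n/2$-regularity this forces $v_i$'s non-neighbourhood to be exactly $\{1,\dots,n/2\}$, so $v_i$ and $v_{k+1}$ would have identical neighbourhoods and hence coincide as vectors---contradicting that back and middle are distinct clusters. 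The symmetric argument handles the front. Once you insert this, your $(Qu)_i-(Qu)_{i+1}$ computation (and the boundary comparison with $\lambda w_{\mathrm{mid}}=-s_w$) goes through as written and recovers the paper's proof.
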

\begin{proof}
    For $i \le n/2$, 
    \[\lambda_{n-1} x_i = \sum_{j \sim i} x_j = \sum_{\substack{j \sim i \\ j \le n/2}} x_j + \sum_{\substack{j \sim i \\ j > n/2}} x_j = \sum_{\substack{j \sim i \\ j \le n/2}} x_j - \sum_{\substack{j \not \sim i \\ j \le n/2}} x_j = \sum_{j = 1}^{n/2} (2Q-J)_{ij} x_j.\]
    
    In particular, the top half of entries of $x,y$ are still eigenvectors of $2Q-J$ with corresponding eigenvalues. 

    Let $v_{k+1}$ lie in the middle. We claim that $x$ is Type 2 and $y$ is Type 1. First we prove the signs. As all vectors have argument $[0, \pi)$ we have all $y_i$ non-negative. Suppose FTSOC there exists a back vector with $x_i$ positive. Then we claim the neighbourhoods of $v_i$ and $v_{k+1}$ are the same, contradiction. Indeed, both are non-adjacent to $v_1$ since they lie in the same quadrant of the Cartesian plane. But $v_{k+1}$ is non-adjacent to $v_n$ and $v_i$ is anti-clockwise of $v_{k+1}$, hence $v_i$ is also non-adjacent to $v_n$ and this uniquely determines both their neighbourhoods. Thus, $x_i$ is non-positive. Analogously, for front vectors we get $x_i$ positive, where zero can't occur since the $\pi$ endpoint is open.

    Now we consider the piecewise monotonicity. Consider going anti-clockwise from a vector $v_i$ to the next vector $v_{i+1}$ in the original space of $n$ vectors. If the neighbourhood changes, then by the bijection of distinct $n/2$ runs to distinct vectors, the neighbourhood changes by the addition of all vectors equal to some $v_j$ and the removal of all vectors equal to $v_{j+n/2} = -v_j$. If two distinct vectors are included or excluded, then these will have the same neighbourhood, contradiction. 

    In particular, we get that $\lambda_{n-1}x_{i+1} - \lambda_{n-1}x_i = 2mx_j$ and $\lambda_ny_{i+1} - \lambda_ny_i = 2my_j$ for some positive integer $m$, and $v_j$ a vector at most $\pi/2$ radians clockwise from $v_i$ but at least $\pi/2$ radians clockwise from $v_{i+1}$. For a front vector $v_i$, since the middle is non-adjacent to all vectors with argument $[0, \pi)$, we must then have $x_j \ge 0$ and $y_j \le 0$ whence $x_{i+1} - x_i \le 0$ and $y_{i+1} - y_i \ge 0$. For the final middle vector and all but the final back vector $v_i$, we must have $v_j$ a front vector, hence $x_j \ge 0, y_j \ge 0$ which implies $x_{i+1} - x_i \le 0, y_{i+1} - y_i \le 0$. 
\end{proof}

To illustrate the structure, we provide two examples. First, consider the following 4-regular $G_4 = G_4^*$ on 8 vertices. We call the graph a \purp{block construction} since $Q$ (constructed from its canonical rotation) is complete bipartite between the front and back.  

\begin{figure}[H]
    \centering
    \begin{tabular}{cccc}
        \includegraphics[width = 0.23\textwidth]{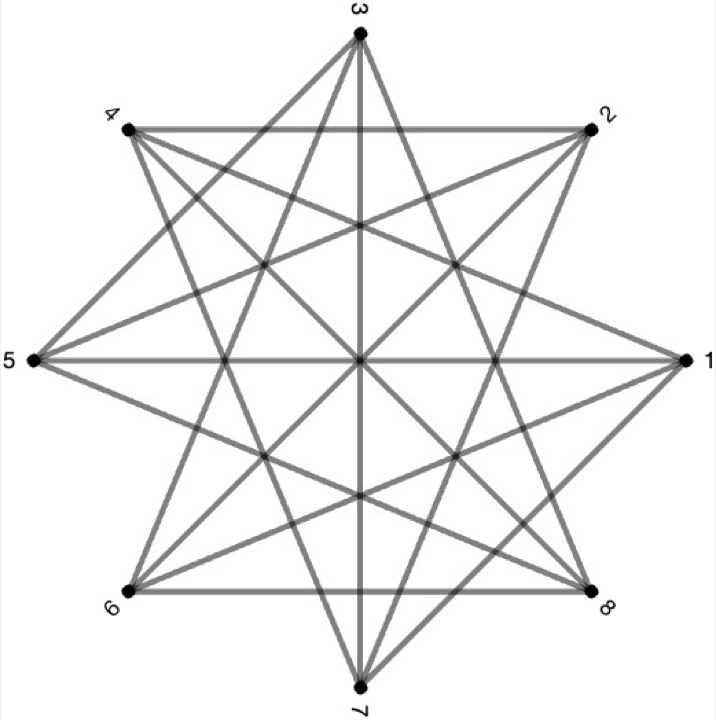} & \includegraphics[width = 0.23\textwidth]{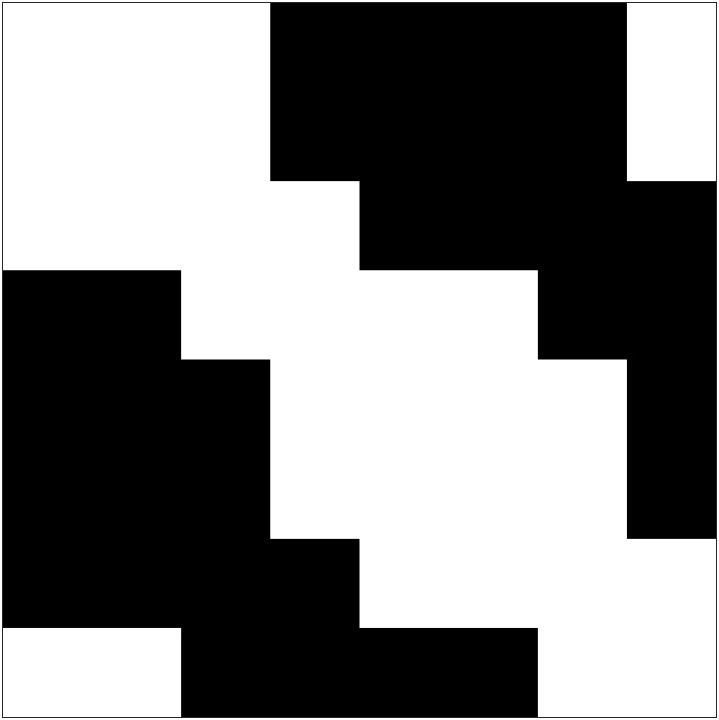} & \includegraphics[width = 0.23\textwidth]{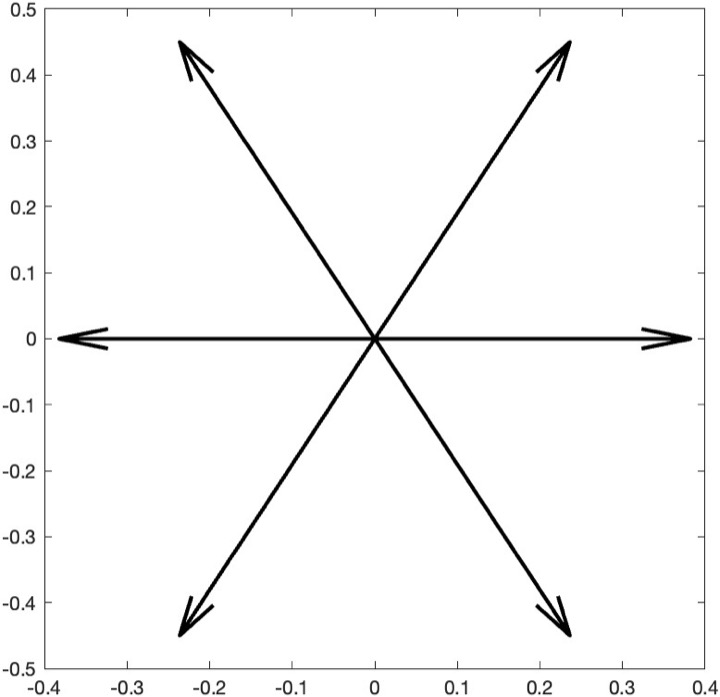} & \includegraphics[width = 0.23\textwidth]{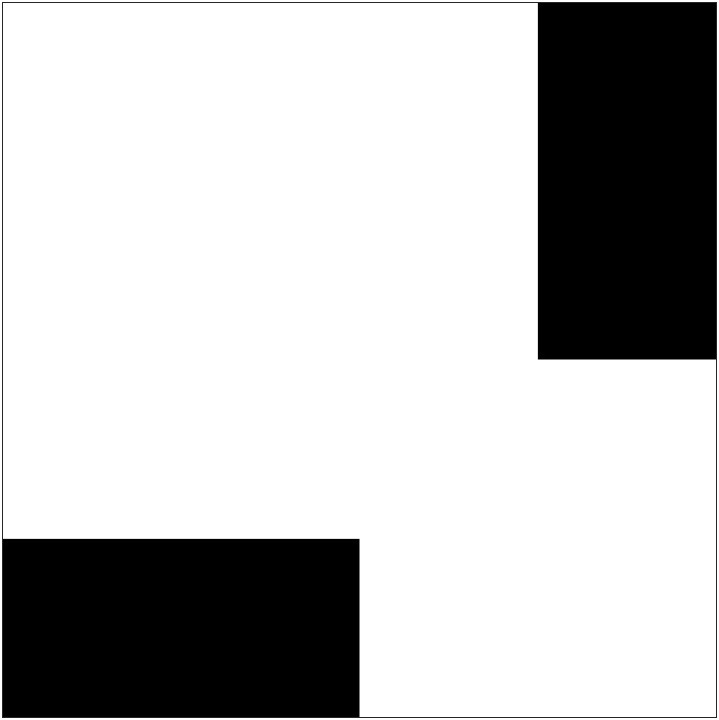} \\
    \end{tabular}
    \caption{From left to right: $G_4$, $A(G_4)$, the resultant $\mathbb{R}^2$ vectors, $Q$.}
    \label{fig:G4}
\end{figure}

In anti-clockwise order, the vectors we obtain are approximately:
\[v_1 = (0.43, 0), \; v_2 = (0.43, 0), \; v_3 = (0.26, 0.5), \; v_4 = (-0.26, 0.5),\]
\[v_5 = (-0.43, 0), \; v_6 = (-0.43, 0), \; v_7 = (-0.26, -0.5), \; v_8 = (0.26, -0.5),\]
hence the last two eigenvectors of $2Q-J$ are $x = (0.43, 0.43, 0.26, -0.26)$ of Type 2 and $y = (0, 0, 0.5, 0.5)$ of Type 1. These correspond to eigenvalues $\approx -3.2361$ and $-2$ respectively, which yields $\frac{\lambda_{n-1} + \lambda_n}{n} \approx -0.6545$. Furthermore, note that the front consists of $\{v_1, v_2\}$, the middle is $\{v_3\}$ and the back is $\{v_4\}$. 

We also consider a non-block 6-regular $G_6 = G_6^*$ on 12 vertices. In anti-clockwise worder, the vectors obtained are approximately:
\[v_1 = (0.37, 0), \; v_2 = (0.37, 0), \; v_3 = (0.30, 0.26), \; v_4 = (0.16, 0.43),\]
\[v_5 = (-0.16, 0.43), \; v_6 = (-0.30, 0.26), \; v_7 = (-0.37, 0), \; v_8 = (-0.37, 0),\]
\[v_9 = (-0.30, -0.26), \; v_{10} = (-0.16, -0.43), \; v_{11} = (0.16, -0.43), \; v_{12} = (0.30, -0.26).\]

\begin{figure}[H]
    \centering
    \begin{tabular}{cccc}
        \includegraphics[width = 0.23\textwidth]{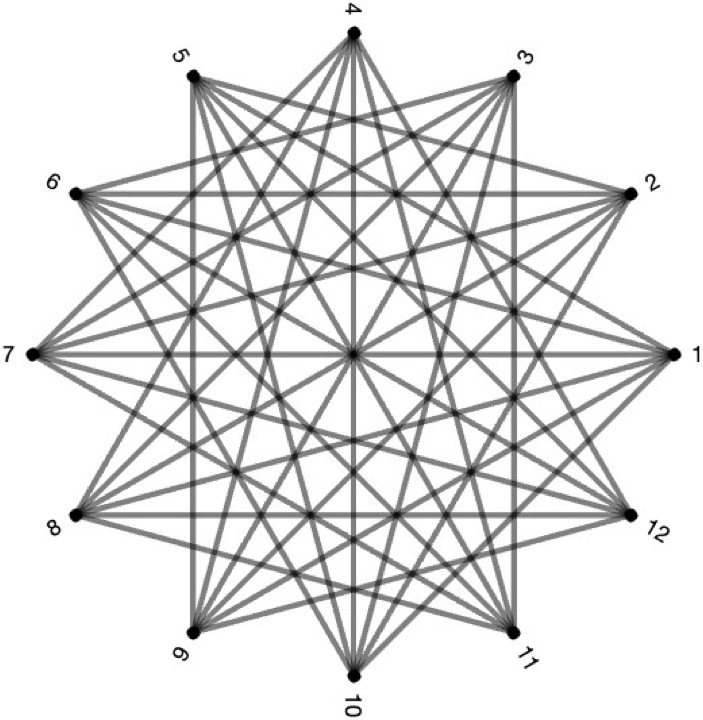} & \includegraphics[width = 0.23\textwidth]{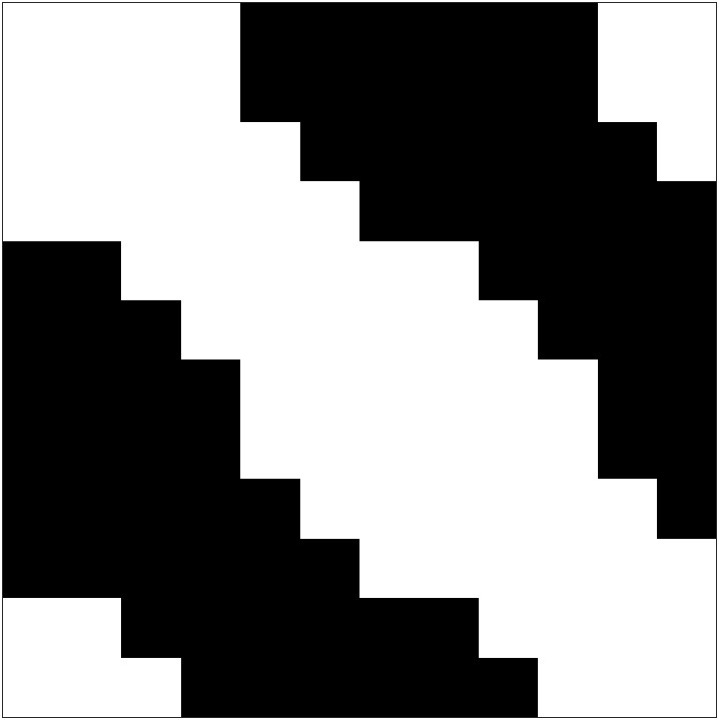} & \includegraphics[width = 0.23\textwidth]{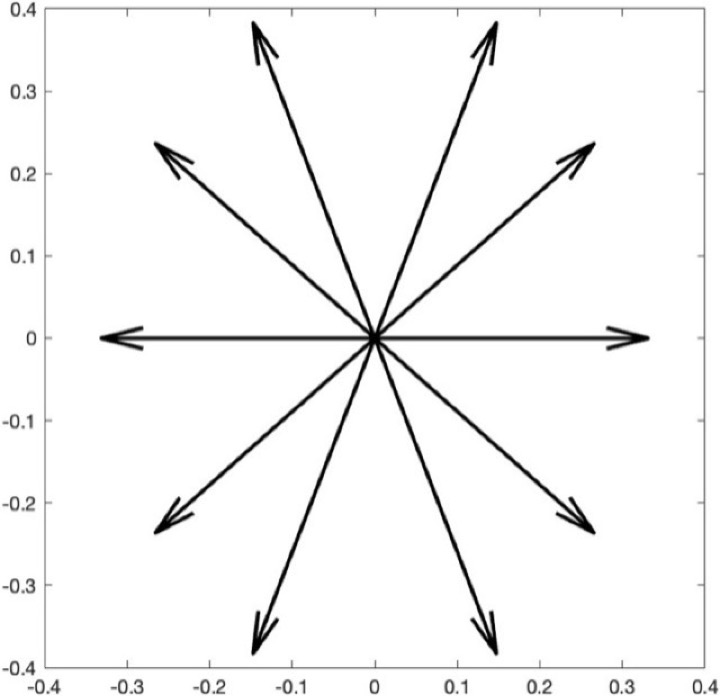} & \includegraphics[width = 0.23\textwidth]{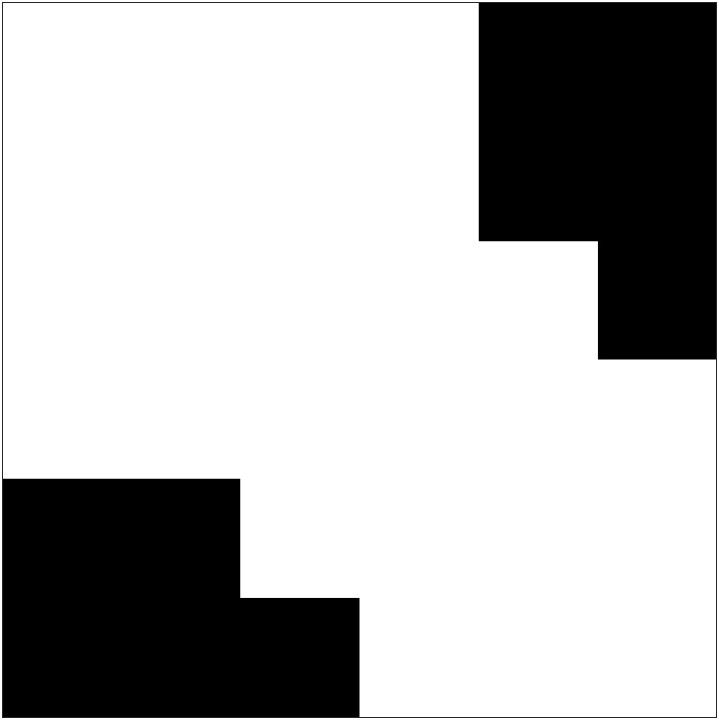} \\
    \end{tabular}
    \caption{From left to right: $G_6$, $A(G_6)$, the resultant $\mathbb{R}^2$ vectors, $Q$.}
    \label{fig:G6}
\end{figure}

Here, the last two eigenvectors of $2Q-J$ are $x = (0.37,0.37,0.30,0.16,-0.16,-0.30)$ and $y = (0, 0, 0.26, 0.43, 0.43, 0.26)$, with front $\{v_1, v_2, v_3\}$, middle $\{v_4\}$, back $\{v_5, v_6\}$. The eigenvalues are $\approx -4.4940$ and $\approx -3.2361$ respectively, giving $\frac{\lambda_{n-1} + \lambda_n}{n} \approx -0.6442$. 

\subsection{Proof of Theorem \ref{thm:n/2-regular}}

The spectrum of $J$ is $n/2, 0, \dots, 0$, all non-negative eigenvalues, hence by Corollary \ref{cor:invariant-spectrum}, it suffices to prove that $\inf\{(\mu_{n/2-1}(2Q-J) + \mu_{n/2}(2Q-J))/n\} > -\frac{\sqrt{2}}{2}$. Henceforth we work with $M = 2Q-J$ which adopts the same structure as $Q$ in Theorem \ref{thm:2Q-J}. For simplicity, we suppose $M$ now has $n$ vertices.

We adopt the same notation of vertex partitions and eigenvector Types. Slightly abusing notation, we still refer to $M_{ab} = +1$ as an edge and $M_{ab} = -1$ a non-edge.

We make most of the progress through the following theorem. Although we already knew the presented bound, we determine the equality case and later prove that it cannot occur even as a limit. 

\begin{thm}\label{thm:almost-n/2-regular}
    Suppose $G = G^*$ and $G$ is $n/2$-regular. Let $Q$ be constructed from its canonical rotation, then
    \[\frac{\mu_{n-1}(2Q-J) + \mu_n(2Q-J)}{n} \ge -\sqrt{2},\]
    with equality possible only if the front and back have size $n/2$ and the graph has $\frac{n^2}{8}$ edges.
\end{thm}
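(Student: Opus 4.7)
The bound itself comes from applying the QM--AM inequality to the trace identity $\operatorname{tr}(M^2) = n^2$, which holds because every entry of $M = 2Q - J$ lies in $\{-1,+1\}$. First I would note that $\mu_{n-1}^2 + \mu_n^2 \le \sum_{i=1}^n \mu_i^2 = n^2$, since every $\mu_i^2 \ge 0$, and then QM--AM gives $(\mu_{n-1}+\mu_n)^2 \le 2(\mu_{n-1}^2 + \mu_n^2) \le 2n^2$, yielding $\mu_{n-1} + \mu_n \ge -\sqrt{2}\,n$.

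To extract the equality characterization, I track the saturation conditions of both inequalities. QM--AM tightness requires $\mu_{n-1} = \mu_n = \mu$, and tightness of the sum-of-squares bound requires $\mu_i = 0$ for every $i \le n-2$. Together these say $M$ would have to be rank $2$ with double eigenvalue $\mu = -n/\sqrt{2}$, so $M = \mu(xx^T + yy^T)$ for some orthonormal pair $\{x,y\}$ spanning the last eigenspace. Setting $v_i = (x_i, y_i)$, the constraint $M_{ii} = -1$ forces $|v_i|^2 = \sqrt{2}/n$, while $|M_{ij}| = 1$ for $i \ne j$ forces $|\langle v_i, v_j\rangle| = |v_i||v_j|$, so all of the $v_i$ lie on a single line through the origin and the vertex set splits into two clusters $V^{\pm}$ pointing along $\pm \hat u$.

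The structural conclusions then follow from combining this geometric picture with Theorems \ref{thm:invariant-bipartite} and \ref{thm:2Q-J}. The Type 1 non-negativity of $y$ together with the collinearity forces the middle to be (asymptotically) empty, so $V^{\pm}$ align with the front $F$ and back $B$, while the Type 2 sign pattern of $x$ combined with orthogonality $x \perp y$ balances the clusters to give $|F| = |B| = n/2$. The edge count then drops out of comparing $\mathbf{1}^T M \mathbf{1} = 4e(Q) - n^2$ computed directly with $\mu \cdot ((\mathbf{1}^T x)^2 + (\mathbf{1}^T y)^2)$ obtained from the rank-$2$ decomposition, which pins down $|s|^2 = n/\sqrt{2}$ and hence $e(Q) = n^2/8$.

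The main obstacle I anticipate is that the equality case is itself internally inconsistent: summing $|v_i|^2 = \sqrt{2}/n$ over $i$ yields $\sqrt{2}$, while orthonormality demands $\|x\|^2 + \|y\|^2 = 2$. This shows exact equality is in fact unattainable and the inequality in the theorem is strict, but the structural conditions extracted above are precisely the necessary constraints on any asymptotically extremal sequence; handling that sequence's failure to exist is then deferred to the remainder of the subsection.
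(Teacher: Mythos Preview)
Your proof of the inequality via $\operatorname{tr}(M^2) = n^2$ and QM--AM is correct and far shorter than the paper's; indeed the paper invokes this same trace identity inside Intersection~Ia and again immediately after the theorem. Your observation that exact equality is impossible (either from $\sum_i |v_i|^2 = \sqrt{2} \neq 2$ or, more directly, from $\operatorname{tr}(M) = -n \neq -\sqrt{2}\,n$) is also correct and renders the ``only if'' clause vacuously true, so the theorem as literally stated is established. The paper, by contrast, spends five stages building an explicit framework of parameters $X, Y, Z, T$ and four families of inequalities (border, extrema, mean, smoothing), tracks the feasible region through phases, and isolates Intersection~IIb as the unique configuration touching $-\sqrt{2}$, with $X = Z = \tfrac12$, $T = \tfrac18$, and $a = c = \tfrac{\sqrt{2}}{2}$ for the Type~1 vector. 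What this buys is continuity: the paper can then argue that any near-extremal sequence forces near-tightness in specific smoothing and extrema inequalities, which is precisely what Claims~\ref{clm:degree} and~\ref{clm:small-box} exploit to control eigenvector components locally.

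The gap in your proposal is the final paragraph. You assert that your rank-$2$ analysis delivers ``precisely the necessary constraints on any asymptotically extremal sequence,'' but your extraction of $|F| = |B| = n/2$ and $e(Q) = n^2/8$ is carried out under the exactly-rank-$2$ hypothesis you have just shown to be contradictory; from a false premise any conditions follow, so nothing about near-extremal graphs has actually been pinned down. Robustifying to Frobenius error $o(n)$ may be feasible, but it is genuine work you have not done, and even a successful robustification would yield only $X, Z \to \tfrac12$ and $T \to \tfrac18$ --- not the finer information (near-equality in smoothing(c), hence bottom-of-front averages converging to~$1$) that the remainder of the subsection depends on. Your argument proves the theorem but cannot substitute for the paper's proof within the larger argument toward Theorem~\ref{thm:n/2-regular}.
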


Due to the length of the full proof, we introduce our notation (which we continue to adopt in the subsequent proof of Theorem \ref{thm:n/2-regular}) and the underlying framework in multiple stages before finishing the proof in the final stage.

The stages proceed as follows. 
\begin{outline}[enumerate]
    \1 In Stage 1, we set up the algebra, with elements inspired by Csikv\'{a}ri \cite{csikvari2009conjecture}. In particular, we consider the algebraic formulation of the statement and extract equations relating the front, middle and back to the eigenvalues. 
    \1 In Stage 2, we use these equations alongside Corollary \ref{cor:invariant-degrees} and Theorem \ref{thm:2Q-J} to form inequalities that restrict the feasible region of the eigenvalues, which will depend on a set of parameters and two variables. 
    \1 In Stage 3, to analyse the minimum values of the eigenvalues, we consider the boundaries of the feasible regions, which correspond to cases of equality in the inequalities. 
    \1 In Stage 4, we bound the minimum of the feasible region when it occurs at an intersection of two boundaries. Furthermore, we find that in order to obtain $\frac{\mu_{n-1} + \mu_n}{n} = -\sqrt{2}$ at an intersection, the parameters and variables must take a specific set of values.
    \1 In Stage 5, we identify all the families of Type 1 feasible regions that can occur as we vary a given parameter, which motivates the casework in Stage 4. We also consider a Type 2 feasible region to aid the casework in the final stage. 
    \1 In the final stage, we use the above framework to deal with each case in order. \\
\end{outline}

\textbf{Stage 1: Setup.} Suppose we have front vertices $\{1, 2, \dots, k\}$, middle vertices $\{k+1, k+2, \dots, k+l\}$, and back vertices $\{k+l+1, \dots, n\}$. For $a \le k$, let $d_a$ denote the number of $c \ge k+l+1$ where $M_{ac} = +1$ and analogously define $d_c$, using the bipartite property from Theorem \ref{thm:invariant-bipartite}. Furthermore, with Theorem \ref{cor:invariant-degrees}, we have $d_a$ decreasing and $d_c$ increasing. Finally, note that eigenvectors are constant in their middle $l$ components. 

Consider a general eigenvector $x$ with eigenvalue $\mu < 0$. Let $A = \sum_{a = 1}^k x_a$, $B = \sum_{b = k+1}^{k+l} x_b = lx_b$ and $C = \sum_{c = k+l+1}^n x_c$. We have for $a \le k$,
\begin{align*}
    \mu x_a &= - \sum_{a = 1}^k x_a - \sum_{b = k+1}^{k+l} x_b - \sum_{\substack{c \ge k+l+1 \\ a \not \sim c}} x_c + \sum_{\substack{c \ge k+l+1 \\ a \sim c}} x_c \\
    &= -A - B- \sum_{\substack{c \ge k+l+1 \\ a \not \sim c}} x_c + \sum_{\substack{c \ge k+l+1 \\ a \sim c}} x_c \\
    \Rightarrow \mu \sum_{a = 1}^k x_a &= -kA - kB - \sum_{a = 1}^k \sum_{\substack{c \ge k+l+1 \\ a \not \sim c}} x_c + \sum_{a = 1}^k \sum_{\substack{c \ge k+l+1 \\ a \sim c}} x_c \\
    \Rightarrow \mu A + kA + kB &= \sum_{c = k+l+1}^n x_c(d_c - (k-d_c)) = -kC + 2\sum_{c = k+l+1}^n d_cx_c \\
    \Rightarrow \mu A + k(A+B+C) &= 2 \sum_{c = k+l+1}^n d_cx_c.
\end{align*}

We know the middle is non-empty, hence $l > 0$. By construction of the partition, we must have $\mu x_1 = - A - B + C$ and $\mu x_b = - A - B - C$. Hence, we get
\[\mu \left(A - k \frac{B}{l}\right) = 2 \sum_{c = k+l+1}^n d_cx_c, \quad \mu \left(C - (n-k-l) \frac{B}{l}\right) = 2\sum_{a = 1}^k d_ax_a\]
where the latter follows by symmetry. Now let $t = \sum_{a = 1}^k d_a = \sum_{c = k+l+1}^n d_c$, and WLOG assume that $B \ge 0$. If $B < 0$, by Theorem \ref{thm:2Q-J}, $x$ must be Type 2 then reversing the order of the vertices and flipping the sign of $x$ gives $B > 0$. 

We first consider the $B = 0$ case, where $x$ still has to be Type 2. We have $B = 0$ which gives $x_b = 0$ and $A + C = 0$, then since $x_a \le x_1$ for $a \le k$, $\mu_2 C = 2\sum_{a = 1}^k d_a x_a \le 2x_1 \sum_{a=1}^k d_a = 4\frac{tC}{\mu_2}$. As $C \le 0$, we get $\mu_2 \ge \frac{4t}{\mu_2}$ which gives $\mu^2 \le 4t$ hence $\mu_2 \ge -2\sqrt{t}$. 

Henceforth assume $B > 0$. We also have $k, n-k-l \neq 0$ otherwise there are no edges and $M = -J$ with $\mu_{n-1} + \mu_n = -n > -\sqrt{2}n$. Let $A = \frac{kB}{l}a$ and $C = \frac{(n-k-l)B}{l}c$. Then our equations become:

\[x_1 = \frac{1}{\mu} \frac{B}{l} (-ka - l + (n-k-l)c), \quad -\mu = ka + l + (n-k-l)c,\]
\[\mu(a-1) = 2\frac{l}{kB} \sum_{c = k+l+1}^n d_cx_c, \quad \mu(c-1) = 2\frac{l}{(n-k-l)B} \sum_{a = 1}^k d_ax_a.\]
\vspace{3mm}

\textbf{Stage 2: Inequalities.} Given fixed $k, l, n, t$ we now consider the feasible region of the plane $-\mu = f(a, c) = ka + l + (n-k-l)c$. We impose four different types of inequalities. Let $\mu_1, \mu_2$ refer to Type 1, Type 2 eigenvalues respectively. 
\begin{outline}
    \1 \purp{Border}: For Type 1, by Theorem \ref{thm:2Q-J} we get $0 \le a, c \le 1$. For Type 2, we get $a \ge 1$ and $c \le 0$. 
    \1 \purp{Extrema}: For Type 2, we have $x_a \le x_1$ for $a \le k$, hence 
    \[\mu_2(c-1) \le 2\frac{l}{(n-k-l)B} x_1 \sum_{a = 1}^k d_a = 2\frac{t}{\mu_2(n-k-l)}(-ka - l + (n-k-l)c).\]
    \1 \purp{Mean}: We apply the Chebyshev sum inequality. For Type 1, note that $d_a$ is decreasing and $x_a$ is increasing. Thus, $(d_a), (x_a)$ are oppositely oriented, hence $\sum_{a=1}^k d_ax_a \le \frac1k \sum_{a=1}^k d_a \sum_{a=1}^k x_a = \frac{t}{k}A$. Thus, 
    \[\mu_1(c-1) \le 2\frac{t}{n-k-l}a.\]
    We also have $d_c$ increasing and $x_c$ decreasing, hence analogously
    \[\mu_1(a-1) \le 2\frac{t}{k}c.\]

    For Type 2, we have $d_a$ and $x_a$ decreasing, hence we obtain the opposite direction $\mu_2(c-1) \ge 2\frac{t}{n-k-l}a$. We still have $d_c$ increasing and $x_c$ decreasing, so again $\mu_2(a-1) \le 2\frac{t}{k}c$. 
    \1 \purp{Smoothing:} For Type 1, we perform smoothing given fixed $t, A$, where each $d_a \le n-k-l$. Indeed, given fixed $(x_a)_{a=1}^k$ as well, $\sum_{a=1}^k d_ax_a$ is lowest when $d_a = n-k-l$ for the smallest values of $a$. By considering the continuous generalisation, the minimum further occurs when $x_a = x_b$ for those with $d_a = 0$, since $A$ is fixed. Algebraically, this gives $d_a = n-k-l$ for $a \le \frac{t}{n-k-l}$ and $x_a = x_b$ for $a > \frac{t}{n-k-l}$. We get:
    \[\sum_{a=1}^k d_ax_a \ge (A - \left(k - \frac{t}{n-k-l}\right) x_b) (n-k-l) = \frac{B(n-k-l)}{l}\left(ka - k + \frac{t}{n-k-l}\right)\]
    \[\Rightarrow \mu_1(c-1) \ge 2(k(a-1) + \frac{t}{n-k-l}).\]
    Identically, we get $\mu_1(a-1) \ge 2((n-k-l)(c-1) + \frac{t}{k})$. 
\end{outline}

We normalise so that $X = \frac{k}{n}, Y = \frac{l}{n}, Z = \frac{n-k-l}{n}, T = \frac{t}{n^2}, \nu = \frac{\mu}{n}$.
\begin{outline}
    \1 Extrema: Since $\nu_2 < 0$, extrema(c) gives $\nu_2^2(c-1) \ge 2\frac{T}{Z}(-Xa - Y + Zc) = 2\frac{T}{Z}(\nu_2 + 2Zc)$. 
    \1 Mean: For Type 1, mean(c) gives $\nu_1(c-1) \le 2\frac{T}{Z}a$ and mean(a) gives $\nu_1(a-1) \le 2\frac{T}{X}c$. Type 2 gives $\nu_2(c-1) \ge 2\frac{T}{Z}a$ and $\nu_2(a-1) \le 2\frac{T}{X}c$. 
    \1 Smoothing: smoothing(c) gives $\nu_1(c-1) \ge 2X(a-1) + \frac{2T}{Z} = 2Xa + (\frac{2T}{Z} - 2X)$, and smoothing(a) gives $\nu_1(a-1) \ge 2Z(c-1) + \frac{2T}{X} = 2Zc + (\frac{2T}{X} - 2Z)$. \\
\end{outline}

For example, consider $G_4$ and $G_6$ from above. We show these inequalities visually with Desmos, plotting $a$ on the x-axis, $c$ on the y-axis and $\nu$ on the z-axis. Refer to the following colouring scheme for the boundaries: (red, mean(a)), (orange, mean(c)), (grey, extrema(c)), (blue, smoothing(a)), (purple, smoothing(c)). 

The green then represents the feasible region for $\mu_1$ of Type 1 and $\mu_2$ of Type 2 from the inequalities, and the green balls denote the true values. For $G_4$, many boundaries coincide and the feasible region points to exactly the eigenvalues. 
\begin{figure}[H]
    \centering
    \begin{tabular}{cc}
         \includegraphics[width = 0.5\textwidth]{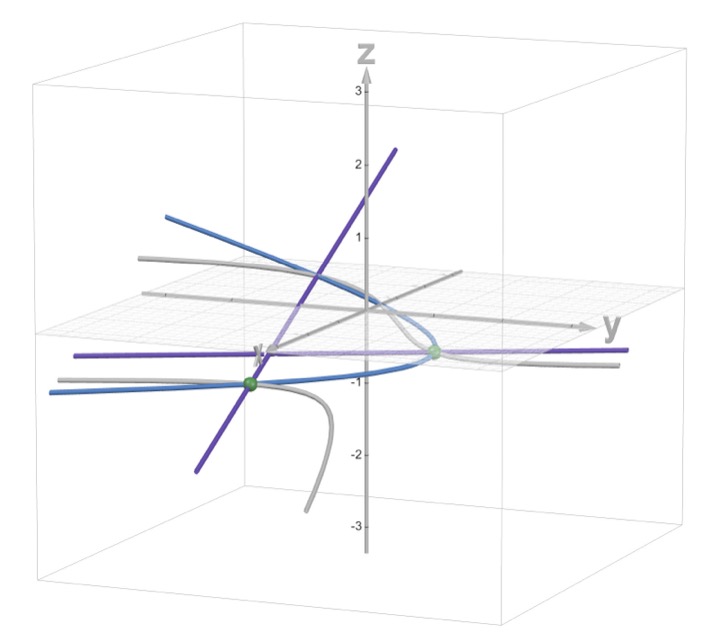} & \includegraphics[width = 0.45\textwidth]{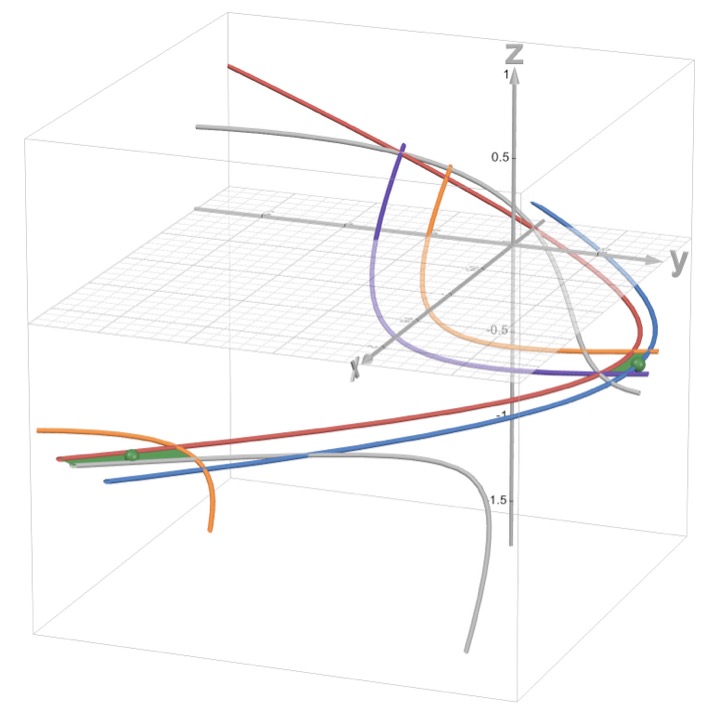} \\
    \end{tabular}
    \caption{Visually demonstrating the inequalities for $G_4$ (left) and $G_6$ (right).}
    \label{fig:plot-demonstration}
\end{figure}

\textbf{Stage 3: Boundaries.} Consider an intersection of an equation of the form $\nu(c-1) = pa + q$ with the plane. Noting that $\nu = -(Xa + Y + Zc)$, we get:
\begin{align*}
    pa + q &= -(c-1)(Xa + Y + Zc) \\
    \Rightarrow 0 &= Zc^2 + Xac + (p-X)a + (Y-Z)c + (q-Y) \\
    \Rightarrow a &= - \frac{Zc^2 + (Y-Z)c + (q-Y)}{Xc + p - X} \\
    \Rightarrow \nu &= -\frac{Zpc + Yp - Xq}{Xc + p - X} = -\frac{Zp}{X} + \frac{Xq + \frac{Zp^2}{X} - Zp - Yp}{Xc + p - X}
\end{align*}
which defines a hyperbola $\nu = f(c)$. Consider the non-constant part:
\begin{outline}
    \1 mean(c): $p = 2\frac{T}{Z}$ and $q = 0$, hence $2\frac{T}{Z}\frac{\frac{2T}{X} + X - 1}{Xc + 2\frac{T}{Z} - X}$. Similarly, $2\frac{T}{X} \frac{\frac{2T}{Z} + Z - 1}{Za + 2\frac{T}{X} - Z}$ for mean(a). 
    \1 smoothing(c): $p = 2X$, $q = 2\frac{T}{Z}-2X$, hence $2\frac{\frac{T}{Z}+2Z-1}{c+1}$. $2\frac{\frac{T}{X}+2X-1}{c+1}$ for smoothing(a). 
\end{outline}
Fix $X, Z$ and consider the evolution of the inequalities as $T$ varies from $0 \to XZ$. Let an inequality \purp{flip} at a value $T$ if it flips the sign of the numerator of the non-constant part of the hyperbola going from $T^- \to T^+$. 

We have that mean(c) flips from \purp{negative} to \purp{positive} (describing the numerators) at $T = \frac{X(1-X)}{2}$, and similarly mean(a) flips from negative to positive at $T = \frac{Z(1-Z)}{2}$. Then smoothing(c) flips from negative to positive at $T = Z(1-2Z)$ and smoothing(a) at $T = X(1-2X)$. Note that if $X \ge 1/2$ or $Z \ge 1/2$ then we could have smoothing always positive. On the other hand, mean always starts off negative. \\

In later stages we often treat $\nu$ as a function of $a$ or $c$ only, and label the hyperbola boundaries as increasing or decreasing. Note that the feasible region defined just by one inequality of the given form constrained on the plane will always either lie above the upper branch and below the lower branch (i.e. inside the individual branches), or below the upper branch and above the lower branch (i.e. between the branches) when projected onto the respective $(c,\nu)$ or $(a,\nu)$ plane. 

This is determined by replacing the equals signs above by the correct inequality sign, which depends on the value of $c$ (or $a$) since we divide through by $Xc + 2\frac{T}{Z} - X$ (or the $a$ equivalent). We demonstrate with Type 1 smoothing(c). Since $c + 1 > 0$ always, we have $pa + q \le -(c-1)(Xa + Y + Zc)$ hence $0 \ge Zc^2 + Xac + (p-X)a + (Y-Z)c + (q-Y)$ and
\[a \le -\frac{Zc^2 + (Y-Z)c + (2\frac{T}{X} - 2X - Y)}{Xc + X},\]
which implies
\[\nu_1 \ge -2Z + 2\frac{\frac{T}{Z} + 2Z - 1}{c+1}.\]

For $T < Z(1-2Z)$, smoothing(c) is increasing and negative with the feasible region lying above the lower branch. Without restriction on $c$, it would lie between the branches. Analogously, for $T > Z(1-2Z)$, smoothing(c) is decreasing and positive with the feasible region lying above the upper branch. Without restriction on $c$, it would then lie inside the individual branches. For $T = Z(1-2Z)$, smoothing(c) is constant. 

The others follow similarly. Note that for mean inequalities, we could have both branches satisfying the border(a) and border(c) inequalities. \\

\textbf{Stage 4: Intersections.} We find that the minimum $\nu$ often occurs at the intersection of two such inequalities. We deal mainly with Type 1 and explicitly say when we touch on Type 2. The cases that we consider arise from the analysis of the phases in Stage 5. \\

\purp{Intersection I}: At the intersection of mean(c) and smoothing(c), 
\[2\frac{T}{Z}a = \nu_1(c-1) = 2\left(X(a-1) + \frac{T}{Z}\right),\]
which gives $(\frac{T}{Z} - X)(a-1) = 0$. Note that $\frac{T}{Z} = X$ iff we have a complete bipartite graph between the front and back, which yields a block construction and is dealt with as the complement of the construction in Theorem \ref{thm:block}. Since this is regular, we get $\lambda_{n-1} + \lambda_n \ge -\frac{2n}{3}$ which implies $\nu_1 + \nu_2 \ge -\frac{4}{3} > -\sqrt{2}$. 

Otherwise, we have $a = 1$. Then $\nu_1(c-1) = 2\frac{T}{Z}$, which yields $c = 2\frac{T}{Z\nu_1} + 1$ and $-\nu_1 = Xa + Y + Zc = 2\frac{T}{\nu_1} + 1$ where $X + Y + Z = 1$. Hence we get $\nu_1^2 + \nu_1 + 2T = 0$, with intersections at $\frac{-1\pm \sqrt{1-8T}}{2}$.     

Note that these intersections may not always exist. Indeed, we require $1-8T \ge 0$ or equivalently $T \le \frac18$. But if this holds and $\nu_1 < 0$, then we get $a = 1, c = 2 \frac{T}{Z\nu_1} + 1$ yielding a valid intersection point. Thus, mean(c) and smoothing(c) intersect until $T = \frac18$, similarly with mean(c) and smoothing(c).\\

\purp{Intersection Ia}: First we consider when the minimum $\nu_1$ occurs at the upper intersection. Hence $\nu_1 \ge \frac{-1+\sqrt{1-8T}}{2}$ which is always at least $-\frac12$. But
\[\sum_{i = 1}^n \mu_i^2 = Tr(M^2) = \sum_{i = 1}^n \sum_{j = 1}^n M_{ij} M_{ji} = n^2,\]
thus $\nu_1^2 + \nu_2^2 \le 1$ and $\nu_1 + \nu_2 \ge -\frac12 - \frac{\sqrt{3}}2 > -\sqrt{2}$.\\

\purp{Intersection Ib}: Otherwise, if we have a Type 2 $\nu_2 \ge -2\sqrt{T}$ and a Type 1 $\nu_1 \ge -\frac{1+\sqrt{1-8T}}{2}$, then $\nu_1 + \nu_2$ is bounded below by $-2\sqrt{T} - \frac{1+\sqrt{1-8T}}{2}$ at $T = \frac1{12}$ which gives $-\frac{1+\sqrt{3}}{2} > -\sqrt{2}$. This also holds for the $a$ variants. \\

\purp{Intersection II}: We next consider the intersection of smoothing(a) and smoothing(c) both positive. We get:
\begin{align*}
    a-1 &= \frac{2Z}{\nu_1} (c-1) + \frac{2T}{X\nu_1} \\
    \Rightarrow \nu_1(c-1) &= \frac{4XZ}{\nu_1} (c-1) + \frac{4T}{\nu_1} + \frac{2T}{Z} \\
    \Rightarrow \left(\nu_1 - \frac{4XZ}{\nu_1}\right)(c-1) &= \frac{4T}{\nu_1} + \frac{2T}{Z} \\
    \left(\nu_1 - \frac{4XZ}{\nu_1}\right)(a-1) &= \frac{4T}{\nu_1} + \frac{2T}{X} \\
    \Rightarrow -\left(\nu_1 - \frac{4XZ}{\nu_1}\right)\nu_1 &= \frac{4TX}{\nu_1} + 2T + \frac{4TZ}{\nu_1} + 2T + \left(\nu_1 - \frac{4XZ}{\nu_1}\right) \\
    \Rightarrow 0 &= P(\nu_1) = \nu_1^3 + \nu_1^2 + 4(T-XZ)\nu_1 + 4(TX + TZ - XZ).
\end{align*}

Note that $P(0) = 4(TX + TZ - XZ) \le 0$ since $T \le XZ$ and $X + Z \le 1$, hence there is at least one non-negative root. For reasons justified in Stage 5, we require the second root of this cubic, which we show exists. 

This will always yield a valid intersection point, since from $\nu_1$ we obtain $a = \frac{\frac{4T}{\nu_1} + \frac{2T}{X}}{\nu_1 - \frac{4XZ}{\nu_1}} + 1$ and similarly for $c$. This is well-defined unless $\nu_1 - \frac{4XZ}{\nu_1} = 0$ or equivalently $\nu_1 = \pm 2\sqrt{XZ}$. But then we require $0 = P(\nu_1) = 4T(X + Z \pm 2\sqrt{XZ})$ hence we have the equality case of AM-GM, where $X = Z$ and $\nu_1 = -2X$. In that case, $-2X(a-1) = 2X(c-1) + \frac{2T}{X}$ which implies $a + c = -\frac{T}{X^2} + 2$, and furthermore $2X = -\frac{T}{X} + 2X + Y$ gives $XY = X(1-2X) = T$. This intersection is a line, so we can pick any point, e.g. $a = c = -\frac{T}{2X^2} + 1$ which suffices since $T \le X^2$. 

Define $S = XZ$. Fix $T, S$ and vary $Z$. Note that only the constant term changes, and to get the minimal second root, we want to minimise this constant term, equivalent to minimising $Z + \frac{S}{Z}$. We already have $Z + \frac{S}{Z} \le 1$ and $\frac14 \ge S \ge T$. \\

\purp{Intersection IIa}: Our first case involves the additional restriction $Z \le \sqrt{T}$, alongside the assumed $T \ge Z(1-2Z)$ which implies $Z \le \frac{1 - \sqrt{1-8T}}{2}$ or $Z \ge \frac{1+\sqrt{1-8T}}{2}$. Yet $\frac{1+\sqrt{1-8T}}{2} > \sqrt{T}$ hence we must have the former.

This also means $Z \le \sqrt{S}$ hence $Z \le \frac{S}{Z}$. Since $Z + \frac{S}{Z} \le 1$, we have $Z^2 - Z + S \le 0$ which gives $Z \ge \frac{1-\sqrt{1-4S}}{2}$ and subsequently $\sqrt{T} \ge \frac{1-\sqrt{1-4S}}{2}$. We get that $Z + \frac{S}{Z}$ is minimised if we move $Z$ closer to $\sqrt{S}$, hence minimum occurs at $Z = \min\{\sqrt{T}, \frac{1-\sqrt{1-8T}}{2}\}$. In particular, the former for $T \ge \frac19$, and the latter for $T \le \frac19$. 

Our polynomial becomes, in the two cases respectively,
\[P(\nu_1) = \nu_1^3 + \nu_1^2 + 4(T-S)\nu_1 + 4(\sqrt{T}(T + S) - S),\]
\[P(\nu_1) = \nu_1^3 + \nu_1^2 + 4(T-S)\nu_1 + 4\left(\frac{2TS}{1-\sqrt{1-8T}} + \frac{T(1-\sqrt{1-8T})}{2} - S\right).\]

Suppose that the eigenvalue satisfies $\nu_1 \le -0.7$. Then $P(-0.7) \le 0$ and $-0.7$ is past the second turning point of $P$, which occurs when $3\nu_1^2 + 2\nu_1 + 4(T-S) = 0$ where $\nu_1 = \frac{-1-\sqrt{1-12(T-S)}}{3}$. Thus, $-1.1 \ge -\sqrt{1-12(T-S)}$, which rearranges to $1.21 \le 1-12(T-S)$ and subsequently we obtain $S \ge T + \frac{7}{400}$. 

Consider $P(-0.7)$ with fixed $T$ and varying $S$. In both cases, $S$ is linear hence attains its minimum at an endpoint, either at the minimum $S = T + \frac{7}{400}$ or at a maximum when $\sqrt{T} = \frac{1-\sqrt{1-4S}}{2}$, giving $S = \frac{1 - (2\sqrt{T}-1)^2}{4}$ (if this upper bound is lower than the lower bound, contradiction). We substitute both in to the first polynomial if $T \ge \frac19$ and the second if $T \le \frac19$, then apply the boundary condition $0 \le S \le \frac14$. Directly computing with Desmos, we get that as functions of $T$, we have $P(-0.7)$ is always positive, contradiction. Thus, $\nu_1 > -0.7$. Again, $\nu_1^2 + \nu_2^2 \le 1$ means $\nu_1 + \nu_2 \ge -0.7 - \sqrt{1-0.7^2} > -\sqrt{2}$. \\

\purp{Intersection IIb}: Our second case doesn't restrict $Z \le \sqrt{T}$ but instead claims that for Type 2, we have $\nu_2 \ge -2\sqrt{T}$. The minimum for $Z + \frac{S}{Z}$ occurs when $Z = \sqrt{S}$, whence the polynomial becomes
\[P(\nu_1) = \nu_1^3 + \nu_1^2 + 4(T-S)\nu_1 + 4(2T\sqrt{S} - S).\]

Suppose that $\nu_1 + \nu_2 \le -\sqrt{2}$, where $\nu_1$ is the second root for some $T, S$. Then $\nu_1 - 2\sqrt{T} \le -\sqrt{2}$. We again require $P(-\sqrt{2} + 2\sqrt{T}) \le 0$ and that $-\sqrt{2} + 2\sqrt{T}$ is past the second turning point of $P$. Thus, as before we get $-\sqrt{2} + 2\sqrt{T} \ge \frac{-1-\sqrt{1-12(T-S)}}{3}$ which gives $-3\sqrt{2} + 6\sqrt{T} + 1 \ge -\sqrt{1-12(T-S)}$. Since $T \le \frac14$, $-3\sqrt{2} + 6\sqrt{T} + 1 \le 4-3\sqrt{2} < 0$, hence
\[S \ge T + \frac{(-3\sqrt{2} + 6\sqrt{T} + 1)^2 - 1}{12}.\]

Consider $P(-\sqrt{2} + 2\sqrt{T})$ with fixed $T$ and varying $S$. We have derivative $4\sqrt{2} - 4 + \frac{4T}{\sqrt{S}}$, which is decreasing in $S$. Thus, it attains its minimum at an endpoint, either the above minimum or $S = \frac14$. We can compute that the proposed lower bound is greater than $\frac14$ for $T < 0.055$, hence assume $T \ge 0.055$. Substituting the lower bound, we get that as a function of $T$, $P(-\sqrt{2} + 2\sqrt{T})$ is bounded below by a positive number (0.001 suffices). Substituting the upper bound, we always get non-negative values with the only root at $T = \frac18$. Thus, we must have exactly $X = \frac12, Z = \frac12, T = \frac18$ with $\nu_1 = \nu_2 = -\frac{\sqrt{2}}{2}$. \\

\textbf{Stage 5: Phases.} We consider possible phases of the Type 1 feasible region as $T$ increases from $0 \to XZ$. For the graphical visualisations of the feasible regions, refer to the same colouring scheme as above, copied here for ease: (red, mean(a)), (orange, mean(c)), (grey, extrema(c)), (blue, smoothing(a)), (purple, smoothing(c)), and green for the resultant feasible region. All displayed states correspond to $X = 0.75, Z = 0.2$. 
\begin{outline}[enumerate]
    \1 \purp{Phase 1c}: mean(c), smoothing(c) both negative.

    \begin{figure}[H]
        \centering
        \begin{tabular}{cc}
            \includegraphics[width = 0.5\textwidth]{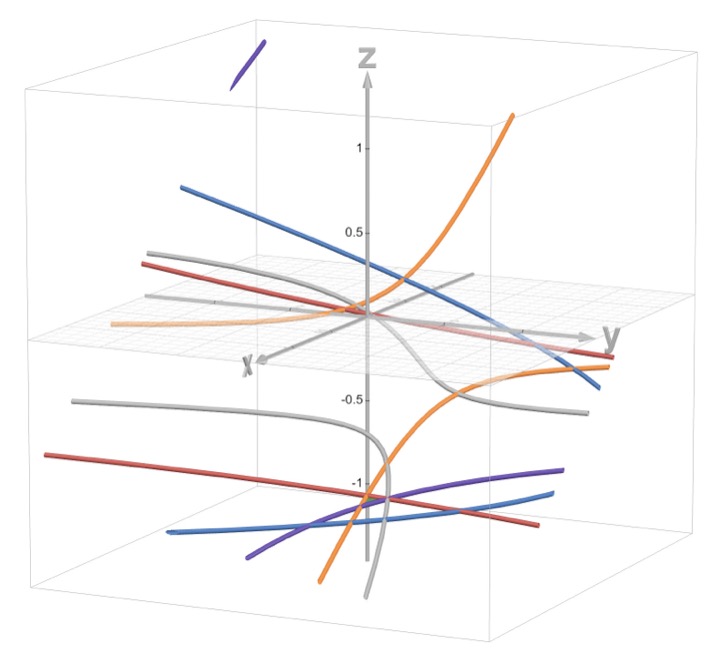} & \includegraphics[width = 0.5\textwidth]{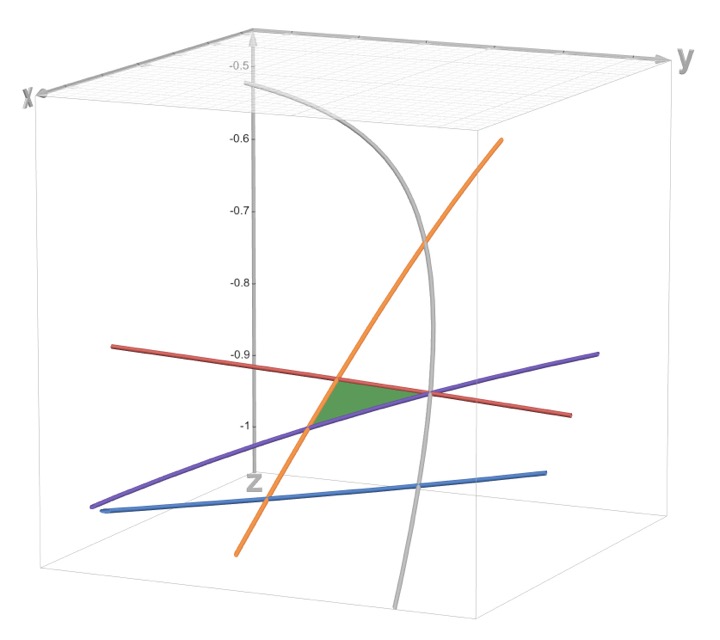}\\
        \end{tabular}
        \caption{Phase 1 visualised, with $T = 0.05$. Overall and close-up view.}
        \label{fig:phase1}
    \end{figure}

    We have that smoothing(c) and mean(c) are increasing hyperbolas in $c$. Note that $c+1>0$ hence the region $0 \le c \le 1$ from border(c) doesn't intersect with the upper branch of smoothing(c), and hence the feasible region lies above the lower branch. It also lies either above the upper branch of mean(c) or below the lower branch, therefore the minimum must occur at the lower intersection of the two lower branches. This corresponds to $\nu_1 = -\frac{1+\sqrt{1-8T}}{2}$ from Intersection I in Stage 4.  
    \1 \purp{Phase 2a}: mean(a), smoothing(a) both positive, with smoothing(c) negative.

    \begin{figure}[H]
        \centering
        \begin{tabular}{cc}
            \includegraphics[width = 0.5\textwidth]{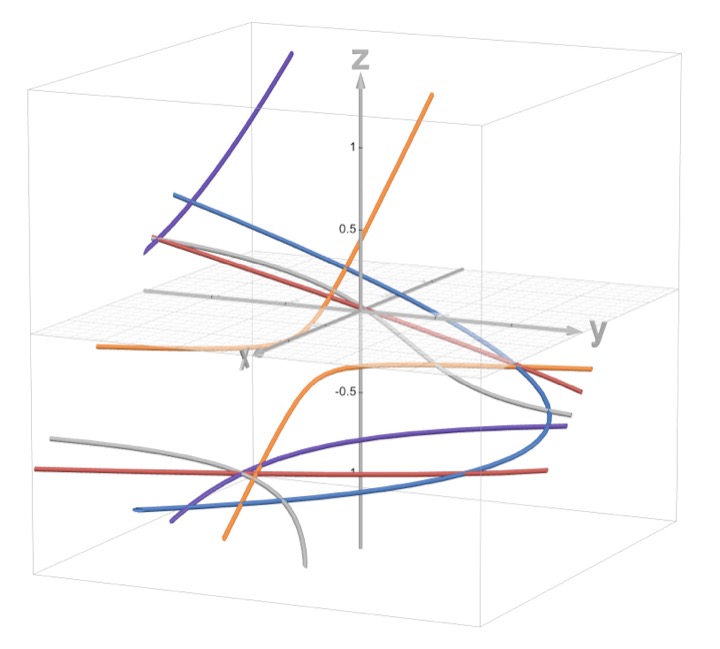} & \includegraphics[width = 0.5\textwidth]{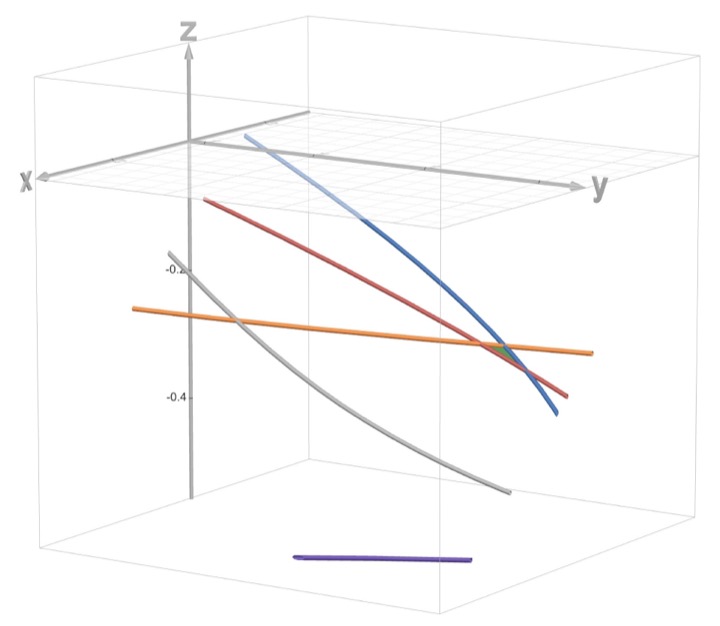}\\
        \end{tabular}
        \caption{Phase 2 visualised, with $T = 0.10$. Overall and close-up view.}
        \label{fig:phase2}
    \end{figure}
    
    $T \ge \frac{Z(1-Z)}{2}$ hence $2\frac{T}{Z} - X \ge 1 - Z - X \ge 0$. Now only the upper branch of mean(a) intersects with $a \ge 0$ since $a \ge 0 \ge 1-\frac{2T}{XZ}$ implies $Za + \frac{2T}{X} - Z \ge 0$, and the feasible region must lie outside it. Similarly, $a+1>0$ hence the feasible region lies inside the upper branch of smoothing(a). 

    We claim that smoothing(c) eliminates anything below the lower intersection of these two branches, hence the minimum occurs at the top intersection since they're both decreasing in $a$, where $\nu_1 = \frac{-1+\sqrt{1-8T}}{2}$. 

    Indeed, since smoothing(c) has $\nu_1$ increasing with $c$ for $T > Z(1-Z)$, at $c = 1$ it beats the value at its intersection with mean(c), where $a = 1$, $\nu_1 = -\frac{1+\sqrt{1-8T}}{2}$ and $c = 2\frac{T}{2\nu_1} + 1 = 1-\frac{4T}{Z(1+\sqrt{1-8T})} < 1$. In particular, it is higher than the second intersection of mean(a), smoothing(a) at $c = 1$. Yet we also have at $c = 0$, $Xa + Y = -\nu = 2Xa + \frac{2T}{Z} - 2X$ which gives $a = \frac{Y+2X-\frac{2T}{Z}}{X} \le 1$ since $Y + X - \frac{2T}{Z} \le Y + X - (1-Z) = 0$. Thus, smoothing(c) is also higher than mean(a) at $c = 0$, since $\nu = -Xa - Y$ is decreasing in $a$. 

    We have that the smoothing(c) boundary is concave and increasing in $c$, since it is the lower branch of an increasing hyperbola. The feasible region lies above this. Furthermore, below the lower intersection, the mean(a) boundary is also increasing but convex in $c$ since there exists a symmetric, higher intersection on the same branch, and here mean(a) bounds the lower feasible region from above. Thus, the complement of smoothing(c) encloses everything below the lower intersection. 
    \1 \purp{Phase 3}: smoothing(a), smoothing(c) both positive. 

    \begin{figure}[H]
        \centering
        \begin{tabular}{cc}
            \includegraphics[width = 0.5\textwidth]{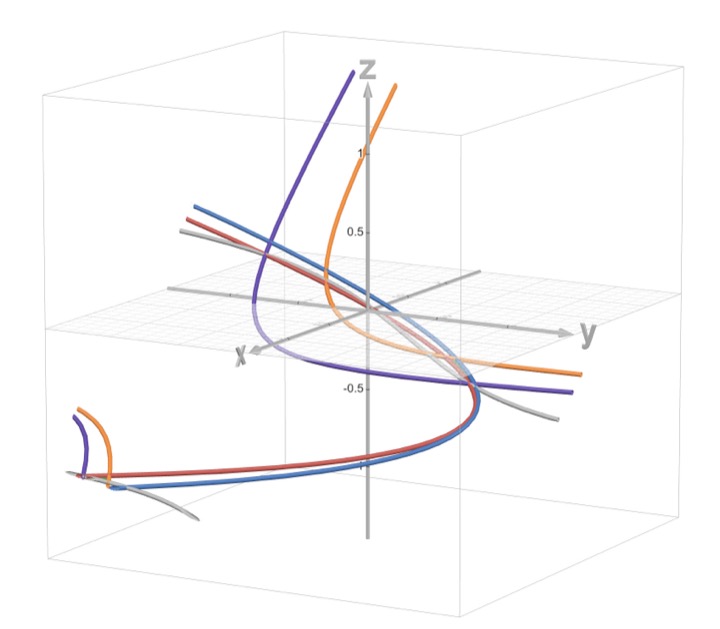} & \includegraphics[width = 0.5\textwidth]{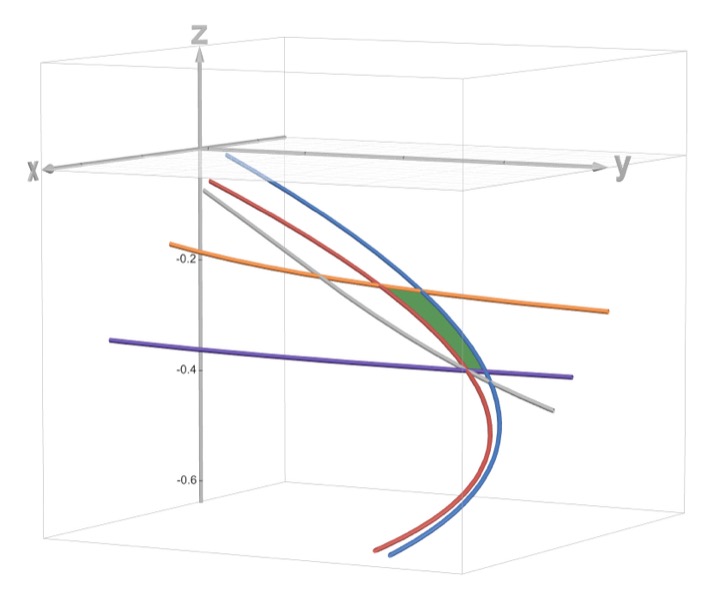}\\
        \end{tabular}
        \caption{Phase 3 visualised, with $T = 0.12$. Overall and close-up view.}
        \label{fig:phase3}
    \end{figure}

    Since $a+1 > 0$ and $c+1 > 0$, we only have the upper branches of both hyperbolas intersecting with the border conditions $a, c \ge 0$, and the feasible region lies above (or inside) both. Thus, it must lie inside the region bounded by the two intersection points of the branches, necessarily the two highest ones. Since the region bounded within a hyperbola branch is convex, the intersection of these two inequalities gives a convex region, minimised at an extreme point. In this case, the lower intersection point yields the minimum. \\
\end{outline}

For Type 2, we separately consider one possible phase, which exists concurrently with one of the Type 1 phases above. 

\begin{outline}[enumerate]
    \1 \purp{Phase $\Phi$}: mean(a) negative. 

    Now $a \ge 1$ hence $a \ge 1-\frac{2T}{XZ}$, so the feasible region lies below the lower branch. Yet mean(a) is increasing in $a$ hence is bounded below by the value of $\nu_2$ as $a \to \infty$. For finite $\nu_2$ we get $-\nu_2 = Xa + Y + Zc$ which yields $c \sim -\frac{X}Z a$, hence $\nu_2 = \frac{2T}{X} \frac{c}{a-1} \sim \frac{2T}{X} \frac{c}{a} = -\frac{2T}{Z}$. Thus, $\nu_2 < -\frac{2T}{Z}$. \\
\end{outline}

\begin{proof}[Proof of Theorem \ref{thm:almost-n/2-regular}]
    We now run through the various cases. At the end of one phase and the start of another, when $T$ exactly equals a threshold, we note that the arguments for the Phase 1 and Phase 3 bounds still hold. This is because a degenerate hyperbola is both increasing and decreasing, hence we can replace `positive' with `non-negative' and `negative' with `non-positive' in these instances above. Furthermore, if $T = 0$ we again get no edges and $\nu_1 + \nu_2 \ge -1$, and if $T = XZ$ we get the block construction with $\nu_1 + \nu_2 \ge -\frac43$. 

    For Type 2, we have two distinct cases. Note that extrema(c) gives $c(\nu_2^2 - 4T) \ge \nu_2^2 + \frac{2T}{Z}\nu_2$. Since $c \le 0$, if we have $\nu_2 < -2\sqrt{T}$, then $\nu_2^2 - 4T \ge 0$ hence $0 \ge \nu_2(\nu_2 + \frac{2T}{Z})$ and therefore $\nu_2 \ge -\frac{2T}{Z}$. Thus, either $\nu_2 \ge -2\sqrt{T}$ or $\nu_2 \ge -\frac{2T}{Z}$, where the latter requires $-2\sqrt{T} > -\frac{2T}{Z}$ which rearranges to $T > Z^2$. For $B = 0$, we proved above that we always have $\nu_2 \ge -2\sqrt{T}$. 
    \begin{outline}
        \1[-] If $1-2Z \ge X$, then $Z(1-2Z) \ge XZ \ge T$ and $\frac{1-X}{2} \ge Z$ which yields $\frac{X(1-X)}{2} \ge XZ \ge T$. Thus, mean(c) and smoothing(c) are always negative, and we always have Phase 1c. 
        \2[-] If $T \le Z^2$, then $\nu_2 \ge -2\sqrt{T}$. This is Intersection Ib.
        \2[-] If $T > Z^2$ and $\nu_2 \ge -\frac{2T}{Z}$, then consider mean(a). If $T < \frac{Z(1-Z)}{2}$, then we have Phase $\Phi$ and $-\frac{2T}{Z} > \nu_2 \ge -\frac{2T}{Z}$, contradiction. Thus we must have $T \ge \frac{Z(1-Z)}{2}$. 
        
        If $T \ge X(1-2X)$, then we have Phase 2a and Intersection IIa, noting that since $T \le \frac{X(1-X)}{2} \le \frac18$, these intersections exist. Otherwise, we get $\frac{Z(1-Z)}{2} \le T < \frac{2X(1-2X)}{2}$ hence $2X \in [Z, 1-Z]$. But then $2X \le 1-Z$ gives $2X + Z \le 1$ and we reduce to the case below.
        \1[-] If $1-2X \ge Z$, then identically mean(a), smoothing(a) are always negative and we always have Phase 1a. We also always have Phase $\Phi$ and in particular we cannot have $\nu_2 < -2\sqrt{T}$, since again $-\frac{2T}{Z} > \nu_2 \ge -\frac{2T}Z$ yields a contradiction. Thus, we have Intersection Ib. 
        \1[-] Henceforth assume $X+2Z > 1$ and $2X+Z > 1$. 
        \1[-] If $X, Z \le \frac12$, either $T < Z^2$ and we have a universal $\nu_2 \ge -2\sqrt{T}$ bound, or $\nu_2 \ge -\frac{2T}{Z}$ only if $XZ \ge T \ge Z^2$, which gives $X \ge Z$. Thus, WLOG suppose $X \ge Z$. 

        We cannot have $Z \le \frac14$ otherwise $X + 2Z \le 1$, contradiction. In particular, $2Z > X > 1-2Z$ and $\frac{X(1-X)}{2} > \frac{2Z(1-2Z)}{2} = Z(1-2Z)$. Thus, mean(c) flips after smoothing(c). But because $\frac14 < Z \le X \le \frac12$, we further have $Z(1-2Z) \ge X(1-2X)$ hence smoothing(a) flips before smoothing(c), giving the following progression of flips in order:
    
        \centerline{smoothing(a) $\to$ smoothing(c) $\to$ mean(c).}
        \2[-] If $T \le Z^2$, then we have Phase 1c, Intersection Ib until smoothing(c) flips, after which we get Phase 3, Intersection IIb.
        \2[-] If $T > Z^2$ and $\nu_2 \ge -\frac{2T}{Z}$, note that $\frac{Z(1-Z)}{2} \ge X(1-2X)$ since $2X + Z > 1$, and furthermore $\frac{X(1-X)}{2} \ge \frac{Z(1-Z)}{2}$. Thus, mean(a) flips between smoothing(a) and mean(c). 

        Consider after mean(a) flips, since before gives Phase $\Phi$ and the same contradiction. Until smoothing(c) flips, we have Phase 2a hence Intersection Ia, which again exists since mean(c) hasn't flipped so $T \le \frac18$. After smoothing(c) flips, we have Phase 3 hence Intersection IIa. 
        \1[-] Now again WLOG we can suppose $X > \frac12$ and $Z < \frac12$. In particular, $1-X, X \ge Z$ hence $\frac{X(1-X)}{2} \ge \frac{Z(1-Z)}{2}$ and mean(c) flips after mean(a). Note we also have smoothing(a) always positive. 
        \2[-] If $X \le 2Z$, again we have mean(c) flips after smoothing(c). We have the same phase progression as just above. 
        \2[-] Else, $X > 2Z$ and mean(c) flips before smoothing(c). The flip progression is:

        \centerline{smoothing(a) $\to$ mean(a) $\to$ mean(c) $\to$ smoothing(c).}

        We have Phase 1c until mean(a) flips, then Phase 2a until smoothing(c) flips, afterwards Phase 3. Note that the intersections for Phase 2a exist until $T = \frac18$ which is past smoothing(c) flipping. 
        \3[-] If $T \le Z^2$, then we have Intersection Ib, Intersection Ia, then Intersection IIb. 
        \3[-] If $T > Z^2$ and $\nu_2 \ge -\frac{2T}{Z}$, we skip until mean(a) flips, then we have Intersection Ia, IIa.
    \end{outline}
    
    Finally, note that $\nu_1 + \nu_2 = -\sqrt{2}$ means we must have Intersection IIb, $X = Z = \frac12$, $T = \frac18$ and $\nu_1 = \nu_2 = -\frac{\sqrt{2}}{2}$. For Type 1 this requires $a = c = \frac{\sqrt{2}}{2}$, and for Type 2 we have either $a \to \infty, c \to -\infty$ or equivalently, $B = 0$.
\end{proof}

Suppose FTSOC there exists a sequence of matrices $M_n$ such that $\nu_1(M_n) + \nu_2(M_n) \to -\sqrt{2}$ as $n \to \infty$. By continuity of all inequalities and the fact that equality only occurs at this specific state, with $\nu_1 + \nu_2$ from Intersections Ia, Ib and IIa all bounded a positive constant away from $-\sqrt{2}$, the matrices get arbitrarily close to this \purp{equality state}. In particular, as $n \to \infty$, we get $X, Z \to \frac12$, $T \to \frac18$, $\nu_1, \nu_2 \to -\frac{\sqrt{2}}{2}$ and $a, c \to \frac{\sqrt{2}}{2}$. \\

Further suppose FTSOC that there exists $N \in \mathbb{N}$ such that there exists arbitrarily large $n$ for which $|V(M_n)| \le N$. There are finitely many possible $M_n$, hence by pigeonhole, there exists $M$ with $|V(M)| = m \le N$ occurring infinitely many times. In particular we must have equality in all the quantities above. But then $\mu_1^2 + \mu_2^2 = m^2$ implies $\mu_3 = \dots = \mu_m = 0$ since $\sum_{i=1}^m \mu_i^2 = m^2$. Yet $-\sqrt{2}m = Tr(M) = -m$ contradiction. In particular, we get $|V(M_n)|$ is unbounded and can WLOG assume $|V(M_n)| = n$ for infinitely many $n$. We focus on this subsequence to simplify the notation in the following claims, but the arguments themselves hold as long as $|V(M_n)|$ is unbounded. \\

We consider limits of certain algebraic averages common to each graph as $n \to \infty$, which doesn't require that the sequence $(M_n)$ itself converges. Our goal is to formalise the fact that equality for both Type 1 and Type 2 cannot simultaneously occur.

Let the \purp{top} of the front refer to the first $\frac{t}{n-k-l}$ vertices in the front, and the \purp{bottom} of the front the remaining $k - \frac{t}{n-k-l}$. Given an interval $I$, let $nI$ denote $\{nr \mid r \in I\}$. 

\begin{clm}\label{clm:degree}
    For all $\varepsilon, \delta > 0$, there exists $N \in \mathbb{N}$ such that $\forall n \ge N$, in $M_n$ we have $d_a \le (\frac12 - \frac{\sqrt{2}}{4} + \varepsilon)n$ for all $a \in n(\frac14 + \delta, \frac12]$. 
\end{clm}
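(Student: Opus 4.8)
The plan is to derive the bound from the asymptotic tightness of the smoothing($c$) inequality in the equality state, pushed through the eigenvalue equation restricted to the front, with everything phrased in terms of local averages over fixed subintervals of $[0,\tfrac12]$ — which converge along a subsequence by boundedness, even though the graphs and eigenvectors themselves need not.

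First I would record that in the equality state $X,Z\to\tfrac12$, $T\to\tfrac18$, $a,c\to\tfrac{\sqrt2}{2}$, $\nu_1\to-\tfrac{\sqrt2}{2}$, that the relevant worst case is Intersection IIb, and therefore smoothing($c$) and smoothing($a$) are both asymptotic equalities along the sequence. Tracing the proof of smoothing($c$), the gap in $\sum_{a=1}^{k}d_ax_a\ge\tfrac{B(n-k-l)}{l}\bigl(ka-k+\tfrac{t}{n-k-l}\bigr)$ decomposes as $S_A+S_B$, where $S_A=\sum_a|F(a)|(x_{a+1}-x_a)\ge0$ with $F(a)=\sum_{a'\le a}(d_{a'}-d^{bb}_{a'})\le0$ and $d^{bb}$ the bang-bang profile $(n-k-l)\mathbf{1}[a\le\tfrac{t}{n-k-l}]$, and $S_B=(n-k-l)\sum_{a>t/(n-k-l)}(x_b-x_a)\ge0$. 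Since $\|x\|=1$ forces $x_b=\Theta(n^{-1/2})$, the asymptotic equality reads $S_A+S_B=o(n^{3/2})$, hence $S_B=o(n^{3/2})$; so for every fixed $I\subset(\tfrac14,\tfrac12]$ we get $\tfrac1{|nI|}\sum_{a\in nI}(x_b-x_a)=o(n^{-1/2})$, i.e. $x_a=(1-o(1))x_b$ on average over $nI$. Symmetrically, smoothing($a$) tightness gives $x_c\ge\tfrac12 x_b$ for all but $o(n)$ back vertices $c$.

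Next I would feed this into the eigenvalue equation. The middle vertices are isolated in $Q$, so $\mu_1 x_b=-(A+B+C)$ exactly, while a front vertex $a$ obeys $\mu_1 x_a=-(A+B+C)+2\sigma(d_a)$, where $\sigma(d_a)=\sum_{c=n-d_a+1}^{n}x_c$ is the sum of the $d_a$ smallest back components (the back-neighbours of $a$ are the suffix $\{n-d_a+1,\dots,n\}$ by Theorem \ref{thm:operation-run}, and these are the smallest by Type 1 monotonicity). Subtracting gives $\sigma(d_a)=\tfrac12|\mu_1|(x_b-x_a)\ge0$, so summing over $nI$ and using $|\mu_1|=\Theta(n)$ with the previous paragraph, $\sum_{a\in nI}\sigma(d_a)=o(n^{3/2})$. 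On the other hand $\sigma(d_a)\ge\tfrac12 x_b(d_a-o(n))_+$ since all but $o(n)$ back components exceed $\tfrac12 x_b$, whence $x_b\sum_{a\in nI}d_a=o(n^{3/2})$, i.e. $\tfrac1{|nI|}\sum_{a\in nI}d_a=o(n)$ for every fixed $I\subset(\tfrac14,\tfrac12]$. Finally, by the monotonicity of $(d_a)$ from Corollary \ref{cor:invariant-degrees}: given $\varepsilon,\delta>0$, if $d_{a_0}>(\tfrac12-\tfrac{\sqrt2}{4}+\varepsilon)n$ for some $a_0\in n(\tfrac14+\delta,\tfrac12]$, then $d_a\ge d_{a_0}$ on all of $nI$ with $I=(\tfrac14+\tfrac\delta2,\tfrac14+\delta)$, contradicting the average bound once $n$ is large. (This in fact gives the stronger conclusion $d_a=o(n)$ on $n(\tfrac14+\delta,\tfrac12]$; only the weaker stated form is used downstream.)

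The main obstacle is the step converting "$\sigma(d_a)$ is small on average" into "$d_a$ is small on average": it requires a uniform lower bound on the bulk of the back eigenvector, and since the back components need not converge one must extract this solely from the smoothing($a$) gap estimate and propagate the various $o(\cdot)$ error terms carefully through the scalings. Working with fixed subintervals $I$ bounded away from $\tfrac14$, rather than pointwise limits, is exactly what keeps the final monotonicity step legitimate in the absence of graphon convergence; the cushion $\delta$ in the statement is present for the same reason.
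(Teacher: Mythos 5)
Your setup (smoothing($c$) tightness $\Rightarrow$ the averaged gap $\sum_{a\in nI}(x_b-x_a)=o(n^{1/2})$, then the eigenvalue equation $\sigma(d_a)=\tfrac12|\mu_1|(x_b-x_a)$ to get $\sum_{a\in nI}\sigma(d_a)=o(n^{3/2})$) is sound and runs genuinely parallel to the paper's derivation of $(\star)$; the monotonicity endgame is also fine. But the step you flag yourself as ``the main obstacle'' is in fact where the argument breaks. The claim that smoothing($a$) tightness ``gives $x_c\ge\tfrac12 x_b$ for all but $o(n)$ back vertices $c$'' does not follow, and is false. Tightness of smoothing($a$) only forces $x_c\approx x_b$ on average over the \emph{top} $\approx n/4$ of the back (the $c$ with $d_c=0$ in the bang-bang profile); for the last $\approx n/4$ of the back it only controls the \emph{sum}, whose per-vertex average converges to $(\sqrt2-1)x_b\approx 0.414\,x_b<\tfrac12 x_b$. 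With that average and only $0\le x_c\le x_b$ as a constraint, a constant fraction of the bottom quarter can dip below $\tfrac12 x_b$ (indeed arbitrarily close to $0$), so the lower bound $\sigma(d_a)\ge\tfrac12 x_b(d_a-o(n))_+$ is unjustified. Your claimed stronger conclusion $d_a=o(n)$ on $n(\tfrac14+\delta,\tfrac12]$ is also simply false: in the Type 1 equality construction (Figure \ref{fig:equality}) every such $a$ has $d_a=(\tfrac12-\tfrac{\sqrt2}{4})n$, which is $\Theta(n)$ and sits exactly at the threshold in the claim, showing the stated bound is sharp and cannot be improved to $o(n)$.

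The correct lower bound on $\sigma(d_a)$ — and the reason the threshold $\tfrac12-\tfrac{\sqrt2}{4}$ appears rather than $0$ — uses only the \emph{averaged} information plus the pointwise \emph{upper} bound $x_c\le x_b$, not a pointwise lower bound. Since the neighbours of $a$ are the suffix $\{n-d_a+1,\dots,n\}$, if $d_a>(\tfrac12-\tfrac{\sqrt2}{4}+\varepsilon)n$ then $\sigma(d_a)\ge\sum_{c\ge(\tfrac12+\tfrac{\sqrt2}{4}-\varepsilon)n}x_c$, and
\[
\sum_{c\ge(\tfrac12+\tfrac{\sqrt2}{4}-\varepsilon)n}x_c \;=\;\sum_{c\ge 3n/4}x_c\;-\;\sum_{3n/4\le c<(\tfrac12+\tfrac{\sqrt2}{4}-\varepsilon)n}x_c\;\ge\;(\sqrt2-1)\tfrac{n}{4}x_b \;-\;\Bigl(\tfrac{\sqrt2-1}{4}-\varepsilon\Bigr)n\,x_b\;-\;o(nx_b)
\]
where the first term comes from the smoothing($a$)-tightness average and the subtracted term uses $x_c\le x_b$. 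This yields $\sigma(d_a)\ge(\varepsilon-o(1))n\,x_b$, which fed into your $\sum_{a\in nI}\sigma(d_a)=o(n^{3/2})$ (with $I\subset(\tfrac14,\tfrac14+\delta)$ so that $d_a$ is large throughout $nI$ by monotonicity) gives the contradiction. With this repair your route coincides with the paper's; as stated, though, the pointwise lower bound on $x_c$ is a genuine gap and would let through the very equality configuration the claim is designed to rule against.
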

\begin{proof}
    We reconsider the derivation of smoothing(c) for the Type 1 eigenvector. WLOG $x_b = \frac{B}{l} = 1$. Note that $a$ converging to $\frac{\sqrt{2}}{2}$ represents the average of the front $x_a$. Suppose FTSOC that there exists $\epsilon > 0$ such that there are arbitrarily large $n$ with the average of the bottom of the front $x_a$ at most $1-\epsilon$. Then $\sum x_ad_a \ge (A - (k - \frac{t}{n-k-l})x_b(1-\epsilon))(n-k-l)$ which turns smoothing(c) into $\nu_1(c-1) \ge 2Xa + (\frac{2T}{Z} - 2X)(1-\epsilon)$. Yet this gives $\nu_1 + \nu_2$ strictly greater than $-\sqrt{2}$ by some $\delta(\epsilon) > 0$, since the equality state is infeasible and the infimum is attained by continuity. Thus, for sufficiently large $n$, the average of the bottom of the front $> 1-\epsilon$ and hence converges to 1. 

    Furthermore, since $x_a \le 1$ we must further have that any constant positive proportion of the bottom has average converging to 1. Additionally, the average of the top of the front converges to $\sqrt{2} - 1$, since $\frac{k - \frac{t}{n-k-l}}{n} = X - \frac{T}{Z} \to \frac14$. The reverse holds for the back due to smoothing(a). 

    Let $I$ be a non-trivial sub-interval of $(1/4, 1/2)$. Then because $\frac{k - \frac{t}{n-k-l}}{n} \to \frac14$ and $\frac{k}{n} \to \frac12$, we have that for sufficiently large $n$, $nI \subseteq [k - \frac{t}{n-k-l}, k]$. Thus, with $r = |nI \cap \mathbb{N}|$,
    
    \begin{align*}
        \mu_1 \sum_{a \in nI} x_a &= -r(A+B) + \sum_{a \in nI} \left(\sum_{\substack{c \ge k+l+1 \\ a \sim c}} x_c - \sum_{\substack{c \ge k+l+1 \\ a \not \sim c}} x_c\right) \\
        \Rightarrow \nu_1 \frac{\sum\limits_{a \in nI} x_a}{r} &= - (Xa + Y) - \frac{1}{nr} \sum_{a \in nI} \left(C - 2\sum_{\substack{c \ge k+l+1 \\ a \sim c}} x_c\right) \\
        \Rightarrow \nu_1 \frac{ \sum\limits_{a \in nI} x_a}{r} &= - (Xa + Y + Zc) + 2 \frac{1}{r} \sum_{a \in nI} \left(\frac{1}{n} \sum_{\substack{c \ge k+l+1 \\ a \sim c}} x_c\right).
    \end{align*}
    Note that $\nu_1 \to -\frac{\sqrt{2}}{2}$, $Xa + Y + Zc \to \frac{\sqrt{2}}{2}$, and $\frac{\sum_{a \in nI} x_a}{r} \to 1$ since $nI$ is a constant positive proportion of the bottom. We also have $\frac{r}{n|I|} \to 1$, hence we then take the limit of the above equality to obtain
    \[\lim_{n \to \infty} \frac{1}{n|I|} \sum_{a \in nI} \left(\frac{1}{n} \sum_{\substack{c \ge k+l+1 \\ a \sim c}} x_c\right) \to 0. \tag{$\star$}\]
    This represents the fact that for bottom $a$, the average value of its neighbours in the back ought to converge to 0.
    
    Suppose FTSOC that there exists $\varepsilon, \delta > 0$ such that there are arbitrarily large $n$ with $a' \in n(\frac14+\delta, \frac12]$ where $d_{a'} > (\frac12 - \frac{\sqrt{2}}{4} + \varepsilon)n$. Since the $d_a$ are decreasing, we have for all $a \in n(\frac14, \frac14+\delta]$ that $d_a > (\frac12 - \frac{\sqrt{2}}{4} + \varepsilon)n$. Yet for each of these, 
    \[\sum\limits_{\substack{c \ge k+l+1\\a\sim c}} x_c \ge \sum\limits_{c \ge (\frac12 + \frac{\sqrt{2}}{4} - \varepsilon)n} x_c.\]
    
    By an analogous argument to above for smoothing(a), we have that the average of the last $\frac{t}{k}$ vertices in the back converges to $\sqrt{2}-1$. Given that $\frac{t}{k} \to \frac{n}{4}$ as $n \to \infty$, we then deduce that
    \[\frac{\sum_{c \ge \frac34 n} x_c}{\frac14 n} \to \sqrt{2}-1.\]
    since $0 \le x_c \le 1$ bounded. Additionally, $x_c \le 1$ yields 
    \[\sum\limits_{\frac34 n \le c \le (\frac12 + \frac{\sqrt{2}}{4} - \varepsilon)n} x_c \le \left(\frac{\sqrt{2}-1}{4} - \varepsilon\right) n.\]
    
    Hence by convergence, for all $\varepsilon' > 0$, we have for $n$ sufficiently large, 
    \[\sum\limits_{c \ge (\frac12 + \frac{\sqrt{2}}{4} - \varepsilon)n} x_c \ge (\varepsilon - \varepsilon')n.\] 
    
    But then for $I = (\frac14, \frac14 + \delta)$ and $n$ sufficiently large,
    \[\frac{1}{n|I|} \sum_{a \in kI} \left(\frac{1}{n} \sum_{\substack{c \ge k+l+1 \\ a \sim c}} x_c\right) \ge \varepsilon - \varepsilon' > 0\]
    which is a contradiction of $(\star)$.
\end{proof}

\begin{clm}\label{clm:small-box}
    There exists a small constant $\eta$ such that for sufficiently large $n$, there is always an edge between $x_a, x_c$ for $a \in n(\frac14, \frac14 + \eta)$ and $c \in n(1-\eta, 1)$ in $M_n$. 
\end{clm}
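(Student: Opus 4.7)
The plan is to argue by contradiction. Fix $\eta > 0$ (to be chosen small) and suppose, for arbitrarily large $n$, that no edge of $M_n$ lies between $A' := \{a \in \mathbb{Z} : n/4 < a < n/4 + \eta n\}$ and $C' := \{c \in \mathbb{Z} : n(1-\eta) < c \le n\}$; we aim to contradict $\nu_{n-1}(M_n) + \nu_n(M_n) \to -\sqrt{2}$. By the nested staircase structure from Theorem \ref{thm:operation-run} together with the canonical rotation of Theorem \ref{thm:invariant-bipartite}, the back-neighbours of any front vertex $a$ form precisely the suffix $\{n - d_a + 1, \ldots, n\}$ of the back. Hence $d_a \ge 1$ for some $a \in A'$ would produce an edge to vertex $n \in C'$; the hypothesis thus forces $d_a = 0$ for all $a \in A'$, and by the monotonicity of $(d_a)$ (Corollary \ref{cor:invariant-degrees}) this extends to $d_a = 0$ for all $a \ge a^* := \min A' \le \lceil n/4 \rceil + \eta n + 1$. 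Every edge of $Q_n$ therefore lies in the bipartite block between the top-front $\{1, \ldots, a^* - 1\}$ and the back of size $s$ with $s/n \to 1/2$.

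Applying Claim \ref{clm:degree} with any $\delta \in (0, \eta)$, for large $n$ we have $d_a \le (1/2 - \sqrt{2}/4 + \varepsilon)n$ for all $a \in (n/4 + \delta, n/2]$, so $\sum_{a = \lceil n/4 \rceil + 1}^{a^* - 1} d_a = O(\eta n^2)$. Combining with $t/n^2 \to 1/8$ and the trivial bound $\sum_{a \le n/4} d_a \le (n/4) s \le n^2/8$, the bipartite block between $\{1, \ldots, \lceil n/4 \rceil\}$ and the back is nearly complete, with at most $O(\eta n^2) + o(n^2)$ missing or misplaced edges.

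Next, introduce the idealised matrix $M_0 := 2 Q_0 - J_n$, where $Q_0$ is the adjacency of the complete bipartite graph on the top-$\lceil n/4 \rceil$ and back blocks with all remaining vertices isolated. Then $M_n - M_0$ has $O(\eta n^2)$ nonzero entries of magnitude $2$, yielding $\|M_n - M_0\|_{\mathrm{op}} \le \|M_n - M_0\|_F = O(\sqrt{\eta}\, n)$. Compute the spectrum of $M_0$ using block-constant test vectors $v = (a \mathbf{1}_{n_1}, b \mathbf{1}_{n_2}, c \mathbf{1}_{n_3})^\top$ (with $n_1 = \lceil n/4 \rceil$, $n_3 = s$, $n_2 = n - n_1 - n_3$): this reduces to a $3 \times 3$ problem whose characteristic polynomial is $\mu^3 + n \mu^2 - 4 n_1 n_2 n_3 = 0$, while every vector with zero sum on each block lies in the kernel of $M_0$, providing $n - 3$ zero eigenvalues. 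Normalising $\nu = \mu/n$ and using $n_1/n \to 1/4$, $n_2/n \to 1/4$, $n_3/n \to 1/2$, the cubic becomes $\nu^3 + \nu^2 - 1/8 = (\nu + 1/2)(\nu^2 + \nu/2 - 1/4) = 0$ with roots $\{-1/2, (-1 \pm \sqrt{5})/4\}$, so $\nu_{n-1}(M_0) + \nu_n(M_0) \to -(3 + \sqrt{5})/4$.

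Weyl's inequality then gives $\nu_{n-1}(M_n) + \nu_n(M_n) \ge -(3 + \sqrt{5})/4 - O(\sqrt{\eta}) + o(1)$, and since $\sqrt{2} - (3 + \sqrt{5})/4 \approx 0.105 > 0$, choosing $\eta$ small enough makes the right-hand side exceed $-\sqrt{2} + \kappa$ for a fixed $\kappa > 0$, contradicting $\nu_{n-1}(M_n) + \nu_n(M_n) \to -\sqrt{2}$. The hard part will be the explicit spectral computation of $M_0$ (a clean block reduction followed by factoring a cubic) together with verifying that the Frobenius perturbation bound $O(\sqrt{\eta}\, n)$ stays safely below the constant gap of roughly $0.105$ for sufficiently small $\eta$; this works because the staircase forces the surviving edges into a genuinely near-complete bipartite corner rather than being spread out.
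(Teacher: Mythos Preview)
Your argument works, but you have taken the wrong negation. The claim asserts that \emph{every} pair in the $\eta n\times\eta n$ box is an edge --- this is how the paper's own proof reads it, and it is what the downstream use in the proof of Theorem~\ref{thm:n/2-regular} requires (so that each $a\in n(\tfrac14,\tfrac14+\eta)$ carries at least $\eta n$ edges, giving a positive proportion in total). You instead assume \emph{no} pair in the box is an edge and deduce $d_a=0$ for all $a>n/4$; as written, this only establishes the existence of a single edge in the box, which is not enough downstream. Under the correct negation, some pair $(a_0,c_0)$ is a non-edge; the suffix structure then gives $d_{a_0}<n-c_0+1\le\eta n$, and by monotonicity $d_a\le\eta n$ for all $a\ge a_0$. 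Fortunately this still suffices for your scheme: edges of $Q_n$ with $a>n/4$ number at most $\eta n\cdot s+(n/4)\cdot\eta n=O(\eta n^2)$, so $\|M_n-M_0\|_F=O(\sqrt\eta\,n)$ and everything after goes through unchanged. (As an aside, your appeal to Claim~\ref{clm:degree} is never needed: with $a^*=\lfloor n/4\rfloor+1$ the sum you bound with it is empty, and under the corrected negation the trivial bound $d_a\le s$ on the at most $\eta n$ indices $a\in(n/4,a_0]$ already gives $O(\eta n^2)$.)

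Modulo this fix, your route is genuinely different from the paper's. The paper argues by pure edge-counting: using the back-symmetric analogue of Claim~\ref{clm:degree} it shows $\sum_{a\le n/4}d_a<(\tfrac18-\delta)n^2$, so (as $T\to\tfrac18$) at least $\delta n^2$ edges have $a>n/4$; Claim~\ref{clm:degree} forces these to have $c>3n/4$, while the negation caps each such $a$ at $\le\eta n$ edges, and the resulting upper bound $\eta(\tfrac14-\eta)n^2+\tfrac\eta2 n^2$ falls below $\delta n^2$ for small $\eta$. Your approach instead pins down the near-limit structure as an explicit three-block matrix $M_0$, reads off its bottom two normalised eigenvalues $-\tfrac12$ and $(-1-\sqrt5)/4$ from the quotient cubic $\nu^3+\nu^2-\tfrac18=0$, and uses Weyl to transfer the gap $\sqrt2-(3+\sqrt5)/4\approx0.105$ back to $M_n$. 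This bypasses Claim~\ref{clm:degree} and its symmetric version entirely, and makes the obstruction transparent: the hypothetical degenerate $Q_n$ is essentially a block construction, whose spectrum is simply too far from $-\sqrt2$.
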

\begin{proof}
    Suppose FTSOC there isn't. Note that for sufficiently large $n$, by Claim \ref{clm:degree} the number of edges between $a \in n(0, \frac14)$ and $c$ is at most $\frac14n \cdot \frac14n + \frac14n \cdot (\frac12 - \frac{\sqrt{2}}{4} + \varepsilon)n < (\frac18 - \delta)n^2$ for some fixed positive $\delta$ as $\varepsilon \to 0$. Thus, since we have a converging to 0 proportion of edges between $a \in n(1/4, 1/2)$ and $c \in n(1/2, 3/4)$, we must have at least $\delta n^2$ edges between $a \in n(1/4, 1/2)$ and $c \in n(3/4, 1)$. 
    
    Since $d_a$ is decreasing, each $x_a$ with $a > (\frac14 + \eta)n$ contributes at most $\eta n$ edges, and every $a \in n(\frac14, \frac14 + \eta)$ contributes at most $n/2$. But then we get $\delta n^2 \le \eta(\frac14 - \eta) n^2 + \frac{\eta}{2} n^2$ which gives $\delta \le \eta(\frac14 - \eta) + \frac{\eta}{2}$. But for $\eta$ sufficiently small, this is less than $\delta$.
\end{proof}

With these two claims and their symmetric counter-parts for $c$, we are ready to prove Theorem \ref{thm:n/2-regular}.

\begin{proof}[Proof of Theorem \ref{thm:n/2-regular}]
    We re-consider the derivation of extrema(c) for the Type 2 eigenvector. Similar to before, if there exists a constant positive proportion $\delta$ of the edges where $\frac{x_a}{x_1} < 1-\epsilon$, then we get $\mu_2(c-1) \le 2\frac{l}{(n-k-l)B} (x_1(t - \delta n^2) + x_1(1 - \epsilon)\delta n^2)$ hence $\nu_2^2(c-1) \ge 2\frac{T - \epsilon \delta}{Z}(\nu + 2Zc)$. This also gives $\nu_1 + \nu_2$ strictly greater than $-\sqrt{2}$ by some $\gamma(\epsilon, \delta) > 0$ since again the equality state is infeasible. Identically this also holds if $B = 0$. Thus, for sufficiently large $n$, we must have that if some constant proportion sub-interval of $\{x_a\}$ induces a positive proportion of edges, their average value converges to $x_1$. 

    By Claim \ref{clm:small-box} we have that for sufficiently large $n$ and some fixed $\eta$, $a \in n(\frac14, \frac14 + \eta)$ induce a positive proportion of edges. Thus, the average of $\frac{x_a}{x_1}$ over any non-trivial sub-interval of $n(0, \frac14 + \eta)$ converges to 1. Yet $x_1$ has all the back vertices as neighbours whereas for $a$ near $(\frac14 + \eta)n$, $d_a \le (\frac12 - \frac{\sqrt{2}}{4})n$ in the limit. Thus, the average of $\frac{x_c}{x_1}$ for $c \in n(\frac12, \frac12 + \frac{\sqrt{2}}{4})$ converges to 0. 
    
    However, we also have by the analogous version of Claim \ref{clm:small-box} for $c$, that for sufficiently large $n$ and $c \in n(\frac34 - \eta, \frac34)$, $d_c \ge \eta n$ where all edges satisfy $a \in n(0, \frac14 + \eta)$. Thus, for these $c$ we have $\mu_2 x_c \ge 2\sum_{a \le \eta n} x_a - A - B - C$ which implies $\nu_2 x_c \ge \frac{2}{n} \sum_{a \le \eta n} x_a + \nu_2 x_b$. Thus, we have $\frac{x_c}{x_1} \le \frac{2\eta}{\nu_2} + \frac{x_b}{x_1}$. With $c \to -\infty$, the last term converges to 0, and with the rest we get $\frac{x_c}{x_1} \le -2\sqrt{2} \eta$, contradicting convergence to 0. 

    Thus, we cannot have $\nu_1(M_n) + \nu_2(M_n) \to -\sqrt{2}$. In particular, there must exist $\varepsilon > 0$ such that $\nu_1(M) + \nu_2(M) > -\sqrt{2} + \varepsilon$ for all such $M = 2Q-J$. 
\end{proof}

We remark that whilst the equality cases for Type 1 and Type 2 when $\nu_1 + \nu_2 = -\sqrt{2}$ cannot simultaneously occur, they can occur separately. Indeed, for Type 1 if we have an edge iff $a \le (\frac12 - \frac{\sqrt{2}}{4})n$ or $c \ge (\frac12 + \frac{\sqrt{2}}{4})n$, then we get $\nu_1 = -\frac{\sqrt{2}}{2}$ and $\nu_2 \approx -0.6245$. Similarly, for Type 2 add an edge iff $a \le \frac{n}{2\sqrt{2}}$ and $c \ge n - \frac{n}{2\sqrt{2}}$, then we get $\nu_1 \approx -0.6245$ and $\nu_2 = -\frac{\sqrt{2}}{2}$. This parallel in spectra is explained by them deriving from the same $n/2$-regular graph, just taken at different rotations.

\begin{figure}[H]
    \centering
    \begin{tabular}{cc}
        \includegraphics[width = 0.25\textwidth]{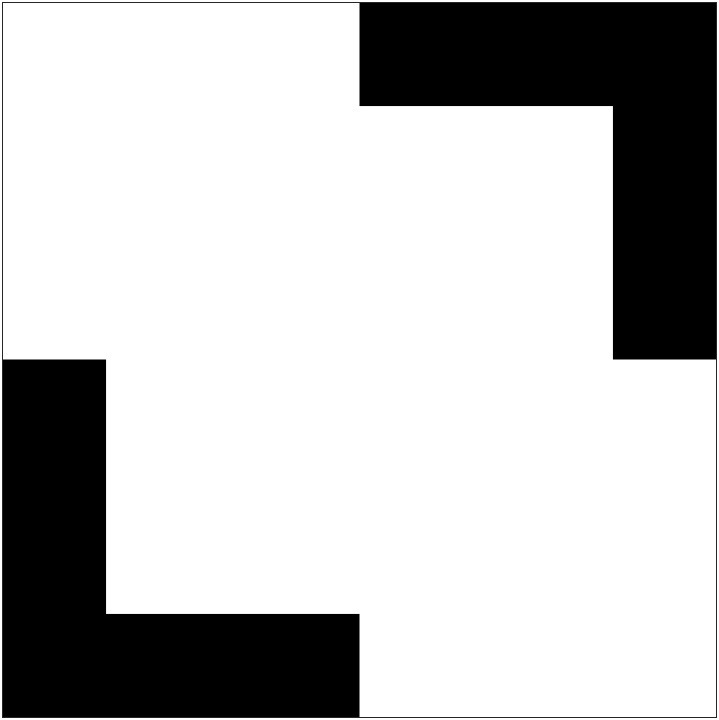} & \includegraphics[width = 0.25\textwidth]{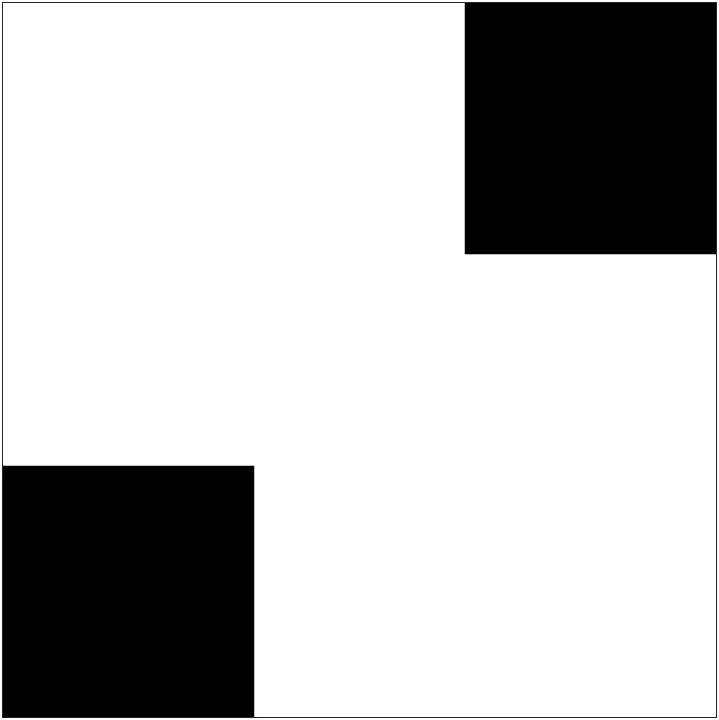}
    \end{tabular}
    \caption{Type 1 (left) and Type 2 (right) equality cases for $M$.}
    \label{fig:equality}
\end{figure}

\section{Proof of Theorem \ref{thm:worst-sum}}\label{section:proof}

We disprove the worst-case scenario by showing it is close to being an $n/2$-regular invariant graph in every aspect. 

Suppose FTSOC we have a sequence of graphs $G_n$ where $\frac{\lambda_{n-1}(G_n) + \lambda_n(G_n)}{|V(G_n)|} \to -\frac{\sqrt{2}}{2}$ as $n \to \infty$. By appealing to the techniques of Nikiforov \cite{nikiforov2006combinations}, i.e. by taking closed blow-ups and adding $o(n)$ isolated vertices if necessary, assume that $|V(G_n)| = n$. Thus, we have $\lambda_{n-1}(G_n) + \lambda_n(G_n) = -\frac{\sqrt{2}}{2}n + o(n)$. 

\begin{clm}\label{clm:wlog-invariant}
    WLOG we have $G_n = G_n^*$ for every $n$.
\end{clm}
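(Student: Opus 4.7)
The plan is to apply Corollary \ref{cor:worst-invariant} directly, using it to replace the sequence $(G_n)$ by a sequence of graphs that are $^*$-invariant while preserving the limit $\frac{\lambda_{n-1} + \lambda_n}{n} \to -\frac{\sqrt{2}}{2}$. For each $n$, the corollary produces a graph $H_n$ of order $n$ with $H_n = H_n^*$ and with $\lambda_{n-1}(H_n) + \lambda_n(H_n)$ equal to the minimum of this quantity over all graphs of order $n$. I will argue that $(H_n)$ is a legitimate substitute for $(G_n)$.

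First, because $H_n$ is a minimiser on $n$ vertices and $G_n$ has $n$ vertices,
\[\lambda_{n-1}(H_n) + \lambda_n(H_n) \;\le\; \lambda_{n-1}(G_n) + \lambda_n(G_n) = -\tfrac{\sqrt{2}}{2}n + o(n).\]
Next, I will invoke the existing AM-QM lower bound $\lambda_{n-1} + \lambda_n \ge -\tfrac{\sqrt{2}}{2}n$ stated in the introduction, which gives
\[\lambda_{n-1}(H_n) + \lambda_n(H_n) \;\ge\; -\tfrac{\sqrt{2}}{2}n.\]
Sandwiching the two estimates forces $\frac{\lambda_{n-1}(H_n) + \lambda_n(H_n)}{n} \to -\tfrac{\sqrt{2}}{2}$, so $(H_n)$ realises the same asymptotic hypothesis that $(G_n)$ was assumed to satisfy, with the additional property $H_n = H_n^*$ for every $n$.

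This reduction requires no machinery beyond Corollary \ref{cor:worst-invariant} and the AM-QM bound, so there is no genuine obstacle. The only point worth flagging is that the rest of Section \ref{section:proof} uses only the asymptotic behaviour of $\frac{\lambda_{n-1}+\lambda_n}{n}$ and no finer structural property of the original $G_n$, so passing from $(G_n)$ to $(H_n)$ leaves all subsequent arguments intact. Once this claim is in place, one may henceforth write $G_n = G_n^*$ without loss of generality throughout the remainder of the proof.
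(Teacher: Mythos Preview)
Your proposal is correct and follows essentially the same approach as the paper: replace each $G_n$ by the minimiser $H_n$ on $n$ vertices provided by Corollary~\ref{cor:worst-invariant}, then sandwich using the minimiser property on one side and the known lower bound $\lambda_{n-1}+\lambda_n \ge -\tfrac{\sqrt{2}}{2}n$ on the other. The paper attributes the lower bound to Nikiforov while you cite the AM-QM argument from the introduction, but these are the same bound, so the arguments coincide.
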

\begin{proof}
    Consider $G$ given by Corollary \ref{cor:worst-invariant}. By Nikiforov \cite{nikiforov2015extrema}, $\lambda_{n-1}(G_n) + \lambda_n(G_n) = -\frac{\sqrt{2}}{2}n + o(n)$ is the equality case for the upper bound, hence we must also have $\lambda_{n-1}(G) + \lambda_n(G) = -\frac{\sqrt{2}}{2}n + o(n)$ for a smaller proportion of $\lambda_{n-1} + \lambda_n$. Thus, we can WLOG replace $G_n$ with $G$. 
\end{proof}

Given invariance, we now prove that $G_n$ is close to having the structure of an $n/2$-regular invariant graph, analogous to Theorem \ref{thm:invariant-symmetry}. Implicitly assume $G = G_n$ with $n$ vertices.

\begin{clm}\label{clm:close-symmetry}
    Given any rotation of the labelling in Theorem \ref{thm:operation-run}, we can add/remove $o(n^2)$ edges from $G$ to obtain a $n/2$-regular $K$ where
    \[A(K) = \begin{pmatrix}
        Q & J_{n/2} - Q \\
        J_{n/2} - Q & Q
    \end{pmatrix},\]
    the complement of $Q$ is in a perfect elimination ordering, $Q$ is bipartite between the front and back with a non-empty middle, and $\lambda_{n-1}(K) + \lambda_n(K) = -\frac{\sqrt{2}}{2} n + o(n)$. 
\end{clm}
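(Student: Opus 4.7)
The plan is to build $K$ as an $o(n^2)$-edge perturbation of $G$ whose block form is read off from the near-tightness of the inequalities used to prove $\lambda_{n-1}+\lambda_n \ge -\frac{\sqrt{2}}{2}n$. The enabling soft fact is Weyl's inequality: any symmetric modification matrix $E$ with at most $o(n^2)$ nonzero $\{-1, +1\}$-entries has $\|E\|_{\mathrm{op}} \le \|E\|_F = o(n)$, so every eigenvalue shifts by $o(n)$ under such an edit. It therefore suffices to produce $K$ with the prescribed structure and $|E(G) \triangle E(K)| = o(n^2)$.

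I would first extract quantitative near-regularity. Let $u_1, \ldots, u_n$ be an orthonormal eigenbasis of $A(G)$ with $u_1$ the Perron eigenvector, and expand $\mathbf{1} = \sum_i \alpha_i u_i$. Combining $(\lambda_{n-1}+\lambda_n)^2 \le 2(\lambda_{n-1}^2+\lambda_n^2) \le \frac{n^2}{2}$ with $\sum_i \lambda_i^2 = 2e(G)$ and $\lambda_1 \ge 2e(G)/n$, the hypothesis forces $\lambda_1 = \frac{n}{2}+o(n)$, $\lambda_{n-1}^2+\lambda_n^2 = \frac{n^2}{4}+o(n^2)$, $e(G) = \frac{n^2}{4}+o(n^2)$, and $\sum_{i \notin \{1, n-1, n\}} \lambda_i^2 = o(n^2)$. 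Computing $\mathbf{1}^T A \mathbf{1} = 2e(G) = \sum_i \alpha_i^2 \lambda_i$ and bounding the middle-spectrum contribution by Cauchy--Schwarz gives $\alpha_1^2 = n - o(n)$ and $\alpha_{n-1}^2 + \alpha_n^2 = o(n)$. Then $\sum_i d_i^2 = \mathbf{1}^T A^2 \mathbf{1} = \sum_i \alpha_i^2 \lambda_i^2 = (2e)^2/n + o(n^3)$, which yields $\sum_i (d_i - n/2)^2 = o(n^3)$ and hence $\sum_i |d_i - n/2| = o(n^2)$. Simultaneously $|\sum_i x_i| = |\alpha_{n-1}| = o(\sqrt{n})$ (and analogously for $y_i$), so the $\mathbb{R}^2$ vectors $v_i = (x_i, y_i)$ from Theorem~\ref{thm:operation-run} satisfy $|\sum_i v_i| = o(\sqrt{n})$ while $\sum_i v_i v_i^T = I_2$.

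Next I would fix the canonical rotation and define $K$. By a discrete mean-value argument, pick a direction $\theta_0$ such that exactly $n/2$ of the $v_i$ lie in $[\theta_0, \theta_0+\pi)$ and relabel these as $1, \ldots, n/2$, preserving the anti-clockwise order. Let $Q$ be the restriction of $A(G)$ to $[1, n/2]\times[1, n/2]$ and set $A(K) = \begin{pmatrix} Q & J-Q \\ J-Q & Q \end{pmatrix}$. Then $K$ is automatically $n/2$-regular, and the consecutive-run structure from Theorem~\ref{thm:operation-run} restricted to the first half delivers both the PEO of the complement of $Q$ and the front/middle/back bipartite decomposition by the arguments in the proofs of Theorems~\ref{thm:invariant-symmetry} and~\ref{thm:invariant-bipartite}; a microscopic adjustment of $\theta_0$ (at a cost of $O(1)$ edges) forces the middle to be non-empty.

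The main obstacle is controlling the Hamming distance $|E(G) \triangle E(K)| = o(n^2)$. The diagonal blocks agree by construction, so disagreements lie only in the off-diagonal block, where $A(G)$ encodes the rule $v_i \cdot v_{j+n/2} < 0$ whereas $J-Q$ encodes $v_i \cdot v_j > 0$; these agree precisely when $v_{j+n/2} = -v_j$. The heart of the argument will be to combine $|\sum_i v_i| = o(\sqrt{n})$ with $\sum_i v_i v_i^T = I_2$ to show that sorting each half by angle and pairing positionally yields a matching whose cumulative angular discrepancy from the perfect antipodal pairing is $o(n)$. Since the rule $v_i \cdot w < 0$ only flips when $w$ crosses the line orthogonal to $v_i$, this converts into $o(n^2)$ disagreements in the off-diagonal block, and the bottom-left is handled symmetrically. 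The opening Weyl bound then delivers $\lambda_{n-1}(K) + \lambda_n(K) = -\frac{\sqrt{2}}{2}n + o(n)$.
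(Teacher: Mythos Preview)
Your high-level strategy (make $o(n^2)$ edge edits, then invoke Weyl) matches the paper, and your near-regularity extraction is fine --- the paper cites Nikiforov for $\sum_i |d_i - n/2| = o(n^2)$ whereas you give a direct spectral computation, but both arrive at the same conclusion. The core step, however, bounding $|E(G)\triangle E(K)|$, has two genuine gaps.

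First, you assert that ``the diagonal blocks agree by construction,'' but only the top-left block does: the bottom-right block of $A(G)$ is the restriction to $\{n/2+1,\dots,n\}^2$, which has no a~priori reason to equal $Q$. This block also needs an $o(n^2)$-edit argument.

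Second, and more seriously, your route through ``cumulative angular discrepancy $o(n)$'' is not established, and the two inputs you name --- $|\sum_i v_i| = o(\sqrt{n})$ and $\sum_i v_i v_i^T = I_2$ --- are by themselves insufficient. Take $n/3$ vectors at angle $0$ with magnitude $1/\sqrt{n}$, then $n/6$ at $\pi/2$ with magnitude $2/\sqrt{n}$, then $n/6$ at $\pi$ with magnitude $2/\sqrt{n}$, then $n/3$ at $3\pi/2$ with magnitude $1/\sqrt{n}$. Both moment conditions hold exactly, yet the positional pairing between the two half-planes misses antipodality by $\pi/2$ on a block of size $n/6$, and the off-diagonal disagreement between the rule $v_i\cdot v_{j+n/2}<0$ and the rule $v_i\cdot v_j\ge 0$ is $\Theta(n^2)$. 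So something beyond the two moments is required, and you have not indicated what.

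The paper sidesteps both issues by never touching angles at all. It uses your own intermediate conclusion $\sum_i|d_i-n/2|=o(n^2)$ directly together with the consecutive-run structure of Theorem~\ref{thm:operation-run}: for each $i\le n/2$ the neighbourhood of $i$ is an arc, so once its intersection with $\{1,\dots,n/2\}$ (row $i$ of $Q$) is fixed, its intersection with $\{n/2+1,\dots,n\}$ is determined up to exactly $|d_i-n/2|$ errors; summing gives $o(n^2)$ edits to install $J-Q$ in the top-right. The bottom-right block is then handled by the same degree-and-run count applied to vertices $n/2+1,\dots,n$, whose degrees have only been perturbed by $o(n^2)$ in total by the earlier edits. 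This is a purely combinatorial bookkeeping argument; your vector-geometric route would need substantially more input to close.
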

\begin{proof}
    We have $\lambda_{n-1}^2 + \lambda_n^2 \ge \frac{n^2}{4} + o(n^2)$ by AM-QM, whence $2e(G) - \lambda_1^2 \ge \frac{n^2}4 + o(n^2)$. But $\lambda_1 \ge \frac{2e(G)}{n}$, hence by the equality case of AM-GM, $\lambda_1 = \frac{n}{2} + o(n)$ with additionally $e(G) = \frac{n^2}{4} + o(n^2)$. Now $\lambda_1 - \frac{2e(G)}{n} = o(n)$ hence by the bounds of Nikiforov \cite{nikiforov2007nordhaus}, $s(G) = \sum_{i = 1}^n |d(i) - \frac{2e(G)}{n}| = o(n^2)$. In particular, $\frac{2e(G)}{n} = \frac{n}{2} + o(n)$ hence $\sum_{i = 1}^n |d(i) - \frac{n}{2}| = o(n^2) + n \cdot o(n) = o(n^2)$. 

    Suppose the top-left $\frac{n}{2} \times \frac{n}{2}$ sub-matrix of $A(G)$ is $P$. Since $G = G^*$, it has the structure in Theorem \ref{thm:operation-run}. Thus we can argue identically to Theorem \ref{thm:invariant-symmetry} to show $P$ already satisfies the desired properties of $Q$, up to the possibility of front vertices having a run contained strictly within $\{1, \dots, n/2\}$ (and symmetrically, back vertices having a run contained strictly within $\{n/2+1, \dots, n\}$). However, these result in $d(i) < n/2$ and hence take up at most $o(n^2)$ non-edges in the top-right and bottom-left corners since $\sum_{i = 1}^{n/2} |d(i) - \frac{n}{2}| = o(n^2)$. We can add in these $o(n^2)$ edges to $P$, and can also empty out a row in between the front and back with $O(n) = o(n^2)$ edge removals to produce the desired $Q$. Furthermore, replacing the top-left sub-matrix of $G$ with $Q$ is $o(n^2)$ edge additions/removals, and by Weyl's inequality, only affects the spectrum up to $o(n)$. We also still have $\sum_{i = 1}^{n} |d(i) - \frac{n}{2}| = o(n^2)$.

    Now consider replacing the top-right and bottom-left sub-matrices with $J_{n/2} - Q$. Since $\sum_{i = 1}^{n/2} |d(i) - \frac{n}{2}| = o(n^2)$ and the neighbourhoods of each vertex were previously also consecutive runs (which other than $o(n^2)$ non-edges, they go over vertex $n/2$), this is fixing every $d(i) = n/2$ in the first $n/2$ vertices and hence involves only $o(n^2)$ edge additions/removals.

    Finally, note that we have made only $o(n^2)$ changes to the neighbourhoods of the last $n/2$ vertices. Thus, we still have $\sum_{i = n/2+1}^{n} |d(i) - \frac{n}{2}| = o(n^2)$ and hence by adjusting each run to complete a $n/2$-run, i.e. replacing the bottom-right sub-matrix with $Q$, this is only another $o(n^2)$ edge additions/removals. Our spectrum is the same up to $o(n)$, and in particular $\lambda_{n-1}(K) + \lambda_n(K) = -\frac{\sqrt{2}}{2} n + o(n)$. 
\end{proof}

We then prove the analogous result to Type 1 and Type 2 eigenvectors. 

\begin{clm}\label{clm:close-type}
    There exists a canonical rotation where the first $n/2$ components of $x, y$ give $x'$ of Type 2 and $y'$ of Type 1. 
\end{clm}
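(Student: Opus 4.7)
The plan is to adapt the proof of Theorem \ref{thm:2Q-J}, using the $n/2$-regular approximation $K$ from Claim \ref{clm:close-symmetry} as a reference, and invoking spectral perturbation to transfer the structure back to $G_n$.

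The first step is to establish a spectral gap for $G_n$ so that perturbation arguments apply. Since $\sum_i \lambda_i^2 = 2e(G_n) = \frac{n^2}{2} + o(n^2)$, $\lambda_1 = \frac{n}{2} + o(n)$ (from the near-regularity exploited in Claim \ref{clm:close-symmetry}), and $\lambda_{n-1}^2 + \lambda_n^2 \ge \frac{n^2}{4} + o(n^2)$ by AM-QM, one deduces $\sum_{i=2}^{n-2} \lambda_i^2 = o(n^2)$; in particular $|\lambda_{n-2}(G_n)| = o(n)$. Combined with $\|A(G_n) - A(K)\|_F = o(n)$ from Claim \ref{clm:close-symmetry}, the Davis--Kahan theorem applied to the 2D subspace for $\{\lambda_{n-1}, \lambda_n\}$ yields that this subspace for $G_n$ is $o(1)$-close (in principal angle) to that of $K$. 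Thus we can pick orthonormal eigenvectors $x, y$ of $G_n$ that approximate corresponding eigenvectors $x^K, y^K$ of $K$, to which Theorems \ref{thm:invariant-bipartite} and \ref{thm:2Q-J} apply directly, giving the desired Type 2 and Type 1 patterns on the top half of $K$.

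Next, import the canonical rotation and labeling of $K$ to $G_n$. Since $G_n = G_n^*$, Theorem \ref{thm:operation-run} yields an anti-clockwise ordering of $G_n$'s own geometric vectors $v_i = (x_i, y_i)$; by the closeness of eigenvectors this ordering agrees with the one on $K$ at all but $o(n)$ positions, so an $o(n)$ cyclic shift aligns the two. The first $n/2$ vertices of $G_n$ then have $v_i$ in the upper half-plane up to $o(n)$ exceptions, which immediately gives the sign conditions: $y_i \ge 0$ throughout (for Type 1) and $x_i > 0$ in the front, $x_i \le 0$ in the back (for Type 2). Monotonicity then follows by re-running the argument in the proof of Theorem \ref{thm:2Q-J}: going from $v_i$ to $v_{i+1}$, the neighborhood changes by vertices whose vectors lie near angles $\theta_i \pm \pi/2$, and the approximate symmetry $v_j \approx -v_{j+n/2}$ implied by $\sum_i |d(i) - \frac{n}{2}| = o(n^2)$ forces these ``swap'' vectors to have the correct signed components, yielding that $x_i$ is decreasing across $[n/2]$ and $y_i$ is unimodal with peak at the middle.

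The main obstacle is closing the gap between ``approximate'' and ``exact'' Type 1 / Type 2 structure, since monotonicity may fail at up to $o(n)$ positions where $G_n$ genuinely deviates from $K$. This is likely the most delicate step, and can probably be handled either by a local adjustment of the canonical rotation that absorbs the defects near the boundaries of the front, middle, and back regions, or by weakening the claim to an approximate version that still suffices for the subsequent arguments in the proof of Theorem \ref{thm:worst-sum}, analogous to how the claims in Section \ref{section:n/2-regular} were needed only up to $o(n)$ error to derive the eventual contradiction.
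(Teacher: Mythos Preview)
Your route through the approximating graph $K$ and Davis--Kahan is a detour that creates the very gap you flag at the end. The paper's proof is far more direct: it never touches $K$ or any perturbation argument, and instead exploits only the \emph{exact} invariance $G_n = G_n^*$ secured in Claim~\ref{clm:wlog-invariant}. Because of this, the vectors $v_i = (x_i, y_i)$ satisfy $i \sim j \iff v_i \cdot v_j < 0$ exactly, so the geometric reasoning of Theorem~\ref{thm:2Q-J} carries over with one small modification: moving from $v_i$ to $v_{i+1}$, the neighbourhood may now change by several vertices rather than a single pair $\{v_j, -v_j\}$, but each such $v_k$ (with appropriate sign) still lies in the thin wedge between the perpendiculars to $v_i$ and $v_{i+1}$, so the sum $\lambda(x_{i+1} - x_i) = \sum \pm x_k$ has the correct sign. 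Monotonicity within each quadrant is therefore exact, not approximate. Then one simply takes the half-plane $[0,\pi)$ (WLOG containing at least $n/2$ vectors) and restricts to the first $n/2$ of them going anti-clockwise.

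The defect in your approach is that you are deriving monotonicity from proximity to $K$, which is indeed only approximate, when it should be derived from $G_n = G_n^*$, which is exact. Your final paragraph correctly identifies that your argument as written does not prove the claim as stated; the ``local adjustment'' fix you propose is unnecessary once you use invariance directly, and the ``weaken the claim'' escape hatch would require reworking the downstream Claim~4.4 to tolerate monotonicity defects, which the paper does not need to do.
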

\begin{proof}
    We proceed similarly to the proof of Theorem \ref{thm:2Q-J}. Indeed, we instead have the more general $\lambda_{n-1} x_{i+1} - \lambda_{n-1} x_i = \sum \pm x_k$, but the $\pm x_k$ lie between the vectors orthogonal to $x_i, x_{i+1}$ by definition, hence so must $\sum \pm x_k$. Thus, vector coordinates are still monotone within a quadrant. 

    Consider the span of vectors with arguments $[0, \pi)$ and $[\pi, 2\pi)$. One of these contains at least $n/2$ vectors, WLOG $[0, \pi)$. The restriction of $x, y$ to the first $n/2$ vectors anti-clockwise from the positive $x$-axis again yields Type 2 $x'$ and Type 1 $y'$.
\end{proof}

This proves that $G$ is close enough to apply Theorem \ref{thm:n/2-regular} up to some controlled error, which is sufficient to conclude. 

\begin{clm}
    Let $x$ be an eigenvector of $G$ corresponding to $\lambda_{n-1}$ or $\lambda_n$. Then the equations in Stage 1 of Theorem \ref{thm:almost-n/2-regular} hold with $x'$ and $\mu = -\frac{1}{2\sqrt{2}}n + o(n)$. 
\end{clm}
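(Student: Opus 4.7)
The plan is to verify that the sum-form identities derived in Stage 1 of Theorem \ref{thm:almost-n/2-regular} hold on $x' = x_{\mathrm{top}}$, the restriction of $x$ to the first $n/2$ coordinates, up to additive error $o(n^{3/2})$; after normalising by $n^{3/2}$ this yields the Stage 1 normalised equations $-\nu = Xa + Y + Zc$ (and its companions) with $o(1)$ error. Invoke Claim \ref{clm:close-symmetry} to obtain the $n/2$-regular invariant approximant $K$ of $G_n$, so $A(G_n) = A(K) + E$ with $\|E\|_F = o(n)$. Weyl's inequality gives $\mu = -\frac{n}{2\sqrt{2}} + o(n)$ for $\mu \in \{\lambda_{n-1}(G_n), \lambda_n(G_n)\}$, and we normalise $\|x\|_2 = 1$.

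The key preparatory step is to establish the approximate top-bottom antisymmetry $\|x_{\mathrm{top}} + x_{\mathrm{bot}}\|_2 = o(1)$. In the exactly invariant $K$, the partition $N_K(i) \sqcup N_K(i+n/2) = \{1, \dots, n\}$ yields $\mu(x_i + x_{i+n/2}) = \langle x, \mathbf{1}\rangle$ per vertex; transferring this identity to $G_n$ picks up a vector error of $L^2$-norm at most $\|Ex\|_2 \le \|E\|_{\mathrm{op}} = o(n)$. The inner product $\langle x, \mathbf{1}\rangle$ itself is $o(\sqrt{n})$: since $\sum_{i=2}^{n-2}\lambda_i(G_n)^2 = 2e(G_n) - \lambda_1^2 - \lambda_{n-1}^2 - \lambda_n^2 = o(n^2)$, a standard Rayleigh-quotient comparison places $v_1(G_n)$ within $L^2$-distance $o(1)$ of $\mathbf{1}/\sqrt{n}$, so orthogonality $x \perp v_1(G_n)$ gives the bound. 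Dividing by $|\mu| = \Theta(n)$ yields the claimed antisymmetry.

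Now re-derive the Stage 1 identities. Summing $\mu x_a = \sum_{j \sim_{G_n} a} x_j$ over front vertices and decomposing via $A(G_n) = A(K) + E$, the $A(K)$-contribution, after substituting $x_{j+n/2} = -x_j + (x_{j+n/2} + x_j)$, produces the expected Stage 1 expression $-k(A' + B' + C') + 2 \sum_{c > k+l} d_c x_c$. The antisymmetry-substitution error is bounded via Cauchy-Schwarz by $\|x_{\mathrm{top}} + x_{\mathrm{bot}}\|_2 \cdot O(n^{3/2}) = o(n^{3/2})$, since the vertex-count coefficients arising in the substitution form a vector of $\ell^2$-norm $O(n^{3/2})$. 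The $E$-contribution is at most $\|x\|_2 \|y\|_2 = o(n^{3/2})$ for $y_j = \sum_{a \le k} E_{aj}$, using $\|y\|_1 \le \sum_{i,j}|E_{ij}| = o(n^2)$ and $\|y\|_\infty \le k$. Analogous sums over middle and back vertices yield $\mu B' = -l(A' + B' + C') + o(n^{3/2})$ and the back analogue. Setting $a = A' l / (k B')$, $c = C' l / ((n-k-l) B')$ and dividing by $n^{3/2}$ then gives the Stage 1 normalised equations with $o(1)$ error.

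The main obstacle is the antisymmetry step, which must combine the operator-norm closeness of $A(G_n)$ to $A(K)$ with the near-regularity of $G_n$ forcing $v_1(G_n) \approx \mathbf{1}/\sqrt{n}$; once $\|x_{\mathrm{top}} + x_{\mathrm{bot}}\|_2 = o(1)$ is secured, the subsequent error tracking is routine Cauchy-Schwarz.
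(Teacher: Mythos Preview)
Your proposal is correct and takes a genuinely different route from the paper. The paper argues that $x$ has $o(1)$ component (in $\ell^2$) outside the span of the last two $K$-eigenvectors $\binom{1}{-1}\otimes z_{n-1},\binom{1}{-1}\otimes z_n$, using the spectral gap $\lambda_{n-2}(K)\ge -\frac{n}{2\sqrt{3}}$ (in fact $o(n)$) and a Rayleigh-quotient comparison; it then invokes linearity of the Stage~1 identities in the eigenvector to transfer them from $z_{n-1},z_n$ to $x'$. You instead establish the weaker but sufficient antisymmetry $\|x_{\mathrm{top}}+x_{\mathrm{bot}}\|_2=o(1)$ directly from the block structure of $K$ together with $v_1(G_n)\approx\mathbf{1}/\sqrt{n}$ (itself a consequence of $\sum_{i=2}^{n-2}\lambda_i^2=o(n^2)$), and then re-run the Stage~1 derivation with explicit Cauchy--Schwarz error tracking through $E=A(G_n)-A(K)$. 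Your approach is more hands-on and avoids any appeal to the $c_{-k}$ bounds for the gap, at the cost of being more computational; the paper's approach is cleaner and yields the stronger conclusion that $x'$ is close to $\mathrm{span}(z_{n-1},z_n)$, though that extra information is not used downstream. One small remark: your write-up only makes the summed (front/middle/back) identities explicit, whereas Stage~1 also records the per-vertex identities $\mu x_b=-(A+B+C)$ and $\mu x_1=-A-B+C$ (the latter needed for the Extrema inequality); your method handles these too with per-entry error $|(Ex)_i|+O(\sqrt{n})\|x_{\mathrm{top}}+x_{\mathrm{bot}}\|_2$, but it would be worth stating this, since the summed bound $o(n^{3/2})$ alone is not what controls the per-vertex equations.
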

\begin{proof}
    Let $\sim$ denote equal up to $o(n)$. Again by AM-QM and the bounds of Nikiforov \cite{nikiforov2015extrema}, $\lambda_{n-1} + \lambda_n \sim -\frac{\sqrt{2}}{2}n$ which gives $\lambda_{n-1}, \lambda_n \sim -\frac{1}{2\sqrt{2}}n$. Hence, by Claim \ref{clm:close-symmetry}, $G$ is $o(n^2)$ edges away from $K$ with $\lambda_{n-1}(K), \lambda_n(K) \sim -\frac{1}{2\sqrt{2}}n$. Furthermore, the last eigenvectors of $K$ look like $\binom{1}{-1} \otimes z$ for a $n/2$-vector $z$, as the spectrum of $J$ is entirely non-negative hence the last eigenvalues come from $2Q-J$. 

    Suppose FTSOC that there exists $\epsilon > 0$ such that there are arbitrarily large $n$ where the unit eigenvector $x$ has at least an $\epsilon$ component orthogonal to the span of the last two eigenvectors $\binom{1}{-1} \otimes z_{n-1}, \binom{1}{-1} \otimes z_n$ of $K$, i.e. $x = \alpha \binom{1}{-1} \otimes z_{n-1} + \beta \binom{1}{-1} \otimes z_n + \varepsilon w$ for $\{\binom{1}{-1} \otimes z_{n-1}, \binom{1}{-1} \otimes z_n, w\}$ orthonormal, $\alpha^2 + \beta^2 + \varepsilon^2 = 1$ and $\varepsilon \ge \epsilon$. Noting that $\lambda_{n-3} \ge -\frac{1}{2\sqrt{3}}n$, we have $\frac{\langle x, Kx\rangle}{\langle x, x\rangle} \ge (\alpha^2 + \beta^2) (-\frac{n}{2\sqrt{2}} + o(n)) + \epsilon^2 (-\frac{n}{2\sqrt{3}}) > (-\frac{1}{2\sqrt{2}} + \delta(\epsilon))n + o(n)$ for some $\delta(\epsilon) > 0$, contradiction. Thus, $x$ has an $o(1)$ component not of the form $\binom{1}{-1} \otimes z$. 

    In particular, choosing the canonical rotation from Claim \ref{clm:close-type},
    \[-\frac{n}{2\sqrt{2}} \sim \frac{\langle x, A(G)x \rangle}{\langle x, x\rangle} \sim \frac{\langle x, Kx \rangle}{\langle x, x\rangle} \sim \frac{\langle \binom{1}{-1} \otimes x', K \binom{1}{-1} \otimes x' \rangle}{\langle \binom{1}{-1} \otimes x', \binom{1}{-1} \otimes x'\rangle} \sim \frac{\langle x', (2Q-J)x'\rangle}{\langle x', x'\rangle}\]
    up to $o(n)$, and similarly for $y'$. 

    Note that the equations in Theorem \ref{thm:almost-n/2-regular} hold exactly for $z_{n-1}$ and $z_n$ with $\mu = -\frac{1}{2\sqrt{2}}n + o(n)$, hence since $x'$ is a linear combination up to a $o(1)$ difference and the equations are linear in eigenvector components, they also hold for $x'$ with $\mu = -\frac{1}{2\sqrt{2}}n + o(n)$. 
\end{proof}

\begin{proof}[Proof of Theorem \ref{thm:worst-sum}]
    After normalising, we get $\nu + o(1) = \frac{\mu + o(n)}{n}$, and thus the equations for $\nu$ are satisfied up to $o(1)$. The inequalities derived from these equations follow from the Types of $x', y'$ recovered in Claim \ref{clm:close-type}. Thus, by Theorem \ref{thm:n/2-regular} there exists an $\varepsilon > 0$ such that $\frac{\lambda_{n-1} + \lambda_n}{n} > -\frac{\sqrt{2}}{2} + \varepsilon$, contradiction. 
\end{proof}

\section{Concluding remarks}

Having established a strengthened upper bound on $c_3$ and in light of Theorem \ref{thm:block}, this makes it more plausible that we indeed have $c_3 = \frac13$. In fact, we have the following conjecture as a strengthened version of the original, which we've verified for all graphs of order at most 9. Though there are possible variations, we believe this is the most natural formulation.

\begin{cnj}\label{cnj:sum}
    Given a graph $G$ of order $n$, we have $\lambda_{n-1} + \lambda_n \ge -\frac{2n}{3}$. 
\end{cnj}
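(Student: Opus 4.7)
The plan is to extend the framework developed for Theorem \ref{thm:worst-sum} to the tighter $-\frac{2n}{3}$ bound, with $K_{n/3,n/3,n/3}$ (when $3 \mid n$) as the conjectured extremal example. A direct calculation shows its spectrum is $\{\frac{2n}{3}, 0^{(n-3)}, (-\frac{n}{3})^{(2)}\}$, attaining equality; under Operation $^*$ the $\mathbb{R}^2$ vectors cluster with multiplicity $n/3$ at three equidistant directions forming an equilateral triangle, so the intended structural target is a three-cluster configuration.

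By Corollary \ref{cor:worst-invariant} reduce to $G = G^*$, giving $\omega(G) \le 3$ and $\chi(G) \le 4$. Supposing a near-extremal sequence $G_n$ with $\frac{\lambda_{n-1}(G_n) + \lambda_n(G_n)}{n} \to -\frac{2}{3}$, upgrade Claim \ref{clm:close-symmetry} with sharper targets: AM--QM gives $\lambda_{n-1}^2 + \lambda_n^2 \ge \frac{2n^2}{9} - o(n^2)$, and combined with $\sum \lambda_i^2 = 2e(G)$ and $\lambda_1 \ge 2e(G)/n$ the equality case of AM--GM forces $\lambda_1 = \frac{2n}{3} + o(n)$ and $e(G) = \frac{n^2}{3} + o(n^2)$. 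Nikiforov's irregularity bound \cite{nikiforov2007nordhaus} then shows $G$ is $\frac{2n}{3}$-regular up to $o(n^2)$ edge modifications, reducing to a truly $\frac{2n}{3}$-regular invariant graph.

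Then rework Theorems \ref{thm:invariant-symmetry} and \ref{thm:invariant-bipartite} in this regime. The constraint $\sum v_i = 0$ together with the near-equality $\lambda_{n-1} = \lambda_n + o(n)$ (which gives rotational freedom in the eigenbasis) should force the $\mathbb{R}^2$ vectors to cluster near three equidistant directions, canonically partitioning $V(G)$ into three fronts. The adjacency matrix then admits a $3 \times 3$ block-circulant decomposition replacing the $2 \times 2$ structure of Theorem \ref{thm:invariant-symmetry}. The feasibility analysis of Stages 2--5 is then reworked with three cluster-size proportions $X_1, X_2, X_3$ and pairwise edge-density parameters; the minimum of $\frac{\lambda_{n-1} + \lambda_n}{n}$ should be attained exactly at $X_1 = X_2 = X_3 = \frac{1}{3}$, corresponding to $K_{n/3,n/3,n/3}$.

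The main obstacle is that equality is attained by a concrete graph rather than only as a limit, so the averaging arguments of Claims \ref{clm:degree} and \ref{clm:small-box} no longer yield a contradiction at equality: the limiting structure they identify is actually realised. A complete characterisation of equality must be carried through every stage, since the intersection polynomials from Stage 4 will likely rise in degree and the casework in Stage 5 proliferates with the three-fold symmetry. A rigidity argument forcing equal cluster sizes and precisely $120^\circ$ separations must then be produced, e.g.\ by exploiting $\sum v_i = 0$ and the regularity constraint to eliminate alternate critical points; this is consistent with the brute-force verification that the conjecture holds for $n \le 9$ with equality only at $K_{n/3,n/3,n/3}$.
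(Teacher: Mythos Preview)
This statement is presented in the paper as a \emph{conjecture}, not a theorem; the paper offers no proof, only a brute-force verification for $n \le 9$. So there is nothing to compare your proposal against, and what you have written is a research outline rather than a proof.

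That said, the outline has a concrete error. You claim that from $\lambda_{n-1}^2 + \lambda_n^2 \ge \frac{2n^2}{9} - o(n^2)$ together with $\sum \lambda_i^2 = 2e(G)$ and $\lambda_1 \ge 2e(G)/n$, the equality case of AM--GM forces $\lambda_1 = \frac{2n}{3} + o(n)$. It does not. Chaining the inequalities gives $\lambda_1(n - \lambda_1) \ge \frac{2n^2}{9} - o(n^2)$, which only pins $\lambda_1$ to the interval $[\frac{n}{3}, \frac{2n}{3}] + o(n)$; the corresponding quadratic $x^2 - x + \frac{2}{9} \le 0$ has two distinct roots. The paper's argument in Claim \ref{clm:close-symmetry} worked precisely because at the $-\frac{\sqrt{2}}{2}$ threshold the analogous quadratic $x^2 - x + \frac{1}{4} \le 0$ has a double root, forcing $\lambda_1 = \frac{n}{2}$ uniquely. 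At $-\frac{2}{3}$ this rigidity is lost, so you cannot conclude $\frac{2n}{3}$-regularity by this route, and the whole reduction to a $3 \times 3$ block-circulant structure is unsupported.

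More fundamentally, the contradiction-by-limiting-sequence strategy of Section \ref{section:proof} cannot establish the conjecture even in principle. That strategy assumes a sequence with $\frac{\lambda_{n-1}+\lambda_n}{n} \to -\frac{\sqrt{2}}{2}$ and derives a contradiction, yielding a bound strictly above $-\frac{\sqrt{2}}{2}$. Here the sequence $K_{n/3,n/3,n/3}$ already has $\frac{\lambda_{n-1}+\lambda_n}{n} \to -\frac{2}{3}$, so no contradiction is available; you acknowledge this obstacle but offer no mechanism to replace it. A proof of the conjecture would need a genuinely different idea, one that handles exact equality for every $n$ rather than ruling out a limit.
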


We were not able to make use of the additional structure in Section \ref{section:structure} for a general $G = G^*$. Our attempts revolved around reducing to the pivalous graph, with intuition from Theorem \ref{thm:operation-regular}. Yet, for example, it is not true that replacing two copies of the same row with one copy always decreases $\frac{\lambda_{n-1} + \lambda_n}{n}$, nor that $Pi_k^{[t]}$ is minimal over all $G = G^*$ of order $kt$ and $k$ distinct rows. \\

Our methods in Section \ref{section:n/2-regular} leave room for improvement in Theorem \ref{thm:n/2-regular} with the lack of interaction between Type 1 and Type 2 bounds, leading to the $\nu_1 = \nu_2 = -\frac{\sqrt{2}}{2}$ situation when realistically only one equality can hold at once. However, this cannot then be adapted to improve Theorem \ref{thm:worst} by virtue of $n/2$-regularity up to $o(n)$ being forced by the algebraically worst case scenario. Furthermore, this also means that our proof doesn't provide any actual bounds on $c_3$ and instead eliminates the $c_3 = \frac{1}{2\sqrt{2}}$ case. \\

In addition, we comment on alternative proof directions for Theorem \ref{thm:almost-n/2-regular} and outline possible sketches, with analytic ideas. Firstly, it seems plausible that one can reduce the structure above to ensure feasibility but leave just enough for only one structure to be permissible. Then the sequence of worst-case graphs converges in cut metric, and as demonstrated by Borgs et al. \cite{borgs2008graphon}, the spectrum of the limit graphon is $\frac12, \frac{1}{2\sqrt{2}}, \frac{1}{2\sqrt{2}}, 0, 0, \dots$, but must have trace $\le 1$, contradiction. However, we found that demonstrating the structure cannot exist is simpler than fixing it down.

Alternatively, having deduced the structure of $Q$ in Theorem \ref{thm:invariant-symmetry}, one may also be able to adapt the analytic techniques of Terpai \cite{terpai2011analytic} to provide intuition for the block construction yielding minimal $\lambda_{n-1} + \lambda_n$. Indeed, both operations of stretching and averaging components in an interval fix Type 1 and Type 2 properties. Then we can run the same arguments with instead $x_0 \in (0, X+Y)$ and an interval neighbourhood $R = (x_1, x_2)$ around $x_0$ where the $\mathcal{L}^2$ average of eigenfunction $f$ on $R$ is $f(x_0)$. Letting $\tilde{f}$ be $f$ except $\tilde{f}(x) = f(x_0)$ on $R$, we can similarly prove that the change in the integral is $\tilde{I} - I = -(l(R) f(x_0) - \int_R f(x) dx)^2 \le 0$. Since both Type 1 and Type 2 are monotone on $(0, X+Y)$, we have that $W$ must be a step with at most two steps on $(0, X+Y)$, and identically for $(X, 1)$. 

The primary roadblock here is the lack of orthogonality. In fact, we have that if $f_1(x_0)$ and $f_2(x_0)$ are both $\mathcal{L}^2$ averages on $R$, then by Cauchy-Schwarz, 
\begin{align*}
    f_1(x_0) f_2(x_0) &= \left(\frac{1}{l(R)} \int_R f_1(x)^2 dx\right)^{1/2} \left(\frac{1}{l(R)} \int_R f_2(x)^2 dx\right)^{1/2} \\
    &\ge \frac{1}{l(R)} \int_R f_1(x) f_2(x) dx \\
    \Rightarrow \int_R \tilde{f_1}(x) \tilde{f_2}(x) dx &\ge \int_R f_1(x) f_2(x) dx
\end{align*}
with equality iff $f_1, f_2$ were linearly dependent on $R$. Yet this cannot happen in the front where Type 1 and Type 2 are oppositely oriented. \\

Finally, we haven't yet explored into the generalised operation in Subsection \ref{sub:generalisation}, but we hope it inspires some further research into the unveiled structure. Indeed, the strikingly different growth rates of the immediate bounds for $\omega$ and $\chi$ may be of interest. 

Note that with $k \ge 3$ dimensions, there is no natural selection of the vectors `in order', hence we envisage it will be difficult to pursue the same approach without the ability to restrict eigenvectors to controlled Types.

\section*{Acknowledgements}

This research was conducted shortly after the author was a student in the University of Cambridge Summer Research in Mathematics (SRIM) programme, during which the author worked with Giacomo Leonida under the supervision of Jan Petr. The author is grateful to Giacomo Leonida for the fantastic discussions during the programme, resulting in an initial joint paper \cite{leonida2024structure}, as well as to Jan Petr for the weekly helpful meet-ups, write-up advice and for introducing the problem. The author is also grateful to SmartSaas for funding their participation in the SRIM programme. 

\bibliographystyle{plain}
\bibliography{citations.bib}

\section*{Appendix}
\renewcommand{\thesection}{A}
\setcounter{thm}{0}

We deal with the block construction mentioned in the proof of Theorem \ref{thm:almost-n/2-regular} and prove that $C_6^{[t]}$ is optimal over all closed vertex multiplications of $C_6$ by $[a, b, c, a, b, c]$. 

\begin{thm}\label{thm:block}
    Suppose $G$ is the closed vertex multiplication of $C_6$ by $[a, b, c, a, b, c]$. Then $\lambda_2 + \lambda_3 \le \frac{2n}{3} - 2$, with equality iff $a = b = c$. 
\end{thm}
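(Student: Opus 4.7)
My plan is to exploit the equitable partition of $G$ induced by the six blocks $B_1, \dots, B_6$ of sizes $a, b, c, a, b, c$. A vector supported on a single $B_i$ with zero sum on $B_i$ is an eigenvector of $A(G)$ with eigenvalue $-1$, contributing $-1$ with multiplicity at least $n - 6$ to the spectrum; the remaining six eigenvalues are those of the $6 \times 6$ quotient matrix $Q$, and in particular $\lambda_1, \lambda_2, \lambda_3$ all appear in the spectrum of $Q$. The graph has a $\mathbb{Z}_2$ symmetry $i \mapsto i + 3 \pmod 6$, so $Q$ decomposes into $3 \times 3$ blocks $Q_+$ and $Q_-$ on the symmetric and antisymmetric parts of the quotient space respectively.

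Computing directly, $Q_+ = \mathbf{1}_3 (a, b, c) - I_3$ is a rank-one matrix minus identity and has spectrum $\{s - 1, -1, -1\}$ with $s = a + b + c = n/2$, which identifies $\lambda_1 = s - 1$. For the antisymmetric block, conjugate $Q_-$ by $\operatorname{diag}(\sqrt{a}, \sqrt{b}, \sqrt{c})$ to obtain a symmetric matrix $\tilde{Q}_-$, and set $M := \tilde{Q}_- + I$. The key observation is that $M_{ii} = a_i$ (writing $a_1, a_2, a_3 := a, b, c$) and $M_{ij}^2 = a_i a_j$ independently of the signs inherited from $Q_-$; thus every $2 \times 2$ principal minor $M_{ii} M_{jj} - M_{ij}^2$ of $M$ vanishes. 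Together with $\operatorname{tr}(M) = s$ and a direct cofactor calculation of $\det(M) = -4abc$, the characteristic polynomial of $M$ is exactly $f(\mu) := \mu^3 - s \mu^2 + 4abc$, so the eigenvalues of $Q_-$ are $\mu_i - 1$ for the three roots of $f$.

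Analyzing $f$, we have $f(0) = f(s) = 4abc > 0$ while $f(2s/3) = 4(abc - s^3/27) \le 0$ by AM-GM, which localizes the roots as $\mu_1 < 0 < \mu_2 \le 2s/3 \le \mu_3 < s$. This pins down $\lambda_2 = \mu_3 - 1$ and $\lambda_3 = \mu_2 - 1 > -1$ (so neither is absorbed by the $-1$-eigenspace), and Vieta gives
\[
\lambda_2 + \lambda_3 = \mu_2 + \mu_3 - 2 = s - \mu_1 - 2.
\]
The target bound $\lambda_2 + \lambda_3 \le 2n/3 - 2 = 4s/3 - 2$ is thus equivalent to $\mu_1 \ge -s/3$, and since $\mu_1$ is the smallest root of $f$, this in turn is equivalent to $f(-s/3) \le 0$. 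Direct computation gives $f(-s/3) = -4s^3/27 + 4abc = 4(abc - s^3/27)$, which is non-positive by AM-GM, with equality iff $a = b = c$.

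The main obstacle is producing the compact form for the characteristic polynomial of $Q_-$. The symmetrization and the algebraic identity $M_{ii} M_{jj} = M_{ij}^2$ on each pair are the key technical inputs; once these collapse all $2 \times 2$ principal minors of $M$, only the trace and determinant of $M$ matter, and then a single application of AM-GM on $abc \le s^3/27$ simultaneously yields both the inequality and the equality characterisation.
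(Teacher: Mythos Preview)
Your proof is correct and lands on the same cubic $f(\mu)=\mu^{3}-s\mu^{2}+4abc$ (with $s=a+b+c$) that the paper isolates, but both the derivation of the cubic and the root analysis differ in flavour. The paper adds self-loops, writes down the full $6\times 6$ divisor matrix, and factors its characteristic polynomial directly as $x^{2}(x-s)P(x)$; you instead split the quotient by the $\mathbb{Z}_2$ symmetry $i\mapsto i+3$ into two $3\times 3$ blocks and read off $f$ from the trace, the vanishing $2\times 2$ principal minors, and the determinant of the symmetrised antisymmetric block. For the root bound, the paper (after normalising $s=\tfrac12$) shows that if the largest root is $\tfrac13+t$ then $P(\tfrac13-t)=-2t^{3}\le 0$, forcing the second root to lie at or below $\tfrac13-t$ and hence the top two roots to sum to at most $\tfrac23$. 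You instead use Vieta to turn the bound on $\mu_{2}+\mu_{3}$ into $\mu_{1}\ge -s/3$ and verify this by a single evaluation $f(-s/3)=4\bigl(abc-(s/3)^{3}\bigr)\le 0$. Your route makes the dependence on AM--GM completely explicit and sidesteps the algebraic identity $P(\tfrac13-t)=-2t^{3}$; the paper's route avoids the symmetrisation and the minor computation. The two arguments are essentially equivalent in length and content.
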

\begin{proof}
    Add a self-loop at each vertex and take the divisor matrix for the equitable partition. 
    \[\begin{pmatrix}
        a & b & 0 & 0 & 0 & c \\
        a & b & c & 0 & 0 & 0 \\
        0 & b & c & a & 0 & 0 \\
        0 & 0 & c & a & b & 0 \\
        0 & 0 & 0 & a & b & c \\
        a & 0 & 0 & 0 & b & c
    \end{pmatrix}.\]
    
    This has characteristic polynomial $x^2(x-(a+b+c))P(x)$ where
    \[P(x) = x^3 - (a+b+c)x^2 + 4abc.\]
    
    Thus, the eigenvalues of $G$ are 0, $a+b+c$ which corresponds to the regularity, and the roots of $P(x) = 0$. Normalising $a + b + c = \frac12$, we get $x^3 - \frac12 x^2 + 4\alpha$ where $0 \le \alpha \le \frac1{216}$ due to AM-GM. Note that $P(1/3) = -\frac1{54} + 4\alpha \le 0$, hence it either lies between the first two roots or before the last root. Yet $P(0) = 4\alpha \ge 0$ hence the last root is non-positive. 

    If $\frac13 + t$ is a root for $P$ with $0 \le t \le \frac13$, then
    \begin{align*}
        0 &= \left(\frac{1}{27} + \frac13 t + t^2 + t^3\right) - \frac12 \left(\frac19 + \frac23 t + t^2\right) - 4\alpha \\ 
        \Rightarrow -\frac23 t - 2t^3 + \frac23 t &= \left(\frac{1}{27} - \frac13 t + t^2 - t^3\right) - \frac12 \left(\frac19 - \frac23 t + t^2\right) - 4\alpha \\
        \Rightarrow P\left(\frac13 - t\right) &= -2t^3 \le 0.
    \end{align*}
    
    Thus, $\frac13 - t$ lies between the first two roots. In particular, $\frac13 + t$ must be the first root, and the sum of the first two roots is at most $\frac23$, with equality iff $t = 0$ iff $\frac13$ is a root. This happens at $abc = \frac1{216}$ and hence $a = b = c = \frac16$ as the equality case for AM-GM. 

    Since we obtained the eigenvalues of $A(G) + I$ from $P$, we have $(\lambda_{n-1} + 1) + (\lambda_n + 1) \le \frac{2n}{3}$ with equality iff $a = b = c$. 
\end{proof}

\end{document}